 \theoremstyle{plain}
 \newtheorem{thm}{Theorem}[section]
 \newtheorem{lem}[thm]{Lemma}
 \newtheorem{prop}[thm]{Proposition}
 \newtheorem{cor}[thm]{Corollary}
 \theoremstyle{definition}
 \newtheorem{rem}[thm]{Remark}
 \newtheorem{defn}[thm]{Definition}
 \newtheorem{example}[thm]{Example}
\newcommand{\divides}{\mid}
\newcommand{\notdivides}{\nmid} %\nmid?
\newcommand{\map}[4][\to]{#2 \colon #3 #1 #4}
\DeclareMathOperator{\charac}{char}
\newcommand{\sm}{\mathrm{sm}}
\newcommand{\Het}{H_{\mathrm{\acute et}}}
\newcommand{\Hcrys}{H_{\mathrm{crys}}}
\newcommand{\Tcrys}{T_{\mathrm{crys}}}
\DeclareMathOperator{\ord}{ord}
\newcommand{\injto}{\hookrightarrow}
\newcommand{\actson}{\curvearrowright}
\newcommand{\isomto}{\stackrel{\sim}{\to}}
\newcommand{\isomfrom}{\stackrel{\sim}{\leftarrow}}
\newcommand{\idealm}{\mathfrak{m}}
\DeclareMathOperator{\Spec}{Spec}
\newcommand{\bZ}{\mathbb Z}
\newcommand{\bQ}{\mathbb Q}
\newcommand{\bF}{\mathbb F}
\newcommand{\bP}{\mathbb P}
\newcommand{\bG}{\mathbb G}
\DeclareMathOperator{\SL}{SL}
\DeclareMathOperator{\Bl}{Bl}
\newcommand{\set}[1]{\{#1\}}
\newcommand{\dual}[1]{#1^{\vee}}
\DeclareMathOperator{\Fix}{Fix}
\DeclareMathOperator{\Aut}{Aut}
\DeclareMathOperator{\Stab}{Stab}
\newcommand{\isolatedfix}[1]{\langle #1 \rangle}
\newcommand{\divisorialfix}[1]{(#1)}
\DeclareMathOperator{\Sing}{Sing}
\DeclareMathOperator{\Hom}{Hom}
\DeclareMathOperator{\sHom}{\mathcal{H}\mathit{om}}
\DeclareMathOperator{\pr}{pr}
\DeclareMathOperator{\wt}{wt}
\DeclareMathOperator{\divisor}{div}
\DeclareMathOperator{\Pic}{Pic}
\DeclareMathOperator{\disc}{disc}
\DeclareMathOperator{\Der}{Der}
\newcommand{\restrictedto}[1]{\rvert_{#1}}
\newcommand{\partialdd}[1]{\frac{\partial}{\partial #1}}
\newcommand{\red}{\mathrm{red}}
\newcommand{\thpower}[2]{{#1}^{(#2)}}
\newcommand{\pthpower}[1]{\thpower{#1}{p}}
\newcommand{\card}[1]{\lvert #1 \rvert}
\DeclareMathOperator{\rank}{rank}
\newcommand{\cO}{\mathcal{O}}
\newcommand{\cI}{\mathcal I}
\newcommand{\floor}[1]{\lfloor #1 \rfloor}
\newcommand{\positive}[1]{(#1)^{+}}
\newcommand{\abs}[1]{\lvert #1 \rvert}
\DeclareMathOperator{\Image}{Im}
\newcommand{\id}{\mathrm{id}}
\newcommand{\notfixed}{[\mathsf{n}]}
\newcommand{\another}{[*]}
\newcommand{\Scyc}{S_{\mathrm{cyc}}}
\newcommand{\Smu}{S_{\mu}}
\title
{On $\mu_{n}$-actions on K3 surfaces in positive characteristic}
\author{Yuya Matsumoto}
\address{Department of Mathematics, Faculty of Science and Technology, Tokyo University of Science, 2641 Yamazaki, Noda, Chiba, 278-8510, Japan}
\email{\url{matsumoto.yuya.m@gmail.com}}
\email{\url{matsumoto_yuya@ma.noda.tus.ac.jp}}
\date{2022/02/20}
\keywords{K3 surfaces, finite group schemes, $\mu_n$, symplectic actions}
\subjclass[2020]{14J28 (Primary) 14L15, 14L30 (Secondary)}
\thanks{This work was supported by JSPS KAKENHI Grant Numbers 15H05738 and 16K17560.}
\begin{document}

\begin{abstract}
In characteristic $0$,
symplectic automorphisms of K3 surfaces (i.e.\ automorphisms preserving the global $2$-form) and non-symplectic ones behave differently.
In this paper we consider the actions of the group schemes $\mu_{n}$
on K3 surfaces (possibly with rational double point singularities) in characteristic $p$, 
where $n$ may be divisible by $p$.
We introduce the notion of symplecticness of such actions,
and we show that symplectic $\mu_{n}$-actions 
have similar properties,
such as possible orders, fixed loci, and quotients,
to symplectic automorphisms of order $n$ in characteristic $0$.
We also study local $\mu_n$-actions on rational double points.
\end{abstract}

\maketitle

\section{Introduction}
K3 surfaces are proper smooth surfaces $X$ with $\Omega^2_X \cong \cO_X$ and $H^1(X, \cO_X) = 0$.
The first condition implies that $X$ admits an everywhere non-vanishing $2$-form, and such a $2$-form is unique up to scalar.
An automorphism of a K3 surface is called \emph{symplectic} if it preserves the global $2$-form.
It is known that symplectic and non-symplectic automorphisms behave very differently.

For example, 
Nikulin \cite{Nikulin:auto}*{Sections 4--5} proved that quotients of K3 surfaces in characteristic $0$ by a symplectic action of a finite group $G$  
has only rational double points (RDPs for short) as singularities and that the minimal resolutions of the quotients are again K3 surfaces.
Moreover he determined the number of fixed points (which are always isolated) if $G$ is cyclic.
To the contrary, the quotients by non-symplectic actions of finite groups are never birational to K3 surfaces;
instead they are birational to either Enriques surfaces or rational surfaces.

These results hold in characteristic $p > 0$ provided $p$ does not divide the order of $G$ (see Theorem \ref{thm:quotient:tame symplectic}),
but are no longer true for order $p$ automorphisms in characteristic $p$.
In this case the notion of symplecticness is useless, 
since any such automorphism is automatically symplectic (since there are no nontrivial $p$-th root of unity in characteristic $p$) 
and, for small $p$, there exist examples of automorphisms with non-K3 quotients 
(see \cite{Dolgachev--Keum:wild-p-cyclic},\cite{Dolgachev--Keum:auto}).

In this paper we consider actions of the finite group schemes $\mu_{n}$ ($n$ may be divisible by $p$) on 
RDP K3 surfaces, by which we mean surfaces with RDP singularities whose minimal resolutions are K3 surfaces,
It is essential to allow RDPs since smooth K3 surfaces never admit actions of $\mu_p$
(see Remark \ref{rem:no global derivation}).
We introduce the notion of symplecticness and fixed points
of such actions (Definitions \ref{def:symplectic} and \ref{def:fixedpoints}).
Then we prove the following properties, which are parallel to the properties of automorphisms of order not divisible by the characteristic.

\begin{thm}[Theorems \ref{thm:quotient:symplectic} and \ref{thm:quotient:non-symplectic}]
Let $X$ be an RDP K3 surface in characteristic $p$, equipped with a $\mu_{n}$-action.
If the action is symplectic, then the quotient $X/\mu_{n}$ is an RDP K3 surface. 
If $n = p$ and the action is non-symplectic, 
then the quotient $X/\mu_{p}$ is an RDP Enriques surface if the action is fixed-point-free (which is possible only if $p = 2$),
and otherwise it is a rational surface.
\end{thm}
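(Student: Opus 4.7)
The plan is to separate the cases according to how $\mu_n$ acts on the global $2$-form $\omega_X$, combining the global cohomology of $X$ with the local classification of $\mu_n$-actions on RDPs established earlier in the paper. A uniform soft input is that $\mu_n$ is linearly reductive, so the functor of $\mu_n$-invariants is exact and $\pi_*\cO_X$ splits as a direct sum over the characters of $\mu_n$ with $\cO_Y$ in the trivial summand, where $\pi \colon X \to Y := X/\mu_n$ is the quotient map. In particular $H^i(Y, \cO_Y) = H^i(X, \cO_X)^{\mu_n}$, and hence $H^1(\cO_Y) = 0$ in every case.

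In the symplectic case, by assumption a generator $\omega$ of $H^0(X^{\sm}, \Omega^2_{X^{\sm}})$ is $\mu_n$-invariant. Away from the images of fixed points and from the RDPs of $X$, the map $\pi$ is a $\mu_n$-torsor and $\omega$ descends to a nowhere-vanishing section of $\Omega^2_{Y^{\sm}}$. The remaining task is to show that the singular points of $Y$ are RDPs; this reduces to statements about quotients of smooth points and of RDPs by symplectic $\mu_n$-actions, which are exactly the local theorems established earlier in the paper. Once RDPs are known, $\omega$ extends to a trivialization of $\omega_Y$, and with $H^1(\cO_Y)=0$ this shows $Y$ is a K3 surface with RDPs.

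In the non-symplectic case with $n = p$, the character $\chi$ by which $\mu_p$ acts on $\omega_X$ is nontrivial, so $\omega_X$ does not descend but $\omega_X^{\otimes p}$ does. If the action is fixed-point-free, $\pi$ is a $\mu_p$-torsor, $Y$ is smooth, $\omega_Y^{\otimes p}\cong\cO_Y$, and $\omega_Y\not\cong\cO_Y$. An Euler-characteristic computation via the decomposition $\pi_*\cO_X=\bigoplus_{i=0}^{p-1}\omega_Y^{\otimes i}$ yields $\chi(\cO_X)=p\,\chi(\cO_Y)$, which forces $p=2$ and $\chi(\cO_Y)=1$; the Enriques--Kodaira classification then identifies $Y$ as an Enriques surface. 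If instead the action has a fixed point, one must show that a smooth projective model $\widetilde Y$ of $Y$ is rational. The strategy is to prove $H^0(\widetilde Y,\omega_{\widetilde Y}^{\otimes m})=0$ for every $m\geq 1$; combined with $H^1(\cO_{\widetilde Y})=0$ the classification of surfaces in positive characteristic then gives rationality. For $m$ not divisible by $p$ this vanishing follows because any $\mu_p$-invariant section of $\omega_X^{\otimes m}$ must vanish along the fixed locus, and for $m$ divisible by $p$ one uses the local description of non-symplectic $\mu_p$-fixed points on (smooth or RDP) surfaces: the induced singularities of $Y$ are strictly worse than canonical, so on the resolution the pullback of any invariant pluricanonical form picks up a pole along some exceptional component.

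The main obstacle will be the last step, showing the vanishing of all plurigenera of $\widetilde Y$ in the non-symplectic case with fixed points. This requires an honest computation of the discrepancies of the exceptional divisors of the resolution of the non-canonical quotient singularities produced by a non-symplectic $\mu_p$-action, and this is precisely where the earlier local study of $\mu_n$-actions on RDPs is indispensable. By comparison, the symplectic case reduces quickly to the local structure theorem (symplectic $\mu_n$-quotients produce RDPs), and the fixed-point-free non-symplectic case is governed by the character decomposition of $\pi_*\cO_X$ together with standard surface classification.
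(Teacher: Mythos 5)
Your treatment of the symplectic case and of the fixed-point-free non-symplectic case is essentially sound and close to the paper's, with two genuinely different touches that are worth noting. For the symplectic case the paper, after establishing that $Y$ is an RDP surface with $K_Y$ trivial, invokes the classification of surfaces and then distinguishes K3 from abelian by comparing $b_1$ through the purely inseparable map $X\to Y$; your route via $H^1(Y,\cO_Y)=H^1(X,\cO_X)_0=0$ (legitimate, since $\pi$ is affine and $\mu_n$ is linearly reductive, and RDPs are rational so $H^1(\cO_Y)=H^1(\cO_{\tilde Y})$) identifies $\tilde Y$ as K3 directly from the definition and is arguably cleaner. For the fixed-point-free non-symplectic case, your Riemann--Roch computation $\chi(\cO_X)=p\,\chi(\cO_Y)$ (valid because the eigensheaves of $\pi_*\cO_X$ are $p$-torsion, hence numerically trivial, invertible sheaves) gives $p=2$ and $\chi(\cO_Y)=1$ in one stroke and immediately rules out the (quasi-)hyperelliptic alternative; the paper instead gets $p=2$ from $2K_Y=0$ together with $H^0(Y^{\sm},nK_Y)=0$ for $0<n<p$, and rules out hyperelliptic by comparing $b_1$. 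One small but genuine error here: $Y$ is \emph{not} smooth in the fixed-point-free case --- a non-fixed $A_{mp-1}$ on $X$ descends to an $A_{m-1}$ on $Y$ (Table \ref{table:non-fixed RDP}) --- so you get an \emph{RDP} Enriques surface; your $\chi$ argument survives because the eigensheaves are still invertible ($\pi$ is a torsor) and RDPs are rational, but the claim as written is false.

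The real gap is in the non-symplectic case with fixed points. First, you conclude rationality from ``all plurigenera vanish plus $H^1(\cO_{\widetilde Y})=0$,'' but the invariance argument only gives $H^1(\cO_Y)=0$, and in this case $Y$ has non-RDP singularities, so $R^1f_*\cO_{\widetilde Y}$ need not vanish and $H^1(\cO_{\widetilde Y})$ may a priori exceed $H^1(\cO_Y)$ --- this is exactly the situation where that transfer breaks down. The paper avoids this by arguing with $b_1$: once $K_{\widetilde Y}$ is not pseudo-effective, $\widetilde Y$ is birationally ruled over some curve $C$, and $C$ must be rational because a K3 surface (having trivial Albanese) admits no dominant map to a curve of positive genus; you should replace your $h^1$ step by this. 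Second, your justification of $P_m=0$ for $p\nmid m$ (``invariant sections must vanish along the fixed locus'') is vacuous when the fixed locus is a finite set of points; the correct reason is simply that $\omega_X^{\otimes m}$ has nonzero weight $m i_0$, so the weight-zero part of the one-dimensional space $H^0(X^{\sm},(\Omega^2)^{\otimes m})$ is zero. Finally, the step you defer as ``the main obstacle'' --- the discrepancy computation for $p\divides m$ --- does not actually require any further computation: Theorem \ref{thm:mu_p RDP}(\ref{thm:mu_p RDP:fixed RDP non-symplectic}) already says the image of an isolated non-symplectic fixed point is a non-canonical singularity, and since the unique (up to scalar) section of $mK_Y$ on $Y^{\sm}$ is non-vanishing, it cannot extend regularly to $\widetilde Y$ precisely because the singularity is non-canonical; while at a divisorial fixed locus Proposition \ref{prop:2-forms} forces invariant sections of $(\Omega^2_X(\Delta))^{\otimes m}$ mapping into $(\Omega^2_Y)^{\otimes m}$ to vanish to order $m(p-1)$ along $\Delta$, which is impossible for a nonzero section on a K3. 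With these repairs your outline matches the paper's proof.
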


\begin{thm}[Theorems \ref{thm:symplectic mu_n} and \ref{thm:mu_n}]
\begin{itemize}
	\item There exists an RDP K3 surface $X$ in characteristic $p$ equipped with a $\mu_{p}$-action 
	if and only if $p \leq 19$.
	\item If $X$ is an RDP K3 surface $X$ in characteristic $p$ equipped with a $\mu_{n}$-action, then $\phi(n) \leq 20$, in particular $n \leq 66$.
	Moreover, for each $p$ we determine the set of $n$ for which such an action exists.
	\item For each $p$, there exists an RDP K3 surface $X$ in characteristic $p$ equipped with a symplectic $\mu_{n}$-action 
	if and only if $n \leq 8$,
	and we determine the number of fixed points.
%	and then there are exactly 
%	$(24/n) \prod_{l:\text{prime},l \divides n} (l/(l+1))$ 
%	fixed points (counted with suitable multiplicities).
\end{itemize}
\end{thm}

To prove the main results we first study (in Sections \ref{sec:tame} and \ref{sec:mu_p RDP}) $\mu_n$-actions on local rings of surfaces at smooth points and RDPs.
We define the notion of symplecticness of such actions (Definitions \ref{def:symplectic local tame} and \ref{def:symplectic local})
and prove the following result.
\begin{thm}[Theorem \ref{thm:mu_p RDP} and Propositions \ref{prop:non-fixed mu_p RDP}, \ref{prop:list of fixed symplectic RDP}]
Let $X$ be the localization  
at a closed point $w$ of an RDP surface in characteristic $p$ 
equipped with a $\mu_{p}$-action.
Let $\map{\pi}{X}{X/\mu_p}$ be the quotient morphism.
\begin{itemize}
\item If $w$ is not fixed by the action, 
then $\pi(w)$ is either a smooth point or an RDP.
\item If $w$ is fixed and the action is symplectic at $w$,
then $w$ is an isolated fixed point and $\pi(w)$ is an RDP.
\item If $w$ is an isolated fixed point and the action is non-symplectic at $w$,
then $\pi(w)$ is a non-RDP singularity.
\end{itemize}
We classify the possible actions in the non-fixed case (Table \ref{table:non-fixed RDP}) and the symplectic case (Table \ref{table:fixed symplectic RDP}).
\end{thm}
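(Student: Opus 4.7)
The plan is to reinterpret the $\mu_p$-action in terms of a $p$-closed derivation and then carry out local case analysis. In characteristic $p$, Cartier duality between $\mu_p$ and the constant group scheme $\bZ/p$ gives a bijection between $\mu_p$-actions on $A = \cO_{X,z}$ and $\bZ/p$-gradings $A = \bigoplus_{i=0}^{p-1} A_i$, equivalently $p$-closed derivations $D : A \to A$ (i.e.\ $D^p = D$) whose $i$-eigenspace is $A_i$. Under this dictionary the quotient is $X/\mu_p = \Spec A_0 = \Spec \ker D$; the point $z$ is fixed iff $D$ preserves $\fm_z$; it is an isolated fixed point iff the zero scheme of $D$ (cut out by $D(A) \cdot A$) is supported at $z$; and symplecticness at $z$ translates into $D\omega = 0$ for a chosen generator $\omega$ of the dualizing module $\omega_{X,z}$, which linearizes, for smooth $z$, to $\tr(\bar D \mid \fm_z/\fm_z^2) \equiv 0 \pmod p$.

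For part (1), not-fixed means $Df \notin \fm_z$ for some $f \in \fm_z$; taking $f$ as part of a local coordinate system puts $D$ into a simple normal form from which $A_0$ can be written down directly, and in the smooth case $A_0$ turns out to be regular. A short case analysis by the RDP type at $z$ produces Table \ref{table:non-fixed RDP}. For part (2), the semisimple operator $\bar D$ on $\fm_z/\fm_z^2$ has trace $0$ in $\bF_p$ and is nonzero by faithfulness, so for smooth $z$ its eigenvalues are $(i, -i)$ with $i \in \bF_p^\times$; diagonalizing gives $D = i(x \partial_x - y \partial_y)$, and then $A_0 = k[[x^p, y^p, xy]] \cong k[[u,v,w]]/(uv - w^p)$, the $A_{p-1}$ singularity. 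When $z$ is itself an RDP one treats each type via its standard presentation together with the compatible faithful $\mu_p$-action, yielding Table \ref{table:fixed symplectic RDP}. For part (3), $\tr \bar D \not\equiv 0$ together with isolatedness of the zero of $D$ forces, after diagonalization, $D = \lambda x \partial_x + \mu y \partial_y$ with $\lambda + \mu \not\equiv 0 \pmod p$, so $A_0$ is a semigroup subring of $k[[x,y]]$; since $dx \wedge dy$ is a $D$-eigenvector of nonzero eigenvalue, no $D$-invariant generator of the canonical module exists, hence $\Spec A_0$ is not Gorenstein and in particular not an RDP.

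The principal obstacle lies in the RDP cases of parts (1) and (2), since in small characteristic RDPs can have non-classical (``wild'') defining equations, and one must enumerate the faithful $\mu_p$-actions on each such germ for each characteristic and identify the resulting quotient. The two tables are the output of this case-by-case analysis, matched against the local normal forms developed in Sections \ref{sec:tame} and \ref{sec:mu_p RDP}.
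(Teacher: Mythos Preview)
Your overall framework (derivations of multiplicative type, eigendecomposition, normal forms) matches the paper, and your treatment of part (1) and of the smooth case in part (2) is essentially the paper's argument.

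There is, however, a genuine gap in part (3). Your argument (``diagonalize to $D = \lambda x\partial_x + \mu y\partial_y$ with $\lambda+\mu \not\equiv 0$, so the invariant ring is a non-Gorenstein semigroup ring'') only applies when $z$ is a \emph{smooth} point. The statement covers the case where $z$ is an RDP, an isolated fixed point, and the action is non-symplectic; here $\fm_z/\fm_z^2$ is three-dimensional, the trace interpretation of symplecticness no longer holds, and there is no direct diagonal normal form for $D$ on a hypersurface germ. The paper handles this uniformly by a different mechanism: it proves (Proposition \ref{prop:2-forms}) a canonical isomorphism between the weight-$0$ part of $H^0(\Spec\cO_{X,z}\setminus\{z\},\Omega^2)$ and $H^0(\Spec\cO_{Y,\pi(z)}\setminus\{\pi(z)\},\Omega^2)$ that preserves non-vanishing. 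If $\pi(z)$ were smooth or an RDP, the right-hand side would have a nowhere-vanishing generator, which would pull back to a weight-$0$ nowhere-vanishing generator on the left, contradicting non-symplecticness. Your Gorensteinness argument is the shadow of this at a smooth point, but you need the $2$-form comparison to make it work at an RDP.

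For part (2) at an RDP you take a different route than the paper. You propose to produce Table \ref{table:fixed symplectic RDP} directly and read off from it that every quotient is an RDP. The paper instead first proves the quotient is an RDP \emph{abstractly}, without classification: blow up the fixed RDP, show (Lemma \ref{lem:mu_p RDP}) that the extended derivation is still symplectic at every new fixed point and that the induced map of quotients is crepant, and iterate until only smooth fixed points remain; then the final quotient has only $A_{p-1}$ singularities and maps crepantly to $Y$, so $Y$ has canonical singularities. The classification (Proposition \ref{prop:list of fixed symplectic RDP}) is done afterwards and independently. Your approach is legitimate in principle, but as written it is only a promise: you would have to carry out the full enumeration of homogeneous RDP equations (including the non-quasi-homogeneous $D_n^r$, $E_n^r$ in characteristics $2,3$) \emph{and} compute each invariant ring, whereas the paper's crepant-induction argument gives the RDP conclusion with almost no computation.
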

Moreover we also give a partial classification of local $\mu_{p^e}$- and $\mu_n$-actions (Propositions \ref{prop:non-fixed mu_n RDP} and \ref{prop:mu_p^e RDP})
and a complete classification of local symplectic $\mu_n$-actions (Proposition \ref{prop:symplectic mu_n RDP}).
We hope that these local results would have applications other than K3 surfaces.

The results on $\mu_n$-quotients, orders of symplectic $\mu_n$-actions, and orders of $\mu_n$-actions on K3 surfaces 
are discussed in Sections \ref{sec:quotient}, \ref{sec:symplectic order}, and \ref{sec:order} respectively.

In Section \ref{sec:examples} we give several examples of $\mu_{n}$-actions on K3 surfaces.

\medskip

Throughout the paper we work over an algebraically closed field $k$ of $\charac k = p \geq 0$.
Varieties are separated integral $k$-schemes of finite type (not necessarily proper or smooth),
and surfaces are $2$-dimensional varieties.
We denote the smooth locus of a variety $X$ by $X^\sm$.

\section{Preliminaries}

\subsection{K3 surfaces and rational double points} \label{subsec:RDP}

Rational double point singularities (RDPs) of surfaces 
are precisely the canonical surface singularities that are not smooth.
They are classified into types $A_n$ ($n \geq 1$), $D_n$ ($n \geq 4$), $E_n$ ($n = 6,7,8$) 
by their dual graph of the exceptional curves of the minimal resolution, which are Dynkin diagrams of type $A_n$, $D_n$, or $E_n$.
The number $n$ is equal to the number of the exceptional curves, and 
we say that the RDP is of \emph{index} $n$.
The dual graph determines the formal isomorphism class of an RDP 
except in certain cases in characteristic $2,3,5$.
For the exceptional cases we use Artin's notation $D_n^r$ and $E_n^r$ (see \cite{Artin:RDP}).

We recall the classification, given by 
Bombieri--Mumford \cite{Bombieri--Mumford:II}, of proper smooth surfaces $X$ with numerically trivial canonical divisor $K_X$: 
they consist of four classes, with the characterizing properties as shown in Table \ref{table:kappa=0 surfaces}.
Here $b_i = \dim \Het^i(X, \bQ_l)$ is the $i$-th $l$-adic Betti number for an auxiliary prime $l \neq \charac k$.
Enriques and (quasi-)hyperelliptic surfaces in characteristic $2$ and $3$ may have unusual values of $\dim H^1(\cO_X)$ and $\ord(K_X)$.

\begin{table} 
\caption{Surfaces with numerically trivial canonical divisors} \label{table:kappa=0 surfaces}
\begin{tabular}{llllll}
\toprule
& $\dim H^1(\cO_X)$ & $b_1$ & $b_2$ & $\ord(K_X)$ & $\charac$ \\
\midrule
abelian                & $2$ & $4$ & $6$  & $1$       & any \\
K3                     & $0$ & $0$ & $22$ & $1$       & any \\
Enriques               & $0$ & $0$ & $10$ & $2$       & any \\
Enriques               & $1$ & $0$ & $10$ & $1$       & $2$ \\
(quasi-)hyperelliptic  & $1$ & $2$ & $2$  & $2,3,4,6$ & any \\
(quasi-)hyperelliptic  & $2$ & $2$ & $2$  & $1$       & $2,3$ \\
\bottomrule
\end{tabular}
\end{table}

The distinction between hyperelliptic and quasi-hyperelliptic surfaces is not important in this paper.
Also the choice of the origin of an abelian surface is not important.

\begin{defn}
\emph{RDP surfaces}
are surfaces that have only RDPs as singularities (if any).
Hence, any smooth surface is an RDP surface by definition.

\emph{RDP K3 surfaces}
are proper RDP surfaces whose minimal resolutions are (smooth) K3 surfaces. 
We similarly define RDP abelian, RDP Enriques, and RDP (quasi-)hyperelliptic surfaces.

However, since abelian surfaces and (quasi-)hyperelliptic surfaces do not admit smooth rational curves,
any RDP abelian or RDP (quasi-)hyperelliptic surface is smooth.
\end{defn}

\begin{rem} \label{rem:no global derivation}
Smooth K3 surfaces in characteristic $p > 0$ admit no nontrivial global vector fields 
(Rudakov--Shafarevich \cite{Rudakov--Shafarevich:inseparable}*{Theorem 7}, Nygaard \cite{Nygaard:vectorK3}*{Corollary 3.5}),
and hence admit no nontrivial actions of $\mu_p$ (or $\alpha_p$).
However RDP K3 surfaces may admit such actions.
\end{rem}

\begin{prop} \label{prop:2-forms of RDP}
For any RDP surface $X$,
the pullback by the morphism $X^{\sm} \cong \tilde X \setminus E \injto \tilde X$ to the minimal resolution $\tilde X$ of $X$
induces an isomorphism $H^0(X^{\sm}, (\Omega^2_X)^{\otimes n}) \cong H^0(\tilde X, (\Omega^2_{\tilde X})^{\otimes n})$, 
where $E$ is the exceptional divisor.
Non-vanishing forms on one side correspond to non-vanishing ones on the other side.
\end{prop}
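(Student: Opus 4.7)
The plan is to identify both $H^0(\tilde X, (\Omega^2_{\tilde X})^{\otimes n})$ and $H^0(X^{\sm}, (\Omega^2_X)^{\otimes n})$ with $H^0(X, \omega_X^{\otimes n})$, using three classical properties of RDPs: they are Gorenstein (even hypersurface) singularities, rational singularities, and canonical singularities. Being Gorenstein makes $\omega_X$ a line bundle on all of $X$ that restricts to $\Omega^2_{X^{\sm}}$; being rational gives $\pi_*\cO_{\tilde X}=\cO_X$; and being canonical for a Gorenstein surface is equivalent to $\omega_{\tilde X}\cong\pi^*\omega_X$.

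First I would use normality of $X$ (RDPs are normal and $\Sing X$ has codimension two) together with the reflexivity of the line bundle $\omega_X^{\otimes n}$ to get, via Hartogs-type extension, $H^0(X^{\sm},\omega_X^{\otimes n})=H^0(X,\omega_X^{\otimes n})$. Second, from $\omega_{\tilde X}^{\otimes n}\cong\pi^*\omega_X^{\otimes n}$ together with the projection formula and $\pi_*\cO_{\tilde X}=\cO_X$, I would obtain $\pi_*\omega_{\tilde X}^{\otimes n}\cong\omega_X^{\otimes n}$, hence $H^0(\tilde X,\omega_{\tilde X}^{\otimes n})=H^0(X,\omega_X^{\otimes n})$. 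Tracing the identifications, the composite equals restriction along $X^{\sm}\cong\tilde X\setminus E\injto\tilde X$, giving the claimed isomorphism.

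For the non-vanishing compatibility, one direction is trivial: the restriction of a nowhere-zero section of $\omega_{\tilde X}^{\otimes n}$ to the open subset $X^{\sm}$ is nowhere zero. For the converse, given $\omega\in H^0(X^{\sm},\omega_X^{\otimes n})$ that is nowhere zero, local triviality of $\omega_X^{\otimes n}$ near each RDP $z$ (a consequence of the Gorenstein hypersurface presentation) identifies the extension of $\omega$ with some $f\in\cO_{X,z}$, and the nonvanishing of $f$ on the punctured neighborhood forces $f$ to be a unit at $z$: otherwise the divisor of $f$ would be a codimension-one subset of the normal surface $X$ contained in the single point $z$, which is absurd. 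Pulling back by $\pi$ and using $\omega_{\tilde X}\cong\pi^*\omega_X$ then yields a nowhere-zero section of $\omega_{\tilde X}^{\otimes n}$, as required.

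The only step that is not essentially formal is the canonical property $\omega_{\tilde X}\cong\pi^*\omega_X$ of RDPs, which is classical (and verifiable case-by-case on Artin's list of equations, covering the exceptional types in small characteristic); I would cite this rather than reprove it.
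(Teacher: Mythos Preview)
Your argument is correct and is the standard conceptual route: identify both sides with $H^0(X,\omega_X^{\otimes n})$ using that RDPs are Gorenstein (so $\omega_X$ is invertible and agrees with $\Omega^2$ on $X^{\sm}$), normal (Hartogs extension for reflexive sheaves), and crepant ($\omega_{\tilde X}\cong\pi^*\omega_X$, together with $\pi_*\cO_{\tilde X}=\cO_X$ and the projection formula). The non-vanishing argument is also fine.

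The paper takes a different, more explicit path. It deduces the proposition from a local statement (Proposition~\ref{prop:2-forms of local RDP}), proved by writing down a generator of $H^0(\Spec A\setminus\{\fm\},\Omega^2_{A/k})$ directly: if $\hat A=k[[x_1,x_2,x_3]]/(F)$, the form $\omega=F_{x_i}^{-1}\,dx_{i+1}\wedge dx_{i+2}$ glues over the punctured spectrum, and an explicit calculation on one chart of $\Bl_{\fm}A$ shows that $\omega$ pulls back to the analogous generator there. Iterating over the blow-ups in a minimal resolution gives the global statement. In effect, the paper \emph{proves} crepancy by hand rather than citing it. This buys self-containedness and, more importantly, produces the concrete local generator that is reused later: the symplecticness of a local $\mu_n$-action (Definition~\ref{def:symplectic local}) is formulated in terms of the weight of exactly this rank-one module, and the computations in Example~\ref{ex:mu_p easy example} and in the proofs of Lemma~\ref{lem:mu_p RDP} and Proposition~\ref{prop:2-forms} rely on having $\omega$ in this explicit form. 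Your approach is cleaner as a proof of the bare isomorphism, but the paper's explicit construction does double duty elsewhere.
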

\begin{proof}
This follows from the following local version applied repeatedly.
\end{proof}

\begin{prop} \label{prop:2-forms of local RDP}
Let $(A,\idealm)$ be the localization at a closed point of an RDP surface.
Then $H^0(\Spec A \setminus \set{\idealm}, \Omega^2_{A/k})$ 
is a free $A$-module of rank one.
If $A$ is smooth then this space is isomorphic to $H^0(\Spec A, \Omega^2_{A/k})$. 
If $A$ is an RDP
and $(A',\idealm')$ is the localization at a closed point of $\Bl_{\idealm} A$
then any generator of the above space
extends to a generator of $H^0(\Spec A' \setminus \set{\idealm'}, \Omega^2_{A'/k})$.
\end{prop}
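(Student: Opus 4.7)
The plan is to identify $H^0(\Spec A \setminus \{\fm\}, \Omega^2_{A/k})$ with the dualizing module $\omega_A$, which is free of rank one because $A$ is Gorenstein, and then to deduce the blowup statement in part (3) from crepancy of the minimal resolution. If $A$ is a regular two-dimensional local ring, then $\Omega^2_{A/k}$ is already free of rank one and its sections extend across the closed point (since $A$ has depth $2$, so Hartogs applies), which takes care of the smooth case and the second sentence of the statement simultaneously.

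For the RDP case: RDPs are normal Cohen--Macaulay of dimension two and Gorenstein, so the dualizing module $\omega_A$ is a free $A$-module of rank one. On the smooth locus $U = \Spec A \setminus \{\fm\}$ we have $\omega_U = \Omega^2_U$, and depth-$2$ extension applied to the coherent reflexive module $\omega_A$ gives
\[
H^0(U, \Omega^2_{A/k}) = H^0(U, \omega_U) = H^0(\Spec A, \omega_A) \cong A,
\]
which proves the claimed freeness.

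For (3), the key input is crepancy of the minimal resolution $\pi \colon \tilde X \to \Spec A$, namely $\omega_{\tilde X} = \pi^* \omega_A$, which holds because Du Val singularities are canonical Gorenstein (all discrepancies equal to zero). A generator $\omega$ of $\omega_A$ therefore pulls back to a nowhere-vanishing section of $\omega_{\tilde X}$. The blowup $Y = \Bl_\fm(\Spec A)$ is a normal RDP surface dominated by $\tilde X$, and the restriction of $\pi$ to $\pi^{-1}(Y^{\sm}) \to Y^{\sm}$ is an isomorphism, so $\omega$ restricts to a nowhere-vanishing section of $\Omega^2_{Y^{\sm}}$. For any closed point of $Y$ with local ring $A'$, the punctured spectrum $V = \Spec A' \setminus \{\fm'\}$ is contained in $Y^{\sm}$ (since $\fm'$ is the only possible singular point of $\Spec A'$), so applying parts (1)--(2) to $A'$ shows that $\omega|_V$ is a generator of the free rank-one module $H^0(V, \Omega^2_{A'/k})$.

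The main obstacle is the geometric input used in step (3): that $Y = \Bl_\fm(\Spec A)$ is normal with at worst RDP singularities and is dominated by the minimal resolution with vanishing discrepancies along every exceptional curve. This is a standard consequence of the classification of Du Val singularities, but checking it uniformly in all characteristics, including Artin's non-standard RDPs $D_n^r, E_n^r$ in characteristic $2, 3, 5$, may require some care. Once this structural statement is in hand, the generator property of $\omega|_V$ is automatic from crepancy.
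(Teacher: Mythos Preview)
Your proof is correct and takes a genuinely different route from the paper's.

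The paper works entirely by hand: after passing to the completion $\hat A = k[[x_1,x_2,x_3]]/(F)$, it writes down the explicit generator $\omega = (F_{x_i})^{-1}\,dx_{i+1}\wedge dx_{i+2}$ (well-defined because $\sum_i F_{x_i}\,dx_i = dF = 0$ in $\hat A$), checks that it generates on each open piece $\{F_{x_i}\neq 0\}$, and then verifies by direct coordinate calculation on one blow-up chart that $\omega$ pulls back to the analogous generator $\omega'$ upstairs. Your argument replaces all of this with structure theory: identify $H^0(U,\Omega^2)$ with the dualizing module $\omega_A$ (free of rank one since $A$ is Gorenstein), and deduce the extension statement from crepancy of the minimal resolution and the fact that $\Bl_\fm(\Spec A)$ sits between $\Spec A$ and $\tilde X$ as an RDP surface.

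Two remarks. First, your last sentence is slightly loose: knowing that $\omega$ is nowhere-vanishing on $V=\Spec A'\setminus\{\fm'\}$ is not by itself enough to conclude it generates the $A'$-module $H^0(V,\Omega^2)$, since a priori one could have $\omega=f\eta$ with $f\in\fm'$. What you actually need (and have already arranged) is that the pullback of $\omega$ to $\tilde X$ is nowhere-vanishing \emph{including over} $\fm'$; then $f$ cannot vanish on the exceptional fibre, so $f\notin\fm'$. Equivalently, invoke crepancy for $\tilde X\to Y$ (valid since $Y$ has RDPs) to see that $\omega$ generates $(\omega_Y)_{\fm'}=\omega_{A'}$.

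Second, the paper's explicit approach buys something your approach does not: the concrete formula for $\omega$ is used repeatedly later (for instance in Example~\ref{ex:mu_p easy example} to compute the weight of $\omega$ under a $\mu_n$-action, and in the proof of Theorem~\ref{thm:symplectic mu_n} for $n=p$ to relate zeros of $\eta=d\log f$ to RDP indices). Your argument is cleaner for the bare statement, but if you want to feed into those computations you would still need the explicit description.
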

\begin{proof}
If $A$ is smooth then $\Omega^2_{A/k}$ is free of rank $1$ and the assertion is clear.
Suppose $A$ is an RDP.
Then it is a hypersurface isolated singularity, and it is well-known that for such singularities the canonical divisor is trivial, and then the former assertion follows.
Since an RDP is a canonical singularity, the pullback of the canonical divisor to $\Bl_{\idealm} A$ is also trivial, hence the latter assertion follows.
\end{proof}

\subsection{Group schemes of multiplicative type} \label{subsec:groupscheme}

Recall that we are working over an algebraically closed field $k$. 

We consider finite commutative group schemes $G$ of multiplicative type over $k$.
This means that $G$ is of the form $\prod_j \mu_{n_j}$ for some positive integers $n_j$.
The Cartier dual $\dual{G} = \sHom(G, \bG_m)$ of $G$ is a finite \'etale group scheme and can be identified with the finite group $\dual{G}(k)$ of $k$-valued points.
Using this finite commutative group $\dual{G}$ we have the following explicit description:
$G = \Spec k[t_{i}]_{i \in \dual{G}} / (t_{i} t_{j} - t_{i+j}, t_0 - 1)$,
with the group operations 
$m \colon G \times G \to G$,
$e \colon \Spec k \to G$,
$i \colon G \to G$ given by 
$m^*(t_{i}) = t_{i} \otimes t_{i}$, 
$e^*(t_{i}) = 1$,
$i^*(t_{i}) = t_{-i}$.

An action $\alpha \colon G \times \Spec B \to \Spec B$ corresponds, 
via $\alpha^*(b) = \sum_{i \in \dual{G}} t_{i} \otimes \pr_{i}(b)$,
to decompositions $B = \bigoplus_{i \in \dual{G}} B_i$ to $k$-vector subspaces
satisfying $B_i B_j \subset B_{i+j}$.
We say an element $b$ or a subset of $B_i$ to be \emph{homogeneous of weight} $i$ and we write $\wt(b) = i$.

Such a decomposition $B = \bigoplus_{i} B_i$ naturally extends to a decomposition $\Omega^*_{B/k} = \bigoplus_i (\Omega^*_{B/k})_i$
satisfying $d(B_i) \subset (\Omega^1_{B/k})_i$ and $(\Omega^*_{B/k})_i (\Omega^*_{B/k})_j \subset (\Omega^*_{B/k})_{i+j}$.

If $G$ acts on a scheme $X$ that is not necessarily affine but admits a covering by $G$-stable affine open subschemes 
(which is the case if e.g.\ $X$ is quasi-projective or $G$ is local),
then the $G$-action admits a quotient $\map{\pi}{X}{X/G}$,
and induces decompositions 
$\pi_* \cO_X = \bigoplus_{i} (\pi_* \cO_X)_i$,
$\pi_* \Omega^*_{X/k} = \bigoplus_{i} (\pi_* \Omega^*_{X/k})_i$, and
$H^0(X, (\Omega^*_{X/k})^{\otimes n}) = \bigoplus_i (H^0(X, (\Omega^*_{X/k})^{\otimes n}))_i$,
compatible with multiplications.

If $\charac k$ does not divide the order of $\dual{G}$,
then $B_i$ are the eigenspaces for the action of $G(k)$ with eigenvalues $i \in \dual{G}(k) = \Hom(G(k),k^*)$.

If $\charac k = p > 0$ and $\dual{G}$ is cyclic of order $p$ (hence $G \cong \mu_p = \Spec k[t_1]/(t_1^p - 1)$ for a choice of a generator $1$ of $\dual{G}$) then
giving such a decomposition is also equivalent to giving a $k$-derivation $D$ on $B$ of multiplicative type (i.e.\ $D^p = D$)
under the correspondence $B_i = B^{D = i} = \set{b \in B \mid D(b) = ib }$
(this correspondence depends on the choice of a generator $1$ of $\dual{G}$).
Moreover $D$ extends to a $k$-linear endomorphism on $\Omega^*_{B/k}$ satisfying $D(df) = d(D(f))$, $D^p = D$, 
and the Leibniz rule $D(\omega \wedge \eta) = \omega \wedge D(\eta) + D(\omega) \wedge \eta$.

Now we generalize the notion of symplecticness of automorphisms to actions of group schemes like $\mu_n$.

\begin{defn} \label{def:symplectic}
Let $G$ be a finite group scheme of multiplicative type.
Let $X$ be either an abelian surface or an RDP K3 surface,
equipped with an action of $G$.
We say that the action is \emph{symplectic} 
if the $1$-dimensional space $H^0(X^{\sm}, \Omega^2_{X/k})$ with respect to the action of $G$ is of weight $0$.
\end{defn}

\begin{rem} \label{rem:commutative}
Under the assumptions of Definition \ref{def:symplectic}, suppose $G$ is reduced. 
Equivalently, this means that $G$ is a constant group scheme corresponding to a finite abelian group of order prime to $p$.
Then, by Proposition \ref{prop:2-forms of RDP}, 
our symplecticness is equivalent to the symplecticness of the induced $G$-action on the minimal resolution $\tilde{X}$ in the usual sense (i.e.\ preserving the global $2$-form).
This suggests that our definition of the symplecticness of $\mu_n$-actions is a natural generalization 
of that of $\bZ/m\bZ$-actions (order $m$ automorphisms) for $m$ not divisible by $\charac k$.

On the other hand, if $G = \bZ/p\bZ$ (which does not belong to the class considered in Definition \ref{def:symplectic}), 
then any action of $G$ preserves the global $2$-form, since there are no nontrivial $p$-th roots of unity.
Thus the usual definition of symplecticness is useless in this case.
We do not know whether there is a useful notion of symplecticness 
in a larger class of group schemes containing $\bZ/p\bZ$ or $\alpha_p$.
\end{rem}

\subsection{Derivations of multiplicative type} \label{subsec:derivation}

In this section assume $\charac k = p > 0$.

Recall that, given an action of a group scheme $G$ on a scheme $X$,
the fixed point scheme $X^G \subset X$ is characterized by the property 
$X^G(T) = \Hom_G(T,X)$ for any $k$-scheme $T$ equipped with the trivial $G$-action.
If $G = \mu_p$ and $D$ is the corresponding derivation,
we write $\Fix(D) = X^G$ and also call it the fixed locus of $D$.

\begin{defn} \label{def:fixedpoints}
We say that a closed point $w \in X$ is \emph{fixed} by the $\mu_{n}$-action,
or by the corresponding derivation if $n = p$, 
if $w \in X^{\mu_{n}}$.
\end{defn}

\begin{prop} \label{prop:fixedpoints}
Let $k$ be an algebraically closed field.
Let $X = \Spec B$ be a Noetherian affine $k$-scheme equipped with a $\mu_{p^e}$-action.
For each closed point $w \in X$,
the assertions (\ref{item:fixedpointscheme})--(\ref{item:in m}) are equivalent.
If $e = 1$ and $D$ is the corresponding derivation then 
the assertions (\ref{item:fixedpointscheme})--(\ref{item:DA}) are equivalent,
and if moreover $X$ is a smooth variety then 
also (\ref{item:RS}) is equivalent.
\begin{enumerate}
\item \label{item:fixedpointscheme} $w$ is a $\mu_{p^e}$-fixed point.
\item \label{item:eigenvectors} The maximal ideal $\idealm_w$ of $\cO_{X,w}$ is generated by homogeneous elements.
\item \label{item:A/m} The canonical morphism $B \to B / \idealm_w$ is $\mu_{p^e}$-equivariant,
where $B/\idealm_w$ is equipped with the trivial action (i.e. the decomposition concentrated on $(-)_0$).
\item \label{item:in m} $B_i \subset \idealm_w$ for each $i \neq 0$. 
\item \label{item:Dm} $D(\idealm_w) \subset \idealm_w$.
\item \label{item:DA} $D(\cO_{X,w}) \subset \idealm_w$.
\item \label{item:RS} $D$ has singularity at $w$ in the sense of \cite{Rudakov--Shafarevich:inseparable}*{Section 1}.
\end{enumerate}
Finally, if (\ref{item:fixedpointscheme}) holds then the $\mu_{p^e}$-action extends to the blow-up $\Bl_w X$.
\end{prop}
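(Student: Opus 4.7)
The plan is to establish the equivalences (\ref{item:fixedpointscheme})--(\ref{item:in m}) first by formal manipulations with the weight decomposition $B = \bigoplus_i B_i$ induced by the $\mu_{p^e}$-action, then to handle the derivation versions (\ref{item:Dm})--(\ref{item:RS}) in the $e = 1$ case, and finally to address the extension to the blowup. For (\ref{item:fixedpointscheme}) $\Leftrightarrow$ (\ref{item:A/m}) I would unpack the functorial definition of $X^G$: $z$ lies in $X^G$ precisely when the closed immersion $\Spec(B/\fm_z) = \Spec k \to X$ is $G$-equivariant with respect to the trivial action on $\Spec k$, which is exactly (\ref{item:A/m}). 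For (\ref{item:A/m}) $\Leftrightarrow$ (\ref{item:in m}), triviality of $G$ on $B/\fm_z$ concentrates its weight decomposition in degree $0$, so equivariance forces $B_i \to B/\fm_z$ to vanish for $i \neq 0$. For (\ref{item:in m}) $\Rightarrow$ (\ref{item:eigenvectors}) I would use the splitting $B = k \oplus \fm_z$ coming from the canonical section of the residue map; since $k$ carries the trivial action, this section lands in $B_0$, so $\fm_z = (B_0 \cap \fm_z) \oplus \bigoplus_{i \neq 0} B_i$, which is manifestly generated by homogeneous elements. The converse (\ref{item:eigenvectors}) $\Rightarrow$ (\ref{item:in m}) is then obtained by decomposing each of the chosen homogeneous generators into its weight components.

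For $e = 1$, using $B_i = \{b \in B \mid D(b) = ib\}$ and the fact that $D$ is a $k$-derivation, the implication (\ref{item:in m}) $\Rightarrow$ (\ref{item:Dm}) holds because $D$ vanishes on $B_0$ and preserves each $B_i$, so it stabilises the decomposition of $\fm_z$ obtained above. The implication (\ref{item:Dm}) $\Rightarrow$ (\ref{item:DA}) follows from $B = k \oplus \fm_z$ together with $D(k) = 0$, and (\ref{item:DA}) $\Rightarrow$ (\ref{item:in m}) from the observation that for $b \in B_i$ with $i \neq 0$ one has $ib = D(b) \in \fm_z$ while $i$ is a unit in $\bF_p$. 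The equivalence with (\ref{item:RS}) in the smooth case is just an unpacking of definitions: $D$ having a singularity at $z$ in the sense of Rudakov--Shafarevich means that the corresponding vector field vanishes at $z$, i.e.\ $D(x_i) \in \fm_z$ for a regular system of parameters $x_1, \ldots, x_n$, which amounts to $D(B) \subset \fm_z$.

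For the final clause, once (\ref{item:eigenvectors}) supplies homogeneous generators of $\fm_z$, each power $\fm_z^n$ decomposes compatibly with the $\hat{G}$-grading on $B$; the Rees algebra $\bigoplus_{n \geq 0} \fm_z^n$ therefore acquires a compatible $G$-action, and taking $\Proj$ transports it to $\Bl_z X$. I do not foresee any substantial obstacle, since every step is formal; the only ingredient with real content is (\ref{item:in m}) $\Rightarrow$ (\ref{item:eigenvectors}), which rests on compatibility of the splitting $B = k \oplus \fm_z$ with the weight decomposition.
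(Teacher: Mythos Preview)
Your argument is essentially correct and close to the paper's, with one sentence that does not say what you need it to say.  Your claimed justification of (\ref{item:eigenvectors}) $\Rightarrow$ (\ref{item:in m}) --- ``decomposing each of the chosen homogeneous generators into its weight components'' --- is vacuous, since homogeneous generators are already single weight components.  What you actually need is that $\fm_z$ is a \emph{graded} ideal: if $\fm_z = \sum_j B y_j$ with $y_j \in B_{i_j}$, then for $m = \sum_j a_j y_j \in \fm_z$ one decomposes the \emph{coefficients} $a_j = \sum_l a_{j,l}$ to see $\pr_i(m) = \sum_j a_{j,i-i_j} y_j \in \fm_z$; then for $b \in B_i$ with $i \neq 0$, writing $b = c + m$ with $c \in k \subset B_0$ and $m \in \fm_z$, applying $\pr_i$ gives $b = \pr_i(m) \in \fm_z$.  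This is exactly the paper's route (\ref{item:eigenvectors}) $\Rightarrow$ (\ref{item:A/m}) $\Rightarrow$ (\ref{item:in m}), just packaged differently.

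Two genuine but minor differences from the paper are worth noting.  First, for $e = 1$ the paper proves (\ref{item:Dm}) $\Rightarrow$ (\ref{item:eigenvectors}) directly via the Vandermonde matrix $(i^l)_{i,l=0}^{p-1}$ applied to $D^l(x_j) = \sum_i i^l x_{j,i}$; your cycle (\ref{item:in m}) $\Rightarrow$ (\ref{item:Dm}) $\Rightarrow$ (\ref{item:DA}) $\Rightarrow$ (\ref{item:in m}) sidesteps this and is cleaner.  Second, for the blow-up the paper writes down the action on each affine chart $B[x_h/x_j]$ by declaring $\wt(x_h/x_j) = i_h - i_j$, whereas your Rees-algebra formulation is the more conceptual version of the same thing: once $\fm_z$ is graded, so is $\bigoplus_{n \geq 0} \fm_z^n$, and $\Proj$ inherits the action.
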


\begin{proof}
Let $B = \bigoplus_{i \in \bZ/p^e\bZ} B_i$ be the corresponding decomposition.

(\ref{item:fixedpointscheme} $\iff$ \ref{item:A/m})
By the definition of $X^{\mu_p}$, a closed point $w \in X$ is a ($k$-valued) point of $X^{\mu_p}$
if and only if $B \to B/\idealm_w$ is compatible with the projections $\pr_i$ to the $i$-th summand $(-)_i$ for all $i$,
where $B/\idealm_w$ is equipped with the trivial decomposition.

(\ref{item:eigenvectors} $\iff$ \ref{item:A/m})
If (\ref{item:A/m}) holds then we have $\pr_i(\idealm_w) \subset \idealm_w$ for all $i$,
and then each element $x$ of $\idealm_w$ is the sum of homogeneous elements $\pr_i(x) \in \idealm_w$.
Conversely if $\idealm_w$ is generated by homogeneous elements then $\pr_i(\idealm_w) \subset \idealm_w$ for all $i$,
which implies (\ref{item:A/m}).

(\ref{item:A/m} $\iff$ \ref{item:in m}) 
Easy.

Assume $e = 1$.

(\ref{item:eigenvectors} $\iff$ \ref{item:Dm})
Assume $D(\idealm_w) \subset \idealm_w$.
Take a system of generators $(x_j)$ of $\idealm_w$.
For each $j$ let $x_j = \sum_{i \in \bF_p} x_{j,i}$ be the decomposition of $x_j$ in $B = \bigoplus_{i} B_i$.
Then $D^l(x_j) = \sum_i i^l x_{j,i}$ is also in $\idealm_w$.
Since the matrix $(i^l)_{i,l=0}^{p-1}$ is invertible,
this implies $x_{j,i} \in \idealm_w$.
Thus $\idealm_w$ is generated by eigenvectors.
The converse is clear.

(\ref{item:Dm} $\iff$ \ref{item:DA}) This is clear since $\cO_{X,w} = \idealm_w + k$ and $D \restrictedto{k} = 0$.

(\ref{item:Dm} $\iff$ \ref{item:RS})
Take coordinates $x_1, \ldots, x_n$ at a point $w$ 
and write $D = \sum_j f_j \cdot (\partial / \partial x_j)$.
Then the both conditions are equivalent to $(f_j) \subset \idealm_w$.

We show the final assertion assuming (\ref{item:eigenvectors}).
If the maximal ideal $\idealm$ is generated by homogeneous elements $x_j \in B_{i_j}$,
then for each $j$ we can extend the action on the affine piece $\Spec B[x_h/x_j]_h$ of $\Bl_w X$
by declaring $x_h/x_j$ to be homogeneous of weight $i_h - i_j$.
\end{proof}

The next lemma enables us to take useful coordinates at a point not fixed by $D$.

\begin{lem} \label{lem:good coord}
If $B$ is a Noetherian local ring, $D$ is a derivation of multiplicative type, and the closed point is not fixed by $D$,
then the maximal ideal $\idealm$ of $B$ is generated by elements $x_1, \ldots, x_{m-1}, y$
with $\wt(x_j) = 0$ and $\wt(1+y) = 1$.
If $\idealm$ is generated by $n$ elements then we can take $m = n$.
If $\dim B \geq 2$ then $D$ does not extend to a derivation of the blow-up $\Bl_{\idealm} B$.
\end{lem}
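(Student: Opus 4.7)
My plan proceeds in four steps. First, producing $y$: by Proposition \ref{prop:fixedpoints}(\ref{item:in m}) the closed point being non-fixed gives some $i \neq 0$ with $B_i \not\subset \fm$, so there is a unit $u \in B$ of weight $i$. Since $i$ generates $\bF_p$, a suitable power $w = u^j$ is a unit of weight $1$; rescaling $w$ by the inverse of its residue in $k^\times$ produces a unit $v \in B_1$ with $\bar v = 1$. Then $y := v - 1 \in \fm$ satisfies $1 + y = v \in B_1$.

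Second, I claim $\fm = \fm_0 B + (y)$ where $\fm_0 := \fm \cap B_0$. Any $b \in B$ decomposes weight-wise as $b = \sum_i b_i$, and since $v$ is a unit of weight $1$, I can write $b = \sum_i v^i c_i$ with $c_i := v^{-i} b_i \in B_0$. Expanding $v^i = (1+y)^i$ yields $b \equiv \sum_i c_i \pmod{(y)}$, with $\sum_i c_i \in B_0$. If $b \in \fm$, then $\sum_i c_i \in \fm_0$, giving $\fm \subset \fm_0 B + (y)$; the reverse inclusion is immediate.

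Third, and this is the delicate point, I need $\bar y \neq 0$ in the finite-dimensional $k$-space $\fm/\fm^2$ before applying Nakayama. If instead $y \in \fm^2$, then $\fm \subset \fm_0 B + \fm^2$, and Nakayama forces $\fm = \fm_0 B$; by Noetherianness $\fm$ is generated by finitely many weight-$0$ (hence homogeneous) elements, contradicting the non-fixed hypothesis via Proposition \ref{prop:fixedpoints}(\ref{item:eigenvectors}). Therefore $\bar y$ extends to a basis of $\fm/\fm^2$ using images from $\fm_0$; lifting gives $x_1, \ldots, x_{n'-1} \in \fm_0$ such that $(x_1, \ldots, x_{n'-1}, y)$ is a minimal generating set, where $n' = \dim_k \fm/\fm^2$. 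Any $n \geq n'$ is then accommodated by padding the list with zeros (which lie in $B_0$).

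Fourth, ruling out extension to $\Bl_\fm B$ when $\dim B \geq 2$: then $m \geq 2$, so at least one $x_1$ exists. An extension of $D$ to the affine chart $B[\fm/x_1]$ of $\Bl_\fm B$ would require $D(y/x_1) = (D(y) x_1 - y D(x_1))/x_1^2 = (1+y)/x_1$ (using $D(y) = 1 + y$ and $D(x_1) = 0$) to lie in $B[\fm/x_1]$. Rewriting this as $1/x_1 + y/x_1$ shows the condition forces $1/x_1 \in B[\fm/x_1]$, i.e., $x_1$ to be a unit there; but $x_1$ cuts out the nonempty exceptional divisor on this chart, a contradiction.
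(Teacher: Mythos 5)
Your Steps 1--3 are correct and essentially parallel the paper's construction: producing a unit of weight $1$, twisting by powers of $1+y$ to make the remaining generators weight~$0$, and checking $\bar y \neq 0$ in $\fm/\fm^2$. Your route to $y \notin \fm^2$ --- Nakayama plus the equivalence (\ref{item:fixedpointscheme})$\iff$(\ref{item:eigenvectors}) of Proposition \ref{prop:fixedpoints} --- is a valid alternative to the paper's one-line observation that $D(\fm^2) \subset \fm$ while $D(y) = 1+y$ is a unit; both work.

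Step 4 has a genuine gap. From $D(y/x_1) = (1+y)/x_1$ you correctly deduce that an extension of $D$ to the chart $B[\fm/x_1]$ would force $1/x_1 \in B[\fm/x_1]$, i.e.\ $x_1$ to be a unit there. But this is not by itself a contradiction: since $\fm\, B[\fm/x_1] = x_1 B[\fm/x_1]$, the statement ``$x_1$ is a unit on this chart'' is \emph{exactly} the statement that the exceptional divisor misses this chart, which happens precisely when the initial form of $x_1$ is nilpotent in $\gr_\fm B$ --- and nothing in your construction rules that out. Concretely, take $B = k[[u,v,y]]/(u^2 - v^3)$ with $D(u)=D(v)=0$, $D(y)=1+y$: all hypotheses hold, your procedure may output $x_1 = u$, and then $u^2 = v^3 = u^3(v/u)^3$ gives $1/u = (v/u)^3 \in B[\fm/u]$, so $B[\fm/u] = B_u$ and $D$ \emph{does} extend to that chart. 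The assertion is still true, but a single chart of this kind cannot witness it. The cheapest repair of your own argument is to run the identical computation on every chart $B[\fm/x_j]$: an extension to the whole blow-up would force every initial form $\bar x_j$ to be nilpotent in $\gr_\fm B$, so $(\gr_\fm B)_{\mathrm{red}}$ would be generated over $k$ by the image of $\bar y$ alone, giving $\dim B = \dim \gr_\fm B \leq 1$ and contradicting $\dim B \geq 2$. (The paper instead works on the single chart $B' = B[x_j/y]_j$ and uses \emph{all} the relations $x_j/y^2 \in B'$ simultaneously to force the locus $y=0$ in $\Spec B'$ to be at most a point, against $\dim B' \geq 2$.)
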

\begin{proof}
Recall that a subset of $\idealm$ generates $\idealm$
if and only if it generates $\idealm/\idealm^2$.

Take elements $x'_1, \dots, x'_m$ generating $\idealm$
and let $x'_j = \sum_{i \in \bF_p} x'_{j,i}$ be the decompositions to eigenvectors.
By assumption there exists a pair $(j,i)$ with $x'_{j,i} \not\in \idealm$. 
We take such $j_0,i_0$ and we may assume $i_0 \neq 0$.
We may assume $x'_{j_0,i_0} - 1 \in \idealm$.
Then $y = x'_{j_0,i_0} - 1 $ satisfies $y \in \idealm$ and $D(y) = i_0 (y + 1)$.
We have $y \not\in \idealm^2$,
since $D(\idealm^2) \subset \idealm$.
By replacing $y$ with $(y+1)^q - 1$ for 
an integer $q$ with $q i_0 \equiv 1 \pmod p$
we may assume $i_0 = 1$.
For each $j$, let $x_j = \sum_i (y+1)^{-i} x'_{j,i}$.
Then we have $D(x_j) = 0$ and, 
since $x_j \equiv x'_j \pmod{(y)}$,
the elements $x_j,y$ generate $\idealm/\idealm^2$ and hence generate $\idealm$.
We can omit one of the $x_j$'s and then the remaining elements satisfies the required conditions
(after renumbering).

To show the latter assertion it suffices to show that $D$ does not extend to $B' := B[x_j/y]_j$.
If it extends then we have $D(x_j/y) = -x_j(y+1)/y^2 \in B'$,
hence $x_j/y^2 \in B'$
and then on $\Spec B'$ we have that $y = 0$ implies $x_j/y = 0$,
which is impossible since $\dim B' \geq 2$.
\end{proof}

Before stating the next proposition we recall the following notion from \cite{Rudakov--Shafarevich:inseparable}. 
	Assume $X$ is a smooth irreducible variety and $D$ is a nontrivial derivation. 
	Then $\Fix(D)$ consists of its divisorial part $\divisorialfix{D}$ and non-divisorial part $\isolatedfix{D}$.
	If we write $D = f \sum_i g_i \partialdd{x_i}$ for some local coordinates $x_1, \dots, x_m$
	with $g_i$ having no common factor,
	then $\divisorialfix{D}$ and $\isolatedfix{D}$ corresponds to the ideal $(f)$ and $(g_i)$ respectively.
If $D$ is of multiplicative type with $\isolatedfix{D} = \emptyset$, 
then it follows from Proposition \ref{prop:fixedpoints}
that for suitable coordinates near any fixed point we have $D = a x_m \cdot (\partial / \partial x_m)$
and that $\Fix(D)$ is a smooth divisor (possibly empty).

Assuming that $\Fix(D)$ is divisorial,
in which case the quotient is a smooth variety by \cite{Seshadri:Cartier}*{Proposition 6},
the highest differential forms on smooth loci of $X$ and $X^D$ are related in the following way.

\begin{prop} \label{prop:2-forms}
Let $X$ be a smooth variety of dimension $m$ (not necessarily proper)
equipped with a nontrivial derivation $D$ of multiplicative type
such that $\Fix(D)$ is divisorial.
Let $\Delta$ be the divisor $\Fix(D)$.
Then there is a unique collection of isomorphisms 
\[
(\pi_* (\Omega^m_{X/k}(\Delta))^{\otimes n})_0
\cong (\Omega^m_{X^D/k}(\pi_*(\Delta)))^{\otimes n}
\]
for all integers $n$,
compatible with multiplication, preserving the zero loci, 
and sending (for $n = 1$)
\[ f_0 \cdot df_1 \wedge \dots \wedge df_{m-1} \wedge d\log(f_m)
\mapsto
   f_0 \cdot df_1 \wedge \dots \wedge df_{m-1} \wedge d\log(f_m^p) \]
if $f_0, \dots, f_{m-1}$ are homogeneous of weight $0$
and $f_m$ is homogeneous of some weight (not necessarily $0$).

In particular, if the action is fixed-point-free,
then we have isomorphisms 
\begin{align*}
(\pi_* (\Omega^m_{X/k})^{\otimes n})_0
&\cong (\Omega^m_{X^D/k})^{\otimes n} 
\quad \text{and} \\
H^0(X, (\Omega^m_{X/k})^{\otimes n})_0 
&\cong H^0(X^D, (\Omega^m_{X^D/k})^{\otimes n})
\end{align*}
with the same properties.
\end{prop}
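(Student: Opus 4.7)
The statement is local on $X^D$ and both sides are coherent sheaves on $X^D$ (which is smooth by the cited result of Seshadri), so the plan is to construct the isomorphism in a formal/étale neighborhood of each closed point $z \in X^D$ and then verify the local constructions patch. The proof naturally splits according to whether $z$ lies in $\pi(\Delta)$ or not, and for tensor powers I would construct the map for $n=1$ first and then define the $n$-th map as the $n$-th tensor power, which automatically gives compatibility with multiplication.

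For $z \in X^D \setminus \pi(\Delta)$, Lemma \ref{lem:good coord} supplies coordinates $x_1,\dots,x_{m-1},y$ on $X$ near $\pi^{-1}(z)$ with $\wt(x_j)=0$ and $\wt(w)=1$ where $w:=1+y$. Then $u:=w^p\in \cO_{X^D}$ and $x_1,\dots,x_{m-1},u$ are coordinates on $X^D$. The decomposition $\cO_X=\bigoplus_{i=0}^{p-1} w^i\cO_{X^D}$ gives, since $\Delta=0$ locally, that $(\pi_*\Omega^m_X)_0$ is the free rank-one $\cO_{X^D}$-module generated by $w^{p-1}\,dx_1\wedge\dots\wedge dx_{m-1}\wedge dw$ (the unique monomial whose weight $(p-1)+1\equiv 0$ survives), while $\Omega^m_{X^D}$ is the free rank-one $\cO_{X^D}$-module on $dx_1\wedge\dots\wedge dx_{m-1}\wedge du$. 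I define the local isomorphism by sending generator to generator; the formula of the proposition is then verified by writing $d\log(w)$ and $d\log(u)=d\log(w^p)$ explicitly in these coordinates.

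For $z\in\pi(\Delta)$, using the normal form $D=ax_m\,\partial/\partial x_m$ with $\wt(x_m)=a\neq 0$ recalled before the proposition, one has $\Delta=(x_m)$ locally, and $x_1,\dots,x_{m-1},x_m^p$ are coordinates on the (smooth) quotient. A similar weight computation shows that $(\pi_*\Omega^m_X(\Delta))_0$ is freely generated by $dx_1\wedge\dots\wedge dx_{m-1}\wedge d\log(x_m)$ (a weight-$0$ form since $d\log(x_m)=dx_m/x_m$ has weight $0$) and $\Omega^m_{X^D}(\pi(\Delta))$ is freely generated by $dx_1\wedge\dots\wedge dx_{m-1}\wedge d\log(x_m^p)$; again I define the map by sending generator to generator, which matches the formula with $f_m=x_m$. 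Both generators are units off the respective divisors, so the isomorphism preserves zero loci.

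The main obstacle is checking coordinate-independence so that the local isomorphisms glue. In the non-fixed case, any other admissible generator is $w'=\alpha w$ with $\alpha\in\cO_{X^D}^\times$; the key computation is that both generators scale by exactly the same factor $\alpha^p$, since on the $X$-side the $w\,d\alpha$-term is killed by wedging with $dx_1\wedge\dots\wedge dx_{m-1}$ (as $d\alpha\in\sum\cO_X\,dx_j$), and on the $X^D$-side $d(\alpha^p u)=\alpha^p\,du$ because $d(\alpha^p)=p\alpha^{p-1}d\alpha=0$ in characteristic $p$. A parallel check works for changes of $x_m$ in the fixed case. Finally, the isomorphism for $n$-th tensor powers is the $n$-th tensor power of the $n=1$ map; this is automatically compatible with multiplication, and when the action is fixed-point-free one has $\Delta=\emptyset$, so only Case~1 occurs and the simpler isomorphisms follow, from which the global-section statement is obtained by taking $H^0$.
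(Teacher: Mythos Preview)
Your approach is the paper's approach: work locally, use the good-coordinate lemmas to obtain a normal form, define $\phi$ by sending a distinguished generator to a distinguished generator, and check that this is independent of coordinates. The paper treats the two cases uniformly via a parameter $\varepsilon\in\{0,1\}$ (writing $D(x_m)=a(\varepsilon+x_m)$ and using $d\log(\varepsilon+x_m)$ as generator), whereas you split them; that is cosmetic, and your generator $w^{p-1}\,dx_1\wedge\cdots\wedge dx_{m-1}\wedge dw$ in the non-fixed case differs from the paper's $dx_1\wedge\cdots\wedge dx_{m-1}\wedge d\log w$ only by the unit $w^p$.

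There are two gaps. First, your coordinate-independence check is incomplete: you only treat $w\mapsto\alpha w$ with $\alpha$ of weight $0$, but a general change of admissible coordinates also replaces the $x_j$ by other weight-$0$ functions $x_j'$, which may depend on $w^p$. This is easy to close---on $X$ one has $d(w^p)=0$, so $dx_j'\in\sum_k\cO_X\,dx_k$ and both sides pick up the same Jacobian $\det(\partial x_j'/\partial x_k)_{j,k\le m-1}$ after wedging with $dw$ (resp.\ $du$)---but it must be said. Second, and more substantively, you have not actually verified the displayed formula for general $f_0,\dots,f_{m-1}$ of weight $0$ and $f_m$ homogeneous of arbitrary weight; you only claim it ``is then verified'' for the coordinate functions themselves. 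This verification is the heart of the paper's proof: writing $f_m=(\varepsilon+x_m)^b g$ with $g$ of weight $0$, one computes on $X$ that $df_1\wedge\cdots\wedge df_{m-1}\wedge d\log g=0$ (since all factors lie in the span of $dx_1,\dots,dx_{m-1}$), and on $X^D$ that $d\log(f_m^p)=b\,d\log(\varepsilon+x_m^p)$ (since $d(g^p)=0$), reducing both sides to $b f_0\det(\partial f_h/\partial x_j)_{h,j\le m-1}$ times the respective generators. In the paper this single computation simultaneously establishes the formula \emph{and} coordinate-independence (any other admissible coordinate system is a choice of such $f_j$'s), so your separate gluing argument becomes unnecessary once you carry it out.
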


\begin{proof}
The isomorphism for $n = 0$ is clear. 
It suffices to construct the isomorphism for $n = 1$ 
that is compatible with multiplication with $n = 0$ forms 
and with restriction to open subschemes.

Take a closed point $w \in X$.
Let $\varepsilon = 1$ (resp.\ $\varepsilon = 0$) if $w \not\in \Delta$ (resp.\ $w \in \Delta$).
By Lemma \ref{lem:good coord} (resp.\ by \cite{Rudakov--Shafarevich:inseparable}*{Theorem 1}),
there are coordinates $x_1, \dots, x_m$ on a neighborhood of $w$
with $D(x_j) = 0$ for $j < m$
and $D(x_m) = a (\varepsilon + x_m)$ for some $a \in \bF_p^*$. 
We define 
\begin{align*}
\phi \colon (\pi_* (\Omega^m_{X/k}(\Delta)))_0
&\to \Omega^m_{X^D/k}(\pi_*(\Delta)) \\
f \cdot dx_1 \wedge \dots \wedge dx_{m-1} \wedge d\log(\varepsilon + x_m)
&\mapsto f \cdot dx_1 \wedge \dots \wedge dx_{m-1} \wedge d\log(\varepsilon + x_m^p)
\end{align*}
for $f$ of weight $0$
(note that $dx_1 \wedge \dots \wedge dx_{m-1} \wedge d\log(\varepsilon + x_m)$ is a local generator of the left-hand side).
We show that then $\phi$ sends
\[
f_0 \cdot df_1 \wedge \dots \wedge df_{m-1} \wedge d\log(f_m)
\mapsto f_0 \cdot df_1 \wedge \dots \wedge df_{m-1} \wedge d\log(f_m^p)
\]
for any $f_0, \dots, f_{m-1}$ and $f_m$ as in the statement.
This implies that $\phi$ does not depend on the choice of the coordinates
and hence that $\phi$ induces a well-defined morphism of sheaves.
Then since $dx_1 \wedge \dots \wedge dx_{m-1} \wedge d\log(\varepsilon + x_m)$ 
(resp.\ $dx_1 \wedge \dots \wedge dx_{m-1} \wedge d\log(\varepsilon + x_m^p)$)
is a local generator of $(\Omega^m_{X/k}(\Delta))_0$ 
(resp.\ $\Omega^m_{X^D/k}(\pi_*(\Delta))$),
it follows that $\phi$ is an isomorphism
and $\phi^{\otimes n}$ are well-defined isomorphisms.

We may pass to the completion, so consider $f_h \in k[[x_1, \dots, x_m]]$.
By the assumption on the weight we have 
$f_h \in k[[x_1, \dots, x_{m-1}, x_m^p]]$ for $h < m$
and $f_m \in (\varepsilon + x_m)^b k[[x_1, \dots, x_{m-1}, x_m^p]]$ for some $0 \leq b < p$.
Then we have $\partial f_h / \partial x_m = 0$ for $h < m$
and $\partial f_m / \partial x_m = b f_m/(\varepsilon + x_m)$.
Hence we have 
\begin{align*}
 & f_0 \cdot df_1 \wedge \dots \wedge df_{m-1} \wedge d\log(f_m) \\
 &= f_0 ((\varepsilon + x_m)/f_m) \det (\partial f_h / \partial x_j)_{1 \leq h,j \leq m} \cdot dx_1 \wedge \dots \wedge dx_{m-1} \wedge d\log(\varepsilon + x_m) \\
 &= b f_0 \det (\partial f_h / \partial x_j)_{1 \leq h,j \leq m-1} \cdot dx_1 \wedge \dots \wedge dx_{m-1} \wedge d\log(\varepsilon + x_m) \\
 &\stackrel{\phi}{\mapsto}
    b f_0 \det (\partial f_h / \partial x_j)_{1 \leq h,j \leq m-1} \cdot dx_1 \wedge \dots \wedge dx_{m-1} \wedge d\log(\varepsilon + x_m^p).
\end{align*}
On the other hand, 
in the invariant subalgebra $k[[x_1, \dots, x_{m-1}, x_m^p]]$
we have $\partial f_m^p / \partial x_j = 0$ for $j < m$ and
$\partial f_m^p / \partial x_m^p = b f_m^p/(\varepsilon + x_m^p)$.
Hence we have 
\begin{align*}
 &  f_0 \cdot df_1 \wedge \dots \wedge df_{m-1} \wedge d\log(f_m^p) \\
 &= \dots 
  = b f_0 \det (\partial f_h / \partial x_j)_{1 \leq h,j \leq m-1} \cdot dx_1 \wedge \dots \wedge dx_{m-1} \wedge d\log(\varepsilon + x_m^p).
\end{align*}
The assertion follows.
\end{proof}
We will give another abstract proof of Proposition \ref{prop:2-forms}
in \cite{Matsumoto:k3alphap}*{Proposition 2.12}.

\subsection{Global properties of derivations}

\begin{lem} \label{lem:derivation on quadratic curve}
	Let $C \subset \bP^2$ be a quadratic curve (not necessarily irreducible nor reduced) in characteristic $p$ 
	and $D$ a $p$-closed derivation.
	Then $\Fix(D) \neq \emptyset$.
\end{lem}
\begin{proof}
	Suppose $C$ is integral. Then $C \cong \bP^1$ and the result is well-known
	(indeed, $T_{\bP^1} \cong \cO_{\bP^1}(2)$).
	
	Suppose $C$ is reducible.
	We may assume $C = (xy = 0)$.
	Let $U = \Spec k[x,y] / (xy)$.
	Then $T_U = ( x \frac{d}{dx} - y \frac{d}{dy}) \cdot \cO_U$
	and hence the origin belongs to $\Fix(D)$.
	
	Suppose $C$ is non-reduced.
	We may assume $C = (X_3^2 = 0)$.
	If $p \neq 2$, then $D$ induces a derivation $D_{\red}$ on $C_{\red} \cong \bP^1$,
	and we have $\Fix(D) \approx \Fix(D_{\red}) \neq \emptyset$.
	Suppose $p = 2$.
	It is easy to see that $H^0(C, T_C) \isomto H^0(C, T_{\bP^2} \restrictedto{C}) \isomfrom H^0(\bP^2, T_{\bP^2})$.
	Hence there exist $f_1, f_2, f_3 \in H^0(\bP^2, \cO(1)) = \bigoplus_{i=1}^3 k X_i$
	such that $D\bigl(\frac{X_i}{X_j}\bigr) = \frac{f_i}{X_j} - \frac{X_i f_j}{X_j^2}$.
	If $f_3 \in k X_3$, then $D$ induces a derivation $D_{\red}$ on $C_{\red} \cong \bP^1$, and we conclude as above.
	Suppose $f_3 \not\in k X_3$. 
	By a coordinate change we may assume $f_3 - X_2 \in k X_3$.
	Letting $x_i = X_i/X_1$ ($i = 2,3$) and restricting to $\Spec k[x_2,x_3]/(x_3^2) = (X_1 \neq 0) \subset C$,
	we have $D(x_3) - x_2 \in (x_3)$,
	in particular $D(x_3) \in \idealm := (x_2, x_3)$.
	If $D(x_2) \in \idealm$ then the origin is a fixed point.
	Suppose $D(x_2) \not\in \idealm$ and $D^2 = h D$.
	Then $h = D^2(x_2) / D(x_2) \in \cO_{\idealm}$, hence 
	$D(x_2) \equiv D^2(x_3) = h D(x_3) \equiv 0 \pmod{\idealm}$, contradiction.
\end{proof}

\begin{cor} \label{cor:fixed point above}
	Suppose $\mu_p$ acts on an RDP surface $X$ and fixes an RDP $w$.
	Then the action extends to the blow-up $\Bl_w X$
	and there exists a fixed point above $w$.
\end{cor}
\begin{proof}
The action extends to the blow-up by Proposition \ref{prop:fixedpoints}.
Let $D'$ be the induced derivation on $\Bl_w X$.
Let $C \subset \Bl_w X$ be the (possibly non-reduced) exceptional divisor,
which is a quadratic curve in $\bP^2$ since $w$ is an RDP.
Since $D'(\cI_C) \subset \cI_C$, 
$D'$ induces a derivation $D'_{C}$ (of multiplicative type) on $C$.
By Lemma \ref{lem:derivation on quadratic curve}, $D'_C$ has at least one fixed point,
and that point is also a fixed point of $D'$.
\end{proof}

Later we will also need the following Katsura--Takeda formula
on \emph{rational} vector fields (i.e.\ derivations on the fraction field $k(X)$).
For a rational derivation $D$ locally of the form $f^{-1} D'$ for some regular function $f$ and (regular) derivation $D'$,
we define the divisorial and non-divisorial parts by 
$\divisorialfix{D} = \divisorialfix{D'} - \divisor(f)$ and $\isolatedfix{D} = \isolatedfix{D'}$.

\begin{prop}[\cite{Katsura--Takeda:quotients}*{Proposition 2.1}] \label{prop:globalvectorfields}
Let $X$ be a smooth proper surface and $D$ a nonzero rational vector field.
Then we have
\[
 \deg c_2(X) = \deg \isolatedfix{D}  - K_X \cdot \divisorialfix{D} - \divisorialfix{D}^2.
\]
\end{prop}

\section{Tame symplectic actions on RDPs} \label{sec:tame}

Hereafter, all action of groups and group schemes on schemes are assumed faithful.

Throughout this section we work under the following setting.
$B = \cO_{X,w}$ is the localization of an RDP surface $X$ over an algebraically closed field $k$ at a closed point $w$ (either a smooth point or an RDP),
$\idealm \subset B$ is the maximal ideal,
$G$ is a finite group acting on $X$, and the action restricts to $\Spec B$.
Assume the order of $G$ is not divisible by $p = \charac k$.

\begin{defn} \label{def:symplectic local tame}
We say that the $G$-action on $B$ is \emph{symplectic}
if it acts on the $1$-dimensional $k$-vector space $H^0(\Spec B \setminus \set{\idealm}, \Omega^2_{B/k}) \otimes_B (B/\idealm)$ trivially.
\end{defn}

If $G = \bZ/p\bZ$, then any action is symplectic (cf.\ Remark \ref{rem:commutative}), hence the notion is useless in this case.

\begin{rem} \label{rem:invariant tame}
If $B$ is as above and the $G$-action is symplectic,
then the rank $1$ free $B$-module $H^0(\Spec B \setminus \set{\idealm}, \Omega^2_{B/k})$
admits a generator $\omega$ that is $G$-invariant.
Indeed, take a generator $\omega'$,
then $\omega := (1 / \card{G}) \sum_{g \in G} g^* \omega'$
is clearly $G$-invariant and it is non-vanishing, since it is non-vanishing after $\otimes (B / \idealm)$.
\end{rem}
\begin{rem} \label{rem:check locally tame}
If $X$ is an RDP K3 surface and $w \in X$ is a fixed closed point,
then this is consistent with the usual notion of symplecticness,
since a generator of $H^0(X^{\sm}, \Omega^2) \cong H^0(\tilde{X}, \Omega^2)$ (Proposition \ref{prop:2-forms of RDP}) restricts to a generator of this $1$-dimensional space.
Thus the symplecticness of an automorphism of an RDP K3 surface can be checked locally at any fixed point (if there exists any).
Same for abelian surfaces.
\end{rem}

\begin{prop} \label{prop:tame}
Let $B$ and $G$ be as above (in particular, the order of $G$ is not divisible by $p = \charac k$).
Then the invariant ring $B^G$ is again the localization at a closed point of an RDP surface.

Let $\tilde{X} \to X$ be the minimal resolution at $w$.
Then $\tilde{X}/G \to X/G$ is crepant.
\end{prop}

\begin{proof}
Let $\omega$ be a generator of the rank $1$ free $B$-module $H^0(\Spec B \setminus \set{\idealm}, \Omega^2_{B/k})$.
By Remark \ref{rem:invariant tame}, we may assume $\omega$ is $G$-invariant.
The action of $G$ on $X$ induces an action on $\tilde{X}$, 
and $\omega$ extends to a regular non-vanishing $2$-form on $\tilde{X}$.
At each closed point $w' \in \tilde{X}$ the stabilizer $G_{w'} \subset G$ acts on $T_{w'} \tilde{X}$ via $\SL_2(k)$ since $G$ preserves $\omega$.
Hence the quotient $\tilde{X}/G$ has only RDPs as singularities. 
Since $\omega$ is preserved by $G$ it induces a regular non-vanishing $2$-form on $(\tilde{X}/G)^\sm$, and
since RDPs are canonical singularities it extends to a regular non-vanishing $2$-form on $\widetilde{\tilde{X}/G}$, the minimal resolution of $\tilde{X}/G$ above $w$.
Thus $B^G$ is a canonical singularity, that is, either a smooth point or an RDP.
\end{proof}

\begin{rem} \label{rem:tame action}
We \cite{Matsumoto:extendability}*{Proposition 3.8} described possible symplectic actions of finite tame groups on RDPs.
For actions of cyclic groups $G = \bZ/n\bZ$ ($n > 1$) we have a complete classification:
possible $n$ and the types of $X$ and $X/G$ are listed in Table \ref{table:tame RDP}.
\begin{table} 
\caption{Tame symplectic cyclic actions on RDPs} \label{table:tame RDP}
\begin{tabular}{llll}
\toprule
$n = \card{G}$ & $X$ &                            & $X/G$ \\
\midrule
any & $A_{m-1}$ &                      & $A_{mn-1}$ \\
$2$ & $A_{m-1}$ & ($m \geq 4$ even)    & $D_{m/2+2}$ \\
$4$ & $A_{m-1}$ & ($m \geq 3$ odd)     & $D_{m+2}$ \\
$3$ & $D_4$     & ($p \neq 2$)         & $E_6$ \\
$3$ & $D_4^r$   & ($p = 2$, $r = 0,1$) & $E_6^r$ \\
$2$ & $D_{m+2}$ &                      & $D_{2m+2}$ \\
$2$ & $E_6$     & ($p \neq 3$)         & $E_7$ \\
$2$ & $E_6^r$   & ($p = 3$, $r = 0,1$) & $E_7^r$ \\
\bottomrule
\end{tabular}
\end{table}
\end{rem}

\begin{rem} \label{rem:wild}
Singularities of quotients by order $p$ automorphisms in characteristic $p > 0$ tends to be worse than those in characteristic $\neq p$.
For example, the quotient of a supersingular abelian surface in characteristic $2$ by the automorphism $x \mapsto -x$
is a rational surface with an elliptic singularity
\cite{Katsura:Kummer2}*{Theorem C}.
\end{rem}

\section{\texorpdfstring{$\mu_n$-actions}{mu\_n-actions} on RDPs and quotients} \label{sec:mu_p RDP}

Throughout this section we work under the following setting.
$B = \cO_{X,w}$ is the localization of an RDP surface $X$ over an algebraically closed field $k$ of characteristic $p \geq 0$ at a closed point $w$ (either a smooth point or an RDP),
$\idealm \subset B$ is the maximal ideal,
$n$ is a positive integer possibly divisible by $p$,
$\mu_n$ acts on $X$, and the action restricts to $\Spec B$.
(Note that $w$ is not necessarily fixed by $\mu_n$.)

If $n = p > 0$, then the corresponding derivation of multiplicative type is denoted by $D$.

\subsection{Symplecticness of \texorpdfstring{$\mu_n$-actions}{mu\_n-actions}}

Assume $w$ is fixed by the $\mu_n$-action.
Then the action on $B$ induces an action on $V := H^0(\Spec B \setminus \set{\idealm}, \Omega^2_{B/k}) \otimes_B (B/\idealm)$,
that is, a decomposition $V = \bigoplus_{i \in \bZ/n\bZ} V_i$ of $k$-vector spaces.
Since $\dim_k V = 1$, $V$ is equal to one of the summands.
In other words, $V$ is of some weight $i_0 \in \bZ/n\bZ$.

\begin{defn} \label{def:symplectic local}
We say that the $\mu_{n}$-action, or the corresponding derivation if $n = p$, on $B$ is \emph{symplectic} if $V$ is of weight $0$.

We say that a $\mu_{n}$-action, or a derivation $D$ of multiplicative type, on an RDP surface $X$ is \emph{symplectic} at a fixed closed point $w$ 
if the induced action or derivation on $\cO_{X,w}$ is symplectic in the above sense.
\end{defn}
\begin{rem} \label{rem:consistency of symplecticness}
If $p \notdivides n$,
then $\mu_n$ is (non-canonically) isomorphic to $\bZ/n\bZ$ and this definition is consistent with Definition \ref{def:symplectic local tame}.
\end{rem}
\begin{rem}[cf.\ Remark \ref{rem:invariant tame}] \label{rem:D=i}
If $B$ is as above and $V$ is of weight $i_0$, 
then the rank $1$ free $B$-module $H^0(\Spec B \setminus \set{\idealm}, \Omega^2_{B/k})$
admits a generator $\omega$ of weight $i_0$.
Indeed, take a generator $\omega'$,
let $\omega' = \sum_{i} \omega'_i$ be its decomposition, 
and write $\omega'_i = f_i \omega'$ with $f_i \in B$.
Since $\sum f_i = 1$, there exists $i_1 \in \bZ/n\bZ$ with $f_{i_1} \in B^*$.
Then $i_0 = i_1$ and hence we can take $\omega = \omega'_{i_1}$.
If $n = p$ then this means $D(\omega) = i_0 \omega$.

From this it follows that if $\mu_{n}$ acts on an RDP surface 
then the weight $i_0$ is a locally constant function on the fixed locus.
\end{rem}
\begin{rem}[cf.\ Remark \ref{rem:check locally tame}] \label{rem:check locally mu_p}
If $X$ is an RDP K3 surface and $w \in X$ is a fixed closed point, 
then the action is symplectic in the sense of Definition \ref{def:symplectic} 
if and only if action is symplectic at $w$,
since a generator of $H^0(X^{\sm}, \Omega^2)$ restricts to a generator of this $1$-dimensional space.
Thus the symplecticness of a $\mu_{n}$-action on an RDP K3 surface can be checked locally at any fixed point (if there exists any).
Same for abelian surfaces.
\end{rem}

\begin{lem} \label{lem:mu_n RDP}
	Suppose the closed point $w$ of $B$ is fixed under the $\mu_n$-action.
	Then 
	$B$ is generated by $2$ or $3$ homogeneous elements
	respectively if $B$ is smooth or an RDP. Moreover,
	\begin{enumerate}
	\item \label{item:mu_n RDP:smooth} 
	If $B$ is smooth and generated by elements $x,y$ of respective weight $a,b$,
	then the action is symplectic if and only if $a + b = 0$ (in $\bZ/n\bZ$).
	\item \label{item:mu_n RDP:RDP}
	If $B$ is an RDP and generated by $x,y,z$ of respective weight $a,b,c$,
	then there is $d \in \bZ/n\bZ$ and a homogeneous power series $F \in k[[x,y,z]]$ of weight $d$ 
	such that $\hat{B} \cong k[[x,y,z]]/(F)$.
	The action is symplectic if and only if $a + b + c = d$.
	\end{enumerate}
\end{lem}
\begin{proof}
The first assertion follows from Proposition \ref{prop:fixedpoints}.

(\ref{item:mu_n RDP:smooth}) 
$H^0(\Spec B \setminus \set{\idealm}, \Omega^2_{B/k})$ is generated by 
$dx \wedge dy$, which is of weight $a+b$.

(\ref{item:mu_n RDP:RDP})
Take an element $H \in k[[x,y,z]]$ such that $\hat{B} = k[[x,y,z]] / (H)$,
and let $H = \sum_{i \in \bZ/n\bZ} H_i$ be the decomposition with respect to the $\mu_n$-action.
Since $H = 0$ in $\hat{B}$, we have $H_i = 0$ in $\hat{B}$, 
hence there are $f_i \in k[[x,y,z]]$ such that $H_i = f_i H$.
Since $\sum f_i = 1$, there exists $d \in \bZ/n\bZ$ with $f_d \in k[[x,y,z]]^*$.
We can take $F = H_d$, which is of weight $d$.

Then $H^0(\Spec \hat{B} \setminus \set{\idealm}, \Omega^2_{\hat{B}/k})$ is generated by 
$\omega = F_x^{-1} dy \wedge dz = F_y^{-1} dz \wedge dx = F_z^{-1} dx \wedge dy$
(this means that the restriction of $\omega$ to the open subscheme $\Spec \hat{B}[F_x^{-1}]$ is equal to $F_x^{-1} dy \wedge dz$, and so on),
and we have $\wt(\omega) = a + b + c - d$ 
since $\wt(F_x^{-1}) = -(d - a)$ and $\wt(dy \wedge dz) = b + c$, and so on.
\end{proof}

\subsection{\texorpdfstring{$\mu_p$-actions}{mu\_p-actions} on RDPs}

As noted in Section \ref{subsec:derivation},
we know by \cite{Seshadri:Cartier}*{Proposition 6} (see also \cite{Rudakov--Shafarevich:inseparable}*{Theorem 1 and Corollary}) 
that the quotient of a smooth variety by a $\mu_p$-action with no isolated fixed point is smooth.
We need to consider, more generally, the quotients of surfaces with RDP singularities and with isolated fixed points.

Let $\cO_{X,w}$ and $\mu_n$ be as in the beginning of Section \ref{sec:mu_p RDP},
and suppose $n = p$.
Let $\pi \colon X \to Y = X/\mu_p$ be the quotient morphism.

\begin{thm} \label{thm:mu_p RDP}
\begin{enumerate}
\item \label{thm:mu_p RDP:non-fixed RDP}
Assume $w$ is non-fixed.
If $w$ is a smooth point then $\pi(w) \in Y$ is also a smooth point.
If $w$ is an RDP then $\pi(w)$ is either a smooth point or an RDP.
In either case $X \times_Y \tilde{Y} \to X$ is crepant, where $\tilde{Y} \to Y$ is the minimal resolution at $\pi(w)$.
\item \label{thm:mu_p RDP:fixed RDP symplectic}
If $w$ is fixed and the action is symplectic at $w$,
then $w$ is an isolated fixed point and $\pi(w)$ is an RDP.

\item \label{thm:mu_p RDP:fixed RDP non-symplectic}
If $w$ is an isolated fixed point and the action is non-symplectic at $w$,
then $\pi(w)$ is a non-RDP singularity.
\end{enumerate}
\end{thm}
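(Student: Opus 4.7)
The plan is to treat the three cases by a combination of local normal forms (Lemma~\ref{lem:good coord} in the non-fixed case; Proposition~\ref{prop:fixedpoints} and the Rudakov--Shafarevich classification in the fixed case), explicit case analysis of possible $\mu_p$-actions on smooth points and RDPs, and descent of the canonical $2$-form generator $\omega$ supplied by Proposition~\ref{prop:2-forms of local RDP}. All three parts will eventually appeal to a classification of local $\mu_p$-actions at $z$ up to formal isomorphism, which is exactly what Tables~\ref{table:non-fixed RDP} and~\ref{table:fixed symplectic RDP} record.

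For part~(\ref{thm:mu_p RDP:non-fixed RDP}), I would apply Lemma~\ref{lem:good coord} at $z$ to obtain formal generators $x_1,x_2,y$ (or $x,y$ when $z$ is smooth) of $\fm$ with $\wt(x_i)=0$ and $\wt(1+y)=1$. If $z$ is smooth, $\hat{A}^{\mu_p}=k[[x,(1+y)^p-1]]$ is immediately regular. If $z$ is an RDP with formal equation $F\in k[[x_1,x_2,y]]$, the existence of the $\mu_p$-action forces $F$ to be weight-homogeneous; enumerating the admissible weight patterns produces Table~\ref{table:non-fixed RDP}, and in each case a direct invariant-ring computation shows $\hat{A}^{\mu_p}$ is either regular or another RDP. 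For the crepant claim, the $\mu_p$-action is free on $X^\sm$ near $z$, so Proposition~\ref{prop:2-forms} produces a weight-zero, non-vanishing $2$-form that descends to a generator of $\Omega^2$ on $Y^\sm$; by Proposition~\ref{prop:2-forms of RDP} this extends to a non-vanishing form on $\tilde{Y}$, and pulling back to $X\times_Y\tilde{Y}$ and comparing with (a weight-zero form associated to) $\omega_X$ exhibits the absence of discrepancy.

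For part~(\ref{thm:mu_p RDP:fixed RDP symplectic}), I first establish that $z$ is isolated. Otherwise a one-dimensional fixed component $C$ passes through $z$; pick a generic $z'\in C$ where $X$ is smooth, which is possible since the RDP locus is zero-dimensional. The Rudakov--Shafarevich normal form for a purely divisorially-fixed derivation of multiplicative type provides local coordinates with $D=a x_2\cdot\partial/\partial x_2$, $a\in\bF_p^\times$, so the local generator $dx_1\wedge dx_2$ has weight $a\neq0$. By Remark~\ref{rem:D=i} the weight $i_0$ of $\omega$ is locally constant on the fixed locus, whence $i_0(z)=a\neq0$, contradicting symplecticity. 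To conclude that $\pi(z)$ is an RDP I would run a classification of symplectic $\mu_p$-actions with isolated fixed point on smooth and on RDP local rings; in the smooth case only $\wt(x,y)=(a,-a)$ with $a\in\bF_p^\times$ can occur, giving $\hat{A}^{\mu_p}\cong k[[u,v,w]]/(w^p-uv)=A_{p-1}$. The RDP cases are treated one by one and recorded in Table~\ref{table:fixed symplectic RDP}; this classification is the main technical content.

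For part~(\ref{thm:mu_p RDP:fixed RDP non-symplectic}), I would argue by contradiction. Assume $\pi(z)$ is an RDP. By Proposition~\ref{prop:2-forms of local RDP} there is a generator $\eta$ of the rank-one $\cO_{Y,\pi(z)}$-module $H^0(\Spec\cO_{Y,\pi(z)}\setminus\{\fm_Y\},\Omega^2)$. Pulling back and writing $\pi^*\eta=f\omega_X$ in the rank-one $A$-module $H^0(\Spec A\setminus\{\fm\},\Omega^2)$ forces $\wt(f)=-i_0\neq0$, hence $f\in A_{-i_0}\subset\fm$ by Proposition~\ref{prop:fixedpoints}, so $\pi^*\eta$ vanishes at $z$. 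Combined with the fact that $\eta$ is a generator at $\pi(z)$ (so that $\pi^*\eta$ should have prescribed order of vanishing along any ``ramification divisor''), a canonical-index calculation ought to yield the desired contradiction. The main obstacle will be making this step fully rigorous: pullback of differentials along a purely inseparable $\mu_p$-quotient can behave subtly, and one must rule out the possibility that $\pi^*\eta$ is identically zero or that compatible ramification effects absorb the vanishing. A dependable fallback is to extend the classification from part~(\ref{thm:mu_p RDP:fixed RDP symplectic}) to all non-symplectic $\mu_p$-actions with isolated fixed point and verify case-by-case that the invariant ring is never an RDP (indeed typically has embedding dimension exceeding three, or carries a worse singularity in the spirit of Remark~\ref{rem:wild}).
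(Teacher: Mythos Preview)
Your plan for part~(\ref{thm:mu_p RDP:non-fixed RDP}) matches the paper's: Lemma~\ref{lem:good coord} reduces to $F\in k[[x,y,z^p]]$, the possibilities are enumerated as in Table~\ref{table:non-fixed RDP}, and the quotient and crepancy claims are read off line by line.

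For part~(\ref{thm:mu_p RDP:fixed RDP symplectic}) your isolatedness argument is the paper's. However, to conclude that $\pi(z)$ is an RDP you propose to invoke the classification in Table~\ref{table:fixed symplectic RDP} and check the quotient column case by case. The paper takes a different, more conceptual route and proves the theorem \emph{before} establishing that classification. Lemma~\ref{lem:mu_p RDP} shows that blowing up a fixed RDP yields an RDP surface on which the extended derivation is again symplectic at every fixed point, and that the induced map of quotients $Y'\to Y$ is crepant. Iterating until all fixed points are smooth, the terminal quotient has only $A_{p-1}$ singularities (the smooth-point case of Lemma~\ref{lem:mu_p RDP}), and crepancy of the tower forces $Y$ itself to be canonical at $\pi(z)$. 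Your route is valid but makes the theorem depend on the full case analysis of Proposition~\ref{prop:list of fixed symplectic RDP}; the paper's does not.

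For part~(\ref{thm:mu_p RDP:fixed RDP non-symplectic}) your primary argument has the gap you yourself flag, and it is fatal as stated: since $\pi$ is purely inseparable of degree $p$, the naive pullback $\pi^*$ on $2$-forms is identically zero (locally $\pi^*(du\wedge dv)=d(x^p)\wedge dv=0$), so ``$\pi^*\eta=f\omega_X$'' gives $f=0$ rather than $f\in\fm\setminus\{0\}$, and no contradiction follows. The paper's fix is to avoid pullback entirely. On the punctured local scheme $\Spec\cO_{X,z}\setminus\{z\}$ the action is free (by the isolated-fixed-point hypothesis) and the scheme is smooth, so Proposition~\ref{prop:2-forms} applies there and supplies an \emph{isomorphism}
\[
\bigl(H^0(\Spec\cO_{X,z}\setminus\{z\},\Omega^2)\bigr)_0 \;\cong\; H^0(\Spec\cO_{Y,\pi(z)}\setminus\{\pi(z)\},\Omega^2)
\]
preserving zero loci, given explicitly by $f_0\,df_1\wedge d\log f_2\mapsto f_0\,df_1\wedge d\log(f_2^p)$. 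If $\pi(z)$ were smooth or an RDP, the right side would contain a nowhere-vanishing form (Proposition~\ref{prop:2-forms of local RDP}); its preimage would be a nowhere-vanishing weight-$0$ element on the left, hence a generator of the full rank-one module $H^0(\Spec\cO_{X,z}\setminus\{z\},\Omega^2)$, forcing $i_0=0$ and contradicting non-symplecticity. This isomorphism from Proposition~\ref{prop:2-forms}, not $\pi^*$, is the missing ingredient; with it the fallback classification is unnecessary.
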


First we consider non-symplectic actions on isolated fixed points.
\begin{proof}[Proof of Theorem \ref{thm:mu_p RDP}(\ref{thm:mu_p RDP:fixed RDP non-symplectic})]
	By Proposition \ref{prop:2-forms},
	we have an isomorphism 
	\[ (H^0(\Spec \cO_{X,w} \setminus \set{w}, \Omega^2))_0 \cong 
	H^0(\Spec \cO_{Y,\pi(w)} \setminus \set{\pi(w)}, \Omega^2) \]
	preserving the zero loci of $2$-forms.
	If $\pi(w)$ is either a smooth point or an RDP then the right hand side has a non-vanishing $2$-form 
	and hence there is a non-vanishing form $\omega$ on $\Spec \cO_{X,w} \setminus \set{w}$ of weight $0$.
	Being non-vanishing, $\omega$ is a generator of $H^0(\Spec \cO_{X,w} \setminus \set{w}, \Omega^2)$.
	But this contradicts the non-symplecticness assumption.
\end{proof}

Next we consider non-fixed points.
In fact, we can classify all possible actions and give explicit equations.
\begin{prop} \label{prop:non-fixed mu_p RDP}
Assume $w$ is not fixed.
	\begin{itemize}
\item
	If $w$ is a smooth point, then there are coordinates $x,y$ of $\cO_{X,w}$
	satisfying $D(x) = 0$ and $D(y) \neq 0$,
	hence $\cO_{Y,\pi(w)}$ has $x,y^p$ as coordinates and in particular $\pi(w)$ is a smooth point.
\item
	If $w$ is an RDP, then there is an element $F \in k[[x,y,z^p]]$ and 
	an isomorphism $\hat{\cO}_{X,w} \cong k[[x,y,z]] / (F)$
	with $D(x) = D(y) = 0$ and $D(z) \neq 0$,
	hence $\hat{\cO}_{Y,\pi(w)} \cong k[[x,y,z^p]] / (F)$.
	Moreover we can take $F$ to be one in Table \ref{table:non-fixed RDP}.
\end{itemize}
\end{prop}
\begin{table} 
\caption{Non-fixed $\mu_p$-actions on RDPs} \label{table:non-fixed RDP} 
\begin{tabular}{llllll} 
\toprule
 $p$ & equation            &              & $X$          & $Y = X^D$ & $X \times_Y \tilde{Y}$ \\ 
\midrule
 any & $xy + z^{mp}$       & ($m \geq 2$) & $A_{mp-1}$   & $A_{m-1}$ & $mA_{p-1}$ \\
 any & $xy + z^{p}$        &              & $A_{p-1}$    & smooth    & --- \\
\midrule
 $5$ & $x^2 + y^3 + z^5$   &              & $E_8^0$      & smooth    & --- \\
\midrule
 $3$ & $x^2 + z^3 + y^4$   &              & $E_6^0$      & smooth    & --- \\
 $3$ & $x^2 + y^3 + yz^3$  &              & $E_7^0$      & $A_1$     & $E_6^0$  \\
 $3$ & $x^2 + z^3 + y^5$   &              & $E_8^0$      & smooth    & --- \\
\midrule
 $2$ & $z^2 + x^2y + xy^m$ & ($m \geq 2$) & $D_{2m}^0$   & smooth    & --- \\
 $2$ & $x^2 + yz^2 + xy^m$ & ($m \geq 2$) & $D_{2m+1}^0$ & $A_1$     & $D_{2m}^0$  \\
 $2$ & $x^2 + xz^2 + y^3$  &              & $E_6^0$      & $A_2$     & $D_4^0$  \\
 $2$ & $z^2 + x^3 + xy^3$  &              & $E_7^0$      & smooth    & --- \\
 $2$ & $z^2 + x^3 + y^5$   &              & $E_8^0$      & smooth    & --- \\
\bottomrule
\end{tabular} 

\end{table}

\begin{proof}[Proof of Theorem \ref{thm:mu_p RDP}(\ref{thm:mu_p RDP:non-fixed RDP}) and Proposition \ref{prop:non-fixed mu_p RDP}]
If $w$ is a smooth point
then taking coordinates $x,y$ as in Lemma \ref{lem:good coord} (i.e.\ $D(x) = 0$ and $D(y) = 1+y$)
we have $\hat \cO_{Y,\pi(w)} \cong k[[x,y^p]]$,
hence $\cO_{Y,\pi(w)}$ is smooth.

Assume $w$ is an RDP.
By Lemma \ref{lem:good coord} we have coordinates $x,y,z$ satisfying $D(x) = D(y) = 0$ and $D(z) \neq 0$.
We have $\hat{\cO}_{X,w} \cong k[[x,y,z]] / (F)$
for some $F \in k[[x,y,z]]$ such that $D(F) \in (F)$, and we may assume $F \in k[[x,y,z^p]]$.
We show that, 
after replacing $F$ with a multiple by a unit,
and after a coordinate change of $k[[x,y,z]]$ that preserves the subring $k[[x,y,z^p]]$,
$F$ coincides with one in Table \ref{table:non-fixed RDP}.
(Such coordinate changes are given by $x',y',z' \in \idealm$ that are linearly independent in $\idealm/\idealm^2$ and satisfy $x',y' \in \idealm \cap k[[x,y,z^p]]$.)
A similar classification is given in \cite{Ekedahl--Hyland--Shepherd-Barron}*{Proposition 3.8}, but they missed the case of $E_7^0$ in characteristic $3$.

Assume the classification for the moment. 
Then in each case we observe that $\pi(w)$ is either a smooth point or an RDP,
and it is straightforward to check that $X \times_Y \tilde{Y}$ is an RDP surface crepant over $X$.
(In Table \ref{table:non-fixed RDP}, the entries of the singularities of $X \times_Y \tilde{Y}$ is omitted if $Y$ is already smooth.)
For example, consider $X = \Spec k[x,y,z]/(F)$, $F = xy + z^{mp}$ with $m \geq 2$.
Then $X' := X \times_Y \Bl_{\pi(w)} Y$ is covered by three affine pieces 
\begin{align*}
X'_1 &= \Spec k[x, y_1, v_1, z] / (y_1 + x^{m-2} v_1^{m}, x v_1 - z^p), & y_1 &= y/x,   & v_1 &= z^p/x,   \\
X'_2 &= \Spec k[x_2, y, v_2, z] / (x_2 + y^{m-2} v_2^{m}, y v_2 - z^p), & x_2 &= x/y,   & v_2 &= z^p/y,   \\
X'_3 &= \Spec k[x_3, y_3, z]    / (x_3 y_3 + z^{(m-2)p}),               & x_3 &= x/z^p, & y_3 &= y/z^p. 
\end{align*}
One observes that $\Sing(X')$ consists of two RDPs of type $A_{p-1}$ at the origins of $X'_1$ and $X'_2$ and, if $m \geq 3$, one RDP of type $A_{(m-2)p-1}$ at the origin of $X'_3$.
Repeating this we observe that $X \times_Y \tilde{Y}$ has $m A_{p-1}$.

\medskip

Now we show the classification.
We say that $F$ \emph{has} a monomial if the coefficient of that monomial is nonzero.
We also write $F = \sum_{h,i,j} a_{hij} x^h y^i z^j$.

First assume $p > 2$.
We may assume that the degree $2$ part $F_2$ is either $xy$ or $x^2$.
Assume $F_2 = xy$.
We may assume $F$ has no $xz^{j}$ and $yz^{j}$. $F$ must have $z^{j}$, $j = mp$, and then it is $A_{mp-1}$.
Then, by replacing $x$ with $x + a_{0ij} y^{i-1} z^j$ and $y$ with $y + a_{h0j} x^{h-1} z^j$, and so on,
we may assume $F$ has no $y^i z^j$ with $i > 0$ and no $x^h z^j$ with $h > 0$.
Thus $F = u_1 xy + u_2 z^{mp}$ for some units $u_1, u_2$, and then by replacing $x,y,F$ by suitable multiples we obtain $F = xy + z^{mp}$.

Assume $p > 3$ and $F_2 = x^2$. We may assume that the degree $3$ part $F_3$ is $y^3$. 
If $p \geq 7$ it cannot be an RDP.
If $p = 5$ then $F$ must have $z^5$ and then it is $E_8^0$.
We have $F = u_1 x^2 + u_2 y^3 + u_3 z^5$, and then by replacing $x,y,F$ by suitable multiples, 
we obtain $F = x^2 + y^3 + z^5$.
(For example, we let $F = u_3 F'$, $x = (u_3 u_1^{-1})^{1/2} x'$, $y = (u_3 u_2^{-1})^{1/3} y'$.
Note that we can take $n$-th roots of units provided $p \notdivides n$.)

Assume $p = 3$ and $F_2 = x^2$.
We may assume $F_3 = y^3$ or $F_3 = z^3$.
If $F_3 = z^3$ then $F$ must have $y^4$ or $y^5$ and then it is $E_6^0$ or $E_8^0$.
We may assume $a_{130} = a_{140} = 0$ by replacing $x$ with $x + (1/2)(a_{130} y^3 + a_{140} y^4)$, and then we transform $F$ as above.
If $F_3 = y^3$ then $F$ must have $yz^3$ and then it is $E_7^0$.
We eliminate $a_{1ij}$ as above, then we have $F = u_1 x^2 + u_2 y^3 + u_3 yz^3 + z^6 g(z^3)$ for some power series $g \in k[[z^3]]$. 
We may assume $u_i \equiv 1 \pmod{\idealm}$.
We eliminate $g$ by replacing $y$ with $y + z^3 g$, and then we transform $F$ as above.

Now consider $p = 2$.
We may assume $F_2$ is one of $xy + z^2$ (if irreducible), $xy$ (if reducible but not a square), $z^2$, or $x^2$ (square, of a linear factor containing $z$ or not).
If $F_2 = xy + z^2$ or $F_2 = xy$, then as above it is $A_{mp-1}$ and $F$ becomes $xy + z^{mp}$. 

Assume $p = 2$ and $F_2 = x^2$.
If $F_3$ has $yz^2$ then $F$ must have $xy^m$ and then it is $D_{2m+1}^0$.
We obtain $F = u_1 x^2 + u_2 y z^2 + u_3 x y^m + z^4 g(z) + f(y) + y^{2m} g'(y)$, 
where $f(y) = f_0(y)^2 + y f_1(y)^2$ is a polynomial of degree $< 2m$,
$g(z) \in k[[z]]$, and $g'(y) \in k[[y]]$.
We may assume $u_i \equiv 1 \pmod{\idealm}$.
We eliminate $f$ by replacing $x$ with $x + f_0(y)$ and $z$ with $z + f_1(y)$ and so on.
Then we eliminate $g$ and $g'$ by replacing $y$ and $x$ suitably, and take multiples by units as above.

If $F_3$ has no $yz^2$ then $F$ must have $y^3$ and $xz^2$ and then it is $E_6^0$.
We obtain $F = u_1 x^2 + u_2 y^3 + u_3 x z^2 + a y^2 z^2 + z^4 g$, $g = g_0(z^2) + y g_1(z^2) + y^2 g_2(z^2)$.
We may assume $u_i \equiv 1 \pmod{\idealm}$.
We eliminate $a$ and $g$ by replacing $y$ and $x$ suitably, and then we transform $F$ as above.

Assume $p = 2$ and $F_2 = z^2$.
Let $\overline{F_3} = (F_3 \bmod (z)) \in k[[x,y]]$.
If $\overline{F_3}$ has three distinct roots then we may assume $\overline{F_3} = x^3 + y^3$ and then it is $D_4^0$.
We can transform $F$ to $z^2 + x^3 + y^3$ as above, and then to $z^2 + x^2 y + x y^2$.
If $\overline{F_3}$ has two distinct roots then we may assume $\overline{F_3} = x^2y$ and $F$ must have $xy^m$ and then it is $D_{2m}^0$.
We obtain $F = u_1 z^2 + u_2 x^2 y + u_3 x y^m + g(x) + f(y) + y^{2m-1} g'(y)$,
where $f(y) = f_0(y)^2 + y f_1(y)^2$ is a polynomial of degree $< 2m-1$
and $g \in k[[x]]$ and $g' \in k[[y]]$.
We argue as in the case of $D_{2m+1}^0$.
If $\overline{F_3}$ has one (triple) root then we may assume $\overline{F_3} = x^3$ and $F$ must have $xy^3$ or $y^5$ and then it is $E_7^0$ or $E_8^0$.
We transform $F$ as above.
\end{proof}

Next we consider symplectic actions on fixed points.
\begin{lem} \label{lem:mu_p RDP}
Assume $w$ is fixed, and the action is symplectic at $w$.
\begin{enumerate}
\item \label{lem:mu_p RDP:fixed smooth symplectic}
Assume $w$ is a smooth point.
Then $w$ is an isolated fixed point and
$\pi(w)$ is an RDP of type $A_{p-1}$.
The eigenvalues of $D$ on the cotangent space $\idealm_w / \idealm_w^2$ are of the form $a,-a$ for some $a \in \bF_p^*$.
\item \label{lem:mu_p RDP:fixed RDP symplectic}
Assume $w$ is an RDP.
Let $\map{f}{X' = \Bl_w X}{X}$.
Then $X'$ is an RDP surface, 
$D$ uniquely extends to a derivation $D'$ on $X'$ which is symplectic at every fixed point above $w$,
and $g \colon Y' = (X')^{D'} \to Y$ is crepant.
\end{enumerate}
\end{lem}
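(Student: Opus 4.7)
For part~(1), by Proposition~\ref{prop:fixedpoints} we can take homogeneous generators $x, y$ of $\fm_z$ of weights $a, b \in \bF_p$. Since $dx \wedge dy$ is a generator of $\Omega^2$ near $z$ of weight $a+b$, symplecticness forces $b = -a$. Since $D$ is nontrivial on $\cO_{X,z}$, we must have $a \neq 0$, so $a \in \bF_p^*$; then both weights are nonzero and the fixed subscheme is exactly $\{z\}$, so $z$ is isolated. To identify $\pi(z)$, exhibit the invariant subring as $k[[x,y]]^{\mu_p} = k[[x^p, y^p, xy]]$ subject to $(x^p)(y^p) = (xy)^p$, which is the $A_{p-1}$ singularity.

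For part~(2), the existence of $D'$ on $X' = \Bl_z X$ is the last assertion of Proposition~\ref{prop:fixedpoints}, and uniqueness follows since $g_X \colon X' \to X$ is birational and proper, so $D'$ is determined on a dense open. For symplecticness at any fixed point $z' \in g_X^{-1}(z)$, I would take a weight-$0$ generator $\omega \in H^0(\Spec A \setminus \set{\fm}, \Omega^2_{A/k})$ (available by Remark~\ref{rem:D=i}); by Proposition~\ref{prop:2-forms of local RDP}, $g_X^* \omega$ is a generator of the corresponding rank-one module at $z'$. Pullback preserves the weight grading, so this generator has weight $0$, which is the symplecticness at $z'$.

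The crepancy of $g : Y' \to Y$ is the main technical step. My plan is to apply Proposition~\ref{prop:2-forms} on the smooth loci of $X$ and $X'$ away from their isolated fixed points: $\omega$ descends to a nowhere-vanishing $2$-form $\omega_Y$ on a dense open of $Y$, and $g_X^* \omega$ descends to $\omega_{Y'}$ on the corresponding open of $Y'$. Functoriality of the weight-$0$ descent (both descents are simply taking $\mu_p$-invariants at the level of sheaves) gives $g^* \omega_Y = \omega_{Y'}$ on a dense open of $Y'$, and then the normality of $Y, Y'$ (as $\mu_p$-quotients of normal surfaces) together with a codimension argument yields $g^* K_Y = K_{Y'}$ as Weil divisors. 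The delicate point is that Proposition~\ref{prop:2-forms} requires smoothness with only divisorial singularity, so one must carefully keep track of the open loci where the descent is valid and use normality to extend the divisor identity across the finite set of isolated-fixed-point images, where $Y'$ itself may yet have singularities that are only to be controlled inductively (via Theorem~\ref{thm:mu_p RDP}).
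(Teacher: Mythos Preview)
Your plan is correct and mirrors the paper's proof closely: part~(1) via diagonalizing $D$ at a smooth fixed point and reading off $a+b=0$ from the weight of $dx\wedge dy$, then computing the invariant ring; part~(2) via extending $D$ by Proposition~\ref{prop:fixedpoints}, propagating the weight-$0$ generator $\omega$ through the blow-up using Proposition~\ref{prop:2-forms of local RDP} to get symplecticness upstairs, and descending $\omega$ on each side via Proposition~\ref{prop:2-forms} to compare $2$-forms on $Y$ and $Y'$ for crepancy. One small caution: do not invoke Theorem~\ref{thm:mu_p RDP} here, even parenthetically, since this lemma is an input to that theorem; the crepancy argument needs only that the fixed points of $D'$ above $z$ are isolated (which you get from Remark~\ref{rem:D=i} plus part~(1)), not any knowledge of what the singularities of $Y'$ turn out to be.
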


\begin{proof}%[Proof of Lemma \ref{lem:mu_p RDP}]
(\ref{lem:mu_p RDP:fixed smooth symplectic})
By Lemma \ref{lem:mu_n RDP}(\ref{item:mu_n RDP:smooth}),
we have $D = a x \cdot (\partial / \partial x) - a y \cdot (\partial / \partial y)$ with $a \in \bF_p$ 
for some coordinates $x,y$,
and $a \neq 0$ since $D$ is nontrivial. 
Hence $w$ is an isolated fixed point of $D$.
We observe that $a,-a$ are the eigenvalues of the action on the cotangent space.
We have $\hat{\cO}_{X,w}^D = k[[x^p, xy, y^p]]$ and it is an RDP of type $A_{p-1}$.

(\ref{lem:mu_p RDP:fixed RDP symplectic})
By Remark \ref{rem:D=i} and assertion (\ref{lem:mu_p RDP:fixed smooth symplectic}), $w$ is an isolated fixed point.
By Proposition \ref{prop:fixedpoints}, $D$ uniquely extends to $D'$ on $X'$.
Let $\omega$ be a generator of $H^0(\Spec \cO_{X,w} \setminus \set{w}, \Omega^2)$ with $D(\omega) = 0$.
Since $w$ is an RDP, $X'$ is again an RDP surface,
and it follows from Proposition \ref{prop:2-forms of RDP} that $\omega$ extends to $\omega'$ on $(X')^{\sm}$
which generates $H^0(\Spec \cO_{X',w'} \setminus \set{w'}, \Omega^2)$ at any closed point $w' \in X'$ above $w$,
and that $D'(\omega') = 0$.
Hence $D'$ is symplectic at every fixed point above $w$.
Since as above such fixed points are isolated,
$Y'$ is smooth outside finitely many isolated points.
Applying Proposition \ref{prop:2-forms} to $\omega$ on $X \setminus \set{w}$ and $\omega'$ on $X' \setminus (\Sing(X') \cup \Fix(D'))$,
we obtain $2$-forms $\psi$ on $Y \setminus \set{\pi(w)}$ and $\psi'$ on $Y' \setminus \pi((\Sing(X') \cup \Fix(D')))$,
which are non-vanishing.
Comparing $\psi$ and $\psi'$ we observe that $g$ is crepant.
\end{proof}

\begin{proof}[Proof of Theorem \ref{thm:mu_p RDP}(\ref{thm:mu_p RDP:fixed RDP symplectic})]
By Remark \ref{rem:D=i} and Lemma \ref{lem:mu_p RDP}(\ref{lem:mu_p RDP:fixed smooth symplectic}),
$w$ is an isolated fixed point.
By shrinking $X$ we may assume that $D$ has no fixed point except $w$.

We construct a finite sequence $(X_j,D_j)_{0 \leq j \leq n}$ ($n \geq 0$)
of RDP surfaces $X_j$ and derivations $D_j$ on $X_j$ of multiplicative type
that is symplectic at each fixed point.
Let $(X_0,D_0) = (X,D)$.
If $X_j$ has no fixed RDP then we terminate the sequence at $n = j$.
If $X_j$ has at least one fixed RDP,
let $X_{j+1}$ be the blow-up of $X_j$ at the fixed RDPs
and $D_{j+1}$ the extension of $D_j$ to $X_{j+1}$.
Since any RDP becomes smooth after a finite number of blow-ups at RDPs,
this sequence terminates at some $n \geq 0$.
By Lemma \ref{lem:mu_p RDP}(\ref{lem:mu_p RDP:fixed RDP symplectic}),
$D_{j+1}$ on $X_{j+1}$ is symplectic at each fixed point,
and $(X_{j+1})^{D_{j+1}} \to (X_j)^{D_j}$ is crepant.
By Theorem \ref{thm:mu_p RDP}(\ref{thm:mu_p RDP:non-fixed RDP}) and Lemma \ref{lem:mu_p RDP}(\ref{lem:mu_p RDP:fixed smooth symplectic}),
$Y_n = (X_n)^{D_n}$ has canonical singularity (i.e.\ has no singularity other than RDPs),  
and since $Y_n \to Y = X^D$ is crepant,
also $Y$ has canonical singularity.
If $n > 0$ then $\pi(w)$ is not a smooth point since $Y_n \to Y$ is a crepant morphism non-isomorphic at that point,
and if $n = 0$ then $\pi(w)$ is not a smooth point by Lemma \ref{lem:mu_p RDP}(\ref{lem:mu_p RDP:fixed smooth symplectic}).
Hence in either case $\pi(w)$ is an RDP.
\end{proof}

Moreover, we can classify all possible symplectic $\mu_p$-actions on RDPs.
\begin{prop} \label{prop:list of fixed symplectic RDP}
Assume $w$ is a fixed RDP, and the action is symplectic at $w$.
Then there is a $\mu_p$-equivariant isomorphism $\hat{\cO}_{X,w} \cong k[[x,y,z]]/(F)$
with $F$ equal to one in Table \ref{table:fixed symplectic RDP}
and $\mu_p$ acts on $x,y,z$ by respective weights $a,-a,0$ for some $a \in \bF_p^*$.
The singularities of $X$, $X' = \Bl_w X$, $X/\mu_p$, and $X'/\mu_p$ are displayed in the table.
\end{prop}

\begin{table} 
\caption{Symplectic $\mu_p$-actions on RDPs} \label{table:fixed symplectic RDP} 
\begin{tabular}{lllll}
\toprule
$p$ & equation                     &              & $X$       & $X/\mu_p$ \\ 
\midrule
any & --- (smooth point)           &              & $A_0$     & $A_{p-1}$   \\ 
\midrule
any & $xy + z^m$                   & ($m \geq 2$) & $A_{m-1}$ & $A_{mp-1}$  \\ 
\midrule
$3$ & $z^2 + x^3 + y^3 + x^2y^2$   &              & $E_6^1$   & $E_6^1$     \\ 
$3$ & $z^2 + x^3 + y^3 + x^4y$     &              & $E_8^1$   & $E_6^0$     \\ 
\midrule
$2$ & $x^2 + y^2z + xyz^k + z^l $  & ($k \geq 1$, $l \geq 2$) & $D_{2k+l}^{\floor{l/2}}$ & $D_{2l}^{\positive{l-k}}$   \\
$2$ & $x^2 + z^2 + xyz + y^{2n-2}$ & ($n \geq 3$)             & $D_{2n}  ^{n-1}\another$ & $D_{n+2} ^{\floor{n/2}}   $ \\ 
$2$ & $x^2 + z^2 + z^3 + xy^3$     &                          & $E_7^2$                  & $D_5^0$ \\ 
$2$ & $x^2 + z^3 + y^4 + xyz$      &                          & $E_7^3$                  & $E_7^3$ \\ 
$2$ & $x^2 + z^3 + y^4 + xy^3$     &                          & $E_8^3$                  & $E_7^2$ \\ 
\bottomrule
\end{tabular}

\begin{tabular}{lllll}
\toprule
$p$ & $X$       & $X'$                    & $X/\mu_p$ & $X'/\mu_p$ \\ 
\midrule
any & $A_0$     & ---                     & $A_{p-1}$  & --- \\ 
\midrule
any & $A_{m-1}$ ($m \geq 3$) & $A_{m-3} + 2 A_0$       & $A_{mp-1}$ & $A_{(m-2)p-1} + 2 A_{p-1}$ \\ 
any & $A_{1}  $ & $          2 A_0$       & $A_{2p-1}$ & $               2 A_{p-1}$ \\ 
\midrule
$3$ & $E_6^1$   & $A_5\notfixed + 2A_0$   & $E_6^1$    & $A_1 + 2A_2$ \\ 
$3$ & $E_8^1$   & $E_7^{0}\notfixed + 2A_0$ & $E_6^0$    & $A_1 + 2A_2$ \\ 
\midrule
$2$ & $D_{2k+l}^{\floor{l/2}}$  ($l \geq 4$)          & $D_{2k+l-2}^{\floor{l/2}-1} + A_1$ & $D_{2l}^{\positive{l-k}}$ & $D_{2(l-2)}^{\positive{l-2-k}} + A_3$ \\
    &             \quad         ($l = 3$, $k \geq 2$) & $D_{2k+1}^0 \notfixed + A_0 + A_1$ & $D_{ 6}^{\positive{3-k}}$ & $A_1 + A_1 + A_3$ \\
    &             \quad         ($l = 3$, $k = 1$)    & $A_3        \notfixed + A_0 + A_1$ & $D_{ 6}^{           {2}}$ & $A_1 + A_1 + A_3$ \\
    &             \quad         ($l = 2$, $k \geq 2$) & $D_{2k}^0   \notfixed       + A_1$ & $D_{ 4}^{           {0}}$ & $            A_3$ \\
    &             \quad         ($l = 2$, $k = 1$)    & $2 A_1      \notfixed       + A_1$ & $D_{ 4}^{           {1}}$ & $            A_3$ \\
$2$ & $D_{2n}  ^{n-1}\another$  ($n \geq 3$)     & $D_{2n-2}^{n-2}\another + A_1\notfixed$ & $D_{n+2} ^{\floor{n/2}} $ & $D_{n+1}^{\floor{(n-1)/2}}$ \\ 
$2$ & $E_7^2$                   & $D_6^1              $                     & $D_5^0$ & $D_4^0$ \\ 
$2$ & $E_7^3$                   & $D_6^2\another + A_0$                     & $E_7^3$ & $D_5^1 + A_1$ \\ 
$2$ & $E_8^3$                   & $E_7^2 + A_0        $                     & $E_7^2$ & $D_5^0 + A_1$ \\ 
\bottomrule
\end{tabular}
\begin{itemize}
\item $A_0$ is a smooth point that is an isolated fixed point of $D$.
\item $\notfixed$ means that the RDP is not fixed by $D$.
\item $\floor{q} := \max\set{n \in \bZ \mid n \leq q}$ denotes the integer part of a real $q$.
\item $q^+ := \max\set{q,0}$ denotes the positive part of a real $q$.
\item $\another$: It follows from the classification that for each (formal) isomorphism class of RDP there exists only one fixed symplectic $\mu_p$-action up to isomorphism,
except for the case of $D_{2n}^{n-1}$ ($n \geq 3$) in $p = 2$,
in which case there are two and they are distinguished by the degree $2$ part $F_2$ being a square of a homogeneous element or not.
We distinguish them by notation $D_{2n}^{n-1}$ and $D_{2n}^{n-1}\another$.
We use the convention that $D_4^{1}\another = D_4^{1}$.
\end{itemize}
\end{table}

\begin{rem}
A polynomial $f \in k[x_1, \dots, x_m]$ is called \emph{quasi-homogeneous}
if, for some $a_1, \dots, a_m \in \bZ_{\geq 1}$, 
the monomials appearing in $f$ have the same degree with respect to $a$ (i.e.\ degree of the monomial $x_1^{i_1} \dots x_m^{i_m}$ is $i_1 a_1 + \dots + i_m a_m$).
RDPs whose completions are \emph{not} defined by quasi-homogeneous polynomials, 
which exist only if $p = 2,3,5$,
are precisely $D_n^r$ and $E_n^r$ with $r \neq 0$.
It follows from the classification given in Proposition \ref{prop:list of fixed symplectic RDP} 
(resp.\ given in Proposition \ref{prop:non-fixed mu_p RDP}, resp.\ which is omitted) that
if an RDP of type $D_n$ or $E_n$ admits a fixed symplectic (resp.\ non-fixed, resp.\ fixed non-symplectic) $\mu_p$-action
then the singularity is not defined (resp.\ is defined, resp.\ is defined) by a quasi-homogeneous polynomial.
We do not know any explanation of this phenomenon.
\end{rem}

\begin{proof}[Proof of Proposition \ref{prop:list of fixed symplectic RDP}]
We consider tuples $(a,b,c,F)$ with $a,b,c \in \bF_p$, not all $0$, 
and $F \in k[[x,y,z]]$ such that
$F = 0$ defines an RDP
and only monomials of weight $a+b+c$ ($\in \bF_p$) appear in $F$, where $x,y,z$ have respective weights $a,b,c$.
By Lemma \ref{lem:mu_n RDP}(\ref{item:mu_n RDP:RDP}), it suffices to consider $k[[x,y,z]]/(F)$ of this form.
We will show that there exist a $\mu_p$-equivariant isomorphism
$k[[x,y,z]]/(F) \cong k[[x',y',z']]/(F')$ 
with $F'$ in Table \ref{table:fixed symplectic RDP} and $\wt(x',y',z') = (1,-1,0)$ 
up to $\Aut(\mu_p) = \bF_p^*$ (which amounts to replacing $(a,b,c)$ with $(ta,tb,tc)$ for some $t \in \bF_p^*$).

We write $F = \sum_{h,i,j} a_{hij} x^h y^i z^j$,
and we say that a polynomial or a formal power series \emph{has} a monomial if its coefficient is nonzero.

First assume the degree $2$ part $F_2$ is a non-square.
Then we may assume that $F_2$ contains a non-square monomial, say $xy$
(indeed, if this is not the case then $p \neq 2$ and $F_2$ contains at least two square monomials, say $x^2$ and $y^2$,
then $x$ and $y$ has the same weight, and then after a linear coordinate change we may assume $F_2$ contains $xy$).
Then we have $c = 0$.
If $a + b \not\in \set{0,a,b}$ then $F \in (x,y)^2$, which implies that $F = 0$ is not an RDP. 
If $a + b = a \neq 0$ then $F \in (x)$, again not an RDP.
Same if $a + b = b \neq 0$.
So we have $a + b = 0$ and hence $F \in k[[x^p,xy,y^p,z]]$.
Since $F$ cannot belong to $(x,y)$, there exists an integer $m$ such that $F$ has the monomial $z^{m}$.
Let $m$ be the smallest such integer.
We have $F = u_1 z^m + u_2 xy + g_1(x^p) + g_2(y^p)$ for some units $u_1,u_2 \in k[[x^p, x y, y^p, z]]^*$ and power series $g_1, g_2$.
We may assume $u_1,u_2 \equiv 1 \pmod{\idealm}$.
We eliminate $g_1, g_2$ by replacing $x$ with $x + g_2/y$ and $y$ with $y + g_1/x$ (and repeating this), and we obtain $F = u_1 z^m + u_2 xy$.
By replacing $x,y,z,F$ with suitable multiples we obtain $F = z^m + xy$.

Next assume $p \geq 3$ and $F_2$ is square. We may assume $F_2 = z^2$.
We may assume $F_3 \not\equiv 0 \pmod{z}$.
If $F_3$ has $x^2y$
then by $2c = 2a + b = a + b + c$ we have $b = 0$ and $a = c$, hence $F \in (x,z)^2$, which is absurd.
Hence we may assume $F_3$ has $y^3$. 
By $2c = 3b = a+b+c$ we have $(a,b,c) = (a,2a,3a)$. 
If $F$ does not have $x^3$, then $F \in (z^2, x^3z, xyz, x^6, x^4y, x^2y^2, y^3)$,
and $F = 0$ cannot define an RDP.
Hence $F$ has $x^3$, hence $p = 3$ and then $F \in k[[x^3,xy,y^3,z]]$.
We may assume $F$ does not have $xyz$.
To define an RDP, $F$ must have one of $x^2y^2, x^4y, xy^4$.

If it has $x^2y^2$, then it is $E_6^1$.
We can eliminate $x^h y^i z$ and we have 
\begin{gather*} %notes p106
F = z^2 u_1 + x^3 + y^3 + x^2 y^2 u_2 
+ \sum_{(h,i,j) \in S_1} a_{hij} x^h y^i z^j 
+ \sum_{(h,i) \in S_2} b_{hi} x^h y^i + \sum_{(h,i) \in S_3} c_{hi} x^h y^i, \\
S_1 = \set{(4,1,0), (1,4,0)}, \qquad
S_2 = \set{(6,0), (7,1)}, \qquad
S_3 = \set{(0,6), (1,7)}, 
\end{gather*}
where 
$a_{hij} \in k$,
$b_{hi} \in k[[x^3]]$,
$c_{hi} \in k[[y^3]]$,
and $u_1, u_2 \in k[[x^3, x y, y^3, z]]^*$.
By replacing $x$ with $x + t y^2$ and $y$ with $y + t' x^2$, we eliminate $a_{410}$ and $a_{140}$.
Then by replacing $F$ with $(1 + x^3 b_{60} + x^4 y b_{71} + y^3 c_{06} + x y^4 c_{17}) F$ we eliminate all $b_{hi}$ and $c_{hi}$. Finally we replace $x,y,z,F$ with suitable multiples and achieve $u_1 = u_2 = 1$
(for example, we let $F = u_2^{-3} F'$, $x = u_2^{-1} x'$, $y = u_2^{-1} y'$, $z = (u_1 u_2^3)^{-1/2} z'$).

If it does not have $x^2 y^2$ but has $x^4 y$ or $x y^4$, then it is $E_8^1$.
By replacing $F$ with a unit multiple we may assume it has $x^4 y$ and does not have $x y^4$.
We can eliminate $x^h y^i z$ and we have 
\begin{gather*} %notes p106
F = z^2 u_1 + x^3 + y^3 + x^4 y u_2 
+ \sum_{(h,i,j) \in S_1} a_{hij} x^h y^i z^j 
+ \sum_{(h,i) \in S_2} b_{hi} x^h y^i + \sum_{(h,i) \in S_3} c_{hi} x^h y^i, \\
S_1 = \set{(6,0,0), (3,3,0), (0,6,0)}, \qquad
S_2 = \set{(9,0)}, \qquad
S_3 = \set{(0,9), (1,7), (2,5), (3,6)}, 
\end{gather*}
with $a_{hij}$, $b_{hi}$, $c_{hi}$, and $u_1,u_2$ as in the previous case.
By replacing $y$ with $y + t x^2$ we eliminate $a_{600}$.
By replacing $x$ with $x + t' y^2$ and $F$ with $(1 + t'' y^3) F$ we eliminate $a_{330}$ and $a_{060}$.
By replacing $F$ with $(1 + x^2 y^2 c_{25} + x y^4 c_{17} + y^6 c_{09}) F$, 
then with $(1 + x^6 b_{90} + x^3 y^3 c_{36}) F$,
we eliminate all $b_{hi}$ and $c_{hi}$.
We replace $x,y,z,F$ with suitable multiples and achieve $u_1 = u_2 = 1$.

Hereafter assume $p = 2$ and that $F_2$ is a square. 
If $\sqrt{F_2}$ is homogeneous,
then we may assume $F_2 = x^2$, 
we may assume $F_3$ has $y^2 z$ or $z^3$,
and then we have $(a,b,c) = (1,1,0)$, hence $F \in k[[x^2,xy,y^2,z]]$.
If $\sqrt{F_2}$ is not homogeneous,
then we may assume $F_2 = x^2 + z^2$ and $a = 1$ and $c = 0$, 
and then again we have $(a,b,c) = (1,1,0)$, hence $F \in k[[x^2,xy,y^2,z]]$,
and $F_3$ has $x y z$, $y^2 z$, $z^3$, or $x^2 z$.

Assume ($F_2$ is $x^2$ or $x^2 + z^2$ and) $F_3$ contains $y^2 z$.
Also, since $F \notin (x,y)^2$, we see that $F$ has $z^l$ ($l \geq 2$).
We have 
\[
F \equiv x^2 + y^2 z + z^l \pmod{(x^4, x^3 y, x^2 y^2, x y^3, y^4, x^2 z, x y z, y^2 z^2, z^{l+1})}.
\] 
Write $F = F_0(x^2, y^2, z) + xy F_1(x^2, y^2, z)$.
Then there exist unique $f, g \in k[[z]]$ such that $F_0 \in (x^2 - f(z)^2, y^2 - g(z)^2)$,
and they satisfy 
\[
l = \min\set{2 \ord_z(f), 2 \ord_z(g) + 1}.
\]
If $l$ is even then, by replacing $y$ with $y - x g/f$, we may assume $g = 0$.
If $l$ is odd then, by replacing $x$ with $x - y f/g$, we may assume $f = 0$.
We eliminate $a_{hij}$ with $h \geq 2$, $(h,i,j) \neq (2,0,0)$, by replacing $F$ with $(1 + a_{hij} x^{h-2} y^i z^j) F$,
and $a_{hij}$ with $i \geq 2$, $(h,i,j) \neq (0,2,0), (0,2,1)$, by replacing $z$ with $z + a_{hij} x^h y^{i-2} z^j$.
We obtain $F = x^2 + y^2 z + z^l u(z) + x y e(z)$, where $e(z) \in k[[z]]$ and $u(z) \in k[[z]]^*$.
We have $e(z) \neq 0$, since if $e(z) = 0$ then $F = F_0 \in ((x - f(z))^2, (y - g(z))^2)$, which is absurd.
Write $e(z) = z^k v(z)$, $k \geq 1$, $v(z) \in k[[z]]^*$.
It is $D_{2k+l}^{\floor{l/2}}$.
If $l$ is even then, since $F_0 \in (x^2 - f(z)^2, y^2 - g(z)^2)$ and $g = 0$,
we have $z^l u(z) = f(z)^2$ and hence $u(z)$ is a square, and then by replacing $x$ with $u(z)^{1/2} x$ and by replacing $F$ with a unit multiple we obtain 
$F = x^2 + y^2 z u'(z) + z^l + x y e(z)$ for some $u'(z) \in k[[z]]^*$.
Similarly, if $l$ is odd then, since $f = 0$,
 we have $z^l u(z) = z g(z)^2$ and hence $u(z)$ is a square, and then (by replacing $y$) we obtain
$F = x^2 u'(z) + y^2 z + z^l + x y e(z)$.
By replacing $x,y,z,F$ with unit multiples, we can achieve $u' = v = 1$.

Assume $F_2 = x^2$ and $F_3$ has $z^3$ but no $y^2z$.
To define an RDP $F$ must have $y^4$ and must have $xyz$ or $xy^3$.
If $F$ has $xyz$ then it is $E_7^3$.
We have 
\begin{gather*} % notes p65, p102, p104
F = x^2 + y^4 + z^3 u_1+ x y z u_2 + 
\sum_{(h,i,j) \in S_1} a_{hij} x^h y^i z^j \\
+ \sum_{(h,i) \in S_2} b_{hi} x^h y^i + \sum_{(h,i) \in S_3} c_{hi} x^h y^i + \sum_{(h,i) \in S_4} d_{hi} x^h y^i, \\
S_1 = \set{(3,1,0), (1,3,0), (1,5,0), (0,2,2), (2,0,1), (2,0,2), (0,4,1), (0,4,2)}, \\
S_2 = \set{(4,0), (5,1)}, \qquad
S_3 = \set{(0,6), (1,7)}, \qquad
S_4 = \set{(2,2)},
\end{gather*}
where $a_{hij} \in k$, 
$b_{hi} = \sum_{j = 0}^{2} b_{hij} z^j$ with
$b_{hij} \in k[[x^2]]$, 
$c_{hi} = \sum_{j = 0}^{2} c_{hij} z^j$ with 
$c_{hij} \in k[[y^2]]$, 
$d_{hi} \in k[[x^2, x y, y^2]]$,
and $u_1,u_2 \in k[[x^2, x y, y^2, z]]^*$.
We may assume $u_1, u_2 \equiv 1 \pmod{\idealm}$.
We replace $z$ with $z + a_{310} x^2 + a_{130} y^2 + a_{150} y^4$,
$x$ with $x + t y z$ (which eliminates $a_{022}$),
$y$ with $y + a_{201} x + a_{202} x z$, 
and
$x$ with $x + a_{041} y^3 + a_{042} y^3 z$,
thus eliminate all $a_{hij}$ ($(h,i,j) \in S_1$).
We replace 
$F$ with $(1 + x^2 b_{40} + y^2 c_{06}) F$,
$F$ with $(1 + x^3 y b_{51} + x y^3 c_{17}) F$,
and 
$z$ with $z + d_{22} x y$,
thus eliminate all $b_{hi}$, $c_{hi}$, and $d_{hi}$.
We replace $x,y,z,F$ with suitable multiples and achieve $u_1= u_2 = 1$.

Next, if $F$ does not have $xyz$ but has $x y^3$ then it is $E_8^3$.
We have 
\begin{gather*} % notes p66, p103, p105
F = x^2 + y^4 + z^3 u_1 + x y^3 u_2 + 
\sum_{(h,i,j) \in S_1} a_{hij} x^h y^i z^j 
+ \sum_{(h,i) \in S_2} b_{hi} x^h y^i + \sum_{(h,i) \in S_3} c_{hi} x^h y^i, \\
S_1 = \set{(2,0,1), (2,0,2), (0,4,1), (0,2,2), (1,1,2), (0,6,0), (0,4,2), (0,6,1), (0,6,2), \\ (2,2,0), (2,2,1), (2,2,2)}, \qquad
S_2 = \set{(4,0), (3,1), (4,2))}, \qquad
S_3 = \set{(0,8)}, 
\end{gather*}
with $a_{hij}$, $b_{hi}$, $c_{hi}$, and $u_1,u_2$ as in the previous case.
We may assume $u_1, u_2 \equiv 1 \pmod{\idealm}$.
We replace 
$F$ with $(1 + a_{201} z + a_{202} z^2) F$,
$x$ with $x + t y z$ and 
$z$ with $z + t' y^2$ (which eliminates $a_{041}$ and $a_{022}$),
$z$ with $z + a_{112} x y$, 
$x$ with $x + t'' y^3$ (which eliminates $a_{060}$),
$x$ with $x + a_{042} y z^2 + a_{061} y^3 z$,
$x$ with $x + a_{062} y^3 z^2$, 
and
$y$ with $y + (a_{220} + a_{221} z + a_{222} z^2) x$, 
thus eliminate all $a_{hij}$ ($(h,i,j) \in S_1$).
We replace 
$F$ with $(1 + x^2 b_{40} + x y b_{31} + x^2 y^2 b_{42} + y^4 c_{08}) F$,
thus eliminate all $b_{hi}$ and $c_{hi}$.
We replace $x,y,z,F$ with suitable multiples and achieve $u_1 = u_2 = 1$.

Assume $F_2 = x^2 + z^2$ and $F_3$ does not have $y^2 z$ and has $xyz$.
$F$ moreover needs $x y^i$, $y^i z$, or $y^l$.
Replacing $z$ with $z + xy^{i-1}$ (resp.\ $x$ with $x + y^{i-1}z$) we may assume there are no $x y^i$ (resp.\ $y^i z$) of low degree.
Thus we have 
\begin{gather*}
F \equiv x^2 + z^2 + xyz + y^{2n-2} \\
\pmod{(x^4, x^3 y, x^2 y^2, x y^{m} (m > n), y^{2n}, x^2 z, x y z^2, y^2 z^2, y^{m} z (m > n), z^3)}, 
\end{gather*}
 $n \geq 3$, and this is $D_{2n}^{n-1}\another$. 
We eliminate $x^h y^i$ ($h ,i \geq 1$, $(h, i) \neq (1, 1)$) by replacing $z$ with $z + a_{hi0} x^{h-1} y^{i-1}$, and $x^h z^j$ and $y^i z^j$ similarly.
We eliminate $x^h$, $y^i$, $z^j$ ($h \geq 3$, $i \geq 2n-1$, $j \geq 3$) by replacing $F$ with a unit multiple.
We obtain $F = x^2 + z^2 + xyz u + y^{2n-2}$ for some $u \in k[[x^2, x y, y^2, z]]^*$, and we can achieve $u = 1$.

Assume $F_2 = x^2 + z^2$ and $F_3$ does not have $y^2 z$ nor $xyz$.
By replacing $F$ with $(1 + a_{201} x^2 z)^{-1} F$ we may assume $F$ does not have $x^2 z$.
Then $F$ has $z^3$ and $F$ moreover needs $xy^3$, and then it is $E_7^2$.
We have 
\begin{gather*} %notes p84, p105, p112
F = x^2 + z^2 + z^3 u_1 + x y^3 u_2 + 
\sum_{(h,i,j) \in S_1} a_{hij} x^h y^i z^j 
+ \sum_{(h,i) \in S_2} b_{hi} x^h y^i + \sum_{(h,i) \in S_3} c_{hi} x^h y^i, \\
S_1 = \set{(0,4,0), (0,4,1), (0,6,0), (0,2,2), (3,1,0), (2,2,0), (1,1,2), (4,0,0), (2,0,2) }, \\
S_2 = \set{(4,0), (3,1), (2,2)}, \qquad
S_3 = \set{(0,4)},
\end{gather*}
with $a_{hij}$, $b_{hi}$, $c_{hi}$, and $u_1,u_2$ as in the case of $E_7^3$,
and moreover $b_{hi} \in (x^2, z)$ and $c_{hi} \in (y^4, y^2 z, z^2)$.
We may assume $u_1, u_2 \equiv 1 \pmod{\idealm}$.
We replace 
$z$ with $z + a_{040}^{1/2} y^2$, 
$x$ with $x + a_{041} y z$, 
$x$ with $x + t y^3$ (which eliminates $a_{060}$),
$F$ with $(1 + a_{022} y^2) F$, 
$y$ with $y + t' x$ and 
$z$ with $z + t'' x y$ and 
$F$ with $(1 + t''' x y) F$ (which eliminates $a_{310}$, $a_{220}$, and $a_{112}$),
$z$ with $z + t'''' x^2$ and 
$F$ with $(1 + t''''' x^2) F$ (which eliminates $a_{400}$ and $a_{202}$), 
thus eliminate all $a_{hij}$ ($(h,i,j) \in S_1$).
We replace $x$ with $x + y c_{04}$ 
and $F$ with $(1 + b_{40} x^2 + b_{31} x y + b_{22} y^2) F$ to eliminate all $b_{hi}$ and $c_{hi}$.
We replace $x,y,z,F$ with suitable multiples and achieve $u_1 = u_2 = 1$.
\end{proof}

\subsection{\texorpdfstring{$\mu_n$-actions}{mu\_n-actions} on RDPs}

In this section we classify $\mu_n$-actions on RDPs under each of the following assumptions.
\begin{itemize}
\item $w$ is not fixed by $\mu_n$ (Proposition \ref{prop:non-fixed mu_n RDP}).
\item $n = p^e$, and the subgroup scheme $\mu_p$ fixes $w$ and is symplectic (Proposition \ref{prop:mu_p^e RDP}).
\item $w$ is fixed by $\mu_n$ and the action is symplectic (Proposition \ref{prop:symplectic mu_n RDP}).
\end{itemize}

In Propositions \ref{prop:mu_p^e RDP} and \ref{prop:symplectic mu_n RDP}) we use the convention that a smooth point is of type $A_0$.

\begin{lem} \label{lem:image of non-fixed point}
Let $X$ be a $k$-scheme equipped with a $\mu_{p^2}$-action.
Let $\pi_1 \colon X \to X_1 = X / \mu_p$ 
be the quotient morphism by the action of the subgroup scheme $\mu_p \subset \mu_{p^2}$.
If $w \in X$ is non-fixed by the action of $\mu_p$
then $\pi_1(w) \in X_1$ is non-fixed by the action of $\mu_{p^2}/\mu_p$.
\end{lem}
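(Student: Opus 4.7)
The plan is to reduce to the affine local situation and then translate everything into the language of the weight decomposition via Cartier duality. Since the statement is local at $z$ and $\mu_{p^2}$ is a local group scheme, I may replace $X$ by $\Spec B$ with $B = \cO_{X,z}$, on which the $\mu_{p^2}$-action gives a decomposition $B = \bigoplus_{i \in \bZ/p^2\bZ} B_i$ compatible with multiplication. The restricted $\mu_p$-action corresponds to reducing weights modulo $p$, so $B^{\mu_p} = \bigoplus_{i \in p\bZ/p^2\bZ} B_i$, and the induced $\mu_{p^2}/\mu_p \cong \mu_p$-action on $B^{\mu_p}$ is given by the canonical isomorphism $p\bZ/p^2\bZ \isomto \bZ/p\bZ$, sending $pk \mapsto k$.

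Next, I would invoke Proposition \ref{prop:fixedpoints}(\ref{item:in m}). The hypothesis that $z$ is not fixed by $\mu_p$ translates to the existence of some $i \in \bZ/p^2\bZ$ with $i \not\equiv 0 \pmod p$ and some homogeneous $b \in B_i$ with $b \notin \fm_z$, i.e.\ $b$ is a unit in $B$. Then $b^p \in B_{pi}$, and since $i \not\equiv 0 \pmod p$, the weight $pi$ is a nonzero element of $p\bZ/p^2\bZ$. Moreover $b^p$ is still a unit, so $b^p \notin \fm_z \cap B^{\mu_p} = \fm_{\pi_1(z)}$.

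Finally, applying Proposition \ref{prop:fixedpoints}(\ref{item:in m}) in reverse to the $\mu_{p^2}/\mu_p$-action on $B^{\mu_p}$, the existence of a nonzero-weight element outside $\fm_{\pi_1(z)}$ says exactly that $\pi_1(z)$ is not fixed. The only points that require care are the identification $\fm_{\pi_1(z)} = \fm_z \cap B^{\mu_p}$, which follows because $\pi_1(z)$ is by definition the image in $\Spec B^{\mu_p}$ of $\fm_z$, and the fact that $\fm_z$ is prime so that units remain units when raised to the $p$-th power; neither of these presents a genuine obstacle.
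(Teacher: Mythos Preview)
Your proof is correct and follows essentially the same approach as the paper: both arguments produce a unit of nonzero $\mu_p$-weight and then raise it to the $p$-th power to obtain a unit of nonzero weight in $B^{\mu_p}$. The only cosmetic difference is that the paper invokes Lemma~\ref{lem:good coord} to obtain $y \in \fm_z$ with $1+y$ of $\mu_p$-weight $1$ and then uses $(1+y)^p = 1 + y^p \in B_p$, whereas you go directly through Proposition~\ref{prop:fixedpoints}(\ref{item:in m}) and refine to a $\mu_{p^2}$-homogeneous unit $b$ before taking $b^p$.
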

\begin{proof}
Let $\cO_{X,w} = B = \bigoplus_{i \in \bZ/p^2\bZ} B_i$ be the corresponding decomposition.
Since $w$ is non-fixed by $\mu_p$ there exists $y \in \idealm_{w} \subset B$ with $1 + y \in \bigoplus_{i \equiv 1 \pmod{p}} B_i$ (Lemma \ref{lem:good coord}).
Then,
since $y^p \in \idealm_{\pi_1(w)} \subset \cO_{X_1, \pi_1(w)}$ satisfies $1 + y^p = (1 + y)^p \in B_p$,
we conclude by Proposition \ref{prop:fixedpoints} that $\pi_1(w)$ is non-fixed by $\mu_{p^2}/\mu_p$.
\end{proof}

Suppose $X$ is a scheme equipped with a $\mu_n$-action, $n = p^e r$ with $p \notdivides r$, and $w \in X$ is a closed point fixed by $\mu_r \subset \mu_n$.
Let $f$ be the maximal integer with $0 \leq f \leq e$ such that the subgroup scheme $\mu_{p^f} \subset \mu_{p^e} \subset \mu_n$ fixes $w$.
We say that $\mu_{p^f r}$ is the stabilizer of $w$ and denote it by $\Stab(w)$.

\begin{prop} \label{prop:non-fixed mu_n RDP}
Let $\cO_{X,w}$, together with a $\mu_n$-action, be as in the beginning of Section \ref{sec:mu_p RDP},
and assume $w$ is an RDP.
Write $n = p^e r$ with $p \notdivides r$,
and $\Stab(w) = \mu_{p^f r}$. 
Suppose $\Stab(w) \subsetneq \mu_n$ (hence $f < e$, in particular $e \geq 1$).
Then there exist $x,y,z \in \idealm$ generating $\idealm$,
with $x,y,1+z$ homogeneous
such that, up to replacing $r$ with a multiple
and up to $\Aut(\mu_n) = (\bZ/n\bZ)^*$,
the weights and the type of singularity are 
as in Table \ref{table:non-fixed mu_n RDP}.
\end{prop}

\begin{table} 
\caption{Non-fixed $\mu_n$-actions on RDPs (listed in order of appearance in the proof)} \label{table:non-fixed mu_n RDP} 
\begin{tabular}{lllllll}
\toprule
$p$      & $n$      & $p^e$       & $p^f$     & $\wt(x,y,1+z)$ & equation                           & RDP \\
   \midrule
any      & $p^e r$  & $p^e$       & $p^f$     & $1,-1,p^f r$   & $xy + z^{p^{e-f}m}$                & $A_{p^{e-f}m - 1}$ \\
$\neq 2$ & $2 p^e$  & $p^e$       & $1$       & $0,p^e,2$      & $x^2 + y^2 + z^{p^{e}m}$           & $A_{p^{e}m - 1}$ \\
   \midrule
$5$      & $30$     & $5$         & $1$       & $15,10,6$      & $x^2 + y^3 + z^5$                  & $E_8^0$      \\
   \midrule
$3$      & $12$     & $3$         & $1$       & $3,6,4$        & $x^2 + y^3 + yz^3$                 & $E_7^0$      \\
$3$      & $12$     & $3$         & $1$       & $6k,3,4$       & $x^2 + z^3 + y^4$                  & $E_6^0$      \\
$3$      & $ 6$     & $3$         & $1$       & $3,3k,2$      & $x^2 + z^3 + y^4$                  & $E_6^0$      \\
$3$      & $30$     & $3$         & $1$       & $15,6,10$      & $x^2 + z^3 + y^5$                  & $E_8^0$      \\
$3$      & $18$     & $9$         & $3$       & $3,2,6$        & $x^2 + y^3 + z^3 (1+z)$            & $E_6^0$      \\
   \midrule
$2$      & $4m-2$   & $2$         & $1$       & $2m,2,2m-1$    & $x^2 + y z^2 + x y^m$              & $D_{2m+1}^0$ \\
$2$      & $4(m-1)$ & (*)         & $2^{e-1}$ & $1,2,2(m-1)$   & $x^2 + y z^2 + y^m (1+z)$          & $D_{2m}^0$   \\
$2$      & $ 6$     & $2$         & $1$       & $0,2,3$        & $x^2 + x z^2 + y^3$                & $E_6^0$      \\
$2$      & $24$     & $8$         & $2$       & $3,2,6$        & $x^2 + y^3 + z^4 (1+z)$            & $E_8^0$      \\
 $2$      & $12$     & $4$         & $2$       & $3,2,6$        & $x^2 + y^3 + z^4 (1+z)$            & $E_8^0$      \\
  
$2$      & $4m-2$   & $2$         & $1$       & $2m-2,2,2m-1$  & $z^2 + x^2 y + x y^m$              & $D_{2m}^0$   \\
$2$      & $4m$     & (*)         & $2^{e-1}$ & $1,-2,2m$      & $z^2 + x^2 y + y^m (1+z)$          & $D_{2m+1}^0$ \\
$2$      & $ 6$     & $2$         & $1$       & $2,2k,3$       & $z^2 + x^3 + y^3$                  & $D_4^0$      \\
$2$      & $18$     & $2$         & $1$       & $6,4,9$        & $z^2 + x^3 + xy^3$                 & $E_7^0$      \\
$2$      & $30$     & $2$         & $1$       & $10,6,15$      & $z^2 + x^3 + y^5$                  & $E_8^0$      \\
      
$2$      & $12$     & $4$         & $2$       & $3,2,6$        & $x^2 + y^3 + z^2 (1+z)$            & $D_4^0$      \\
$2$      & $ 4$     & $4$         & $2$       & $1,0,2$        & {\scriptsize $x^2 (1+y) (1+z) + z^2 + y^{2m+1}$} & $D_{4m}^0$   \\
$2$      & $ 8$     & $8$         & $4$       & $2,1,4$        & $x^2 + z^2(1+z) + xy^2$            & $D_5^0$      \\
$2$      & $12$     & $4$         & $2$       & $3,1,6$        & $x^2 + z^2 (1+z) + xy^3$           & $E_7^2$      \\
$2$      & $20$     & $4$         & $2$       & $5,2,10$       & $x^2 + z^2 (1+z) + y^5$            & $E_8^0$      \\
  
$2$      & $2^e$ ($e \geq 2$)  & $2^e$       & $2$       & $0,-1,2$       & $x^2 + y^2 (1+z) + x z^{2^{e-1}m}$       & $D_{2^e m}^0$   \\
\bottomrule
\end{tabular}

(*): In the cases where $n = 4(m-1)$ or $n = 4m$, $p^e$ is the highest power of $p = 2$ dividing $n$.
\end{table}

In this case, we do not pursue the exact equation, 
and the equations in Table \ref{table:non-fixed mu_n RDP} are merely examples.

\begin{proof}
We first show that 
there exist elements $x,y,z \in \idealm$ generating $\idealm$ and satisfying $\wt(x,y,1+z) = (a,b,c)$ with $c = n/p^{e-f} = p^f r$.
Since the stabilizer of the $\mu_{n}$-action is $\mu_{p^f r} \subsetneq \mu_{n }$, 
there exist $i \in \bZ/n\bZ$ and a homogeneous element $t$ of weight $i$
such that $t \notin \idealm$
and $\gcd\set{i, n} = p^f r$.
We may assume $t \equiv 1 \pmod{\idealm}$, and then $z' := -1 + t \in \idealm$ satisfies $\wt(1 + z') = i$.
Take an integer $q$ such that $q i = p^f r$ (in $\bZ/n\bZ$), and then $1 + z := (1 + z')^q$ satisfies $z \in \idealm$ and $\wt(1 + z) = p^f r$.
Now take $x'^{(1)}, x'^{(2)}, x'^{(3)}$ generating $\idealm$.
We may assume each $x'^{(h)}$ is homogeneous with respect to $\mu_{p^f r}$,
that is, there exists $i^{(1)}, i^{(2)}, i^{(3)} \in \bZ/n\bZ$ such that 
$x'^{(h)} = \sum_{j = 0}^{p^{e-f}-1} x'^{(h,j)}$
with $\wt(x'^{(h,j)}) = i^{(h)} + j p^f r$.
Let $x^{(h)} := \sum_{j = 0}^{p^{e-f}-1} (1+z)^{-j} x'^{(h,j)}$.
Then $\wt(x^{(h)}) = i^{(h)}$ and, 
since $x^{(h)} \equiv x'^{(h)} \pmod{(z)}$,
the elements $x^{(1)}, x^{(2)}, x^{(3)}, z$ generate $\idealm$.
We can omit one of these $4$ elements, 
which cannot be $z$ since $\idealm$ is not generated by homogeneous elements.

In this proof, by a \emph{monomial} we mean a polynomial of the form $x^i y^j z^{p^{e-f} l} (1+z)^m$ with $0 \leq m < p^{e-f}$.
Any polynomial (resp.\ formal power series) is uniquely expressed as a finite (resp.\ possibly infinite) sum of monomials with $k$-coefficients,
and we say that a polynomial or a formal power series \emph{has} a monomial if its coefficient is nonzero.
Expressions such as $F = x^2 + y^3 (1+z) + \dots$ will indicate that $F$ has these monomials.
However, when we say of degree $m$ part $F_m$ of $F$, 
this is understood with respect to the usual monomials $x^i y^j z^l$.

Assume the degree $2$ part $F_2$ of $F$ is either 
irreducible or the product of two distinct homogeneous linear factors.
Then we may assume $F = xy (1+z)^i + \dots$, 
and $F$ must have $z^{m'} (1+z)^j$, 
we may assume $i = j = 0$ by replacing $y$ and $F$,
and it is $A_{m'-1}$ with $m' = p^{e-f} m$.

Assume $F_2$ is the product of two distinct non-homogeneous linear factors.
Then we have $p \neq 2$ and we may assume $F = x^2 (1+z)^i + y^2 (1+z)^j + z^{m'} (1+z)^k + \dots$ and 
$b \equiv a + r/2 \pmod{r}$.
We may assume $k = 0$ by replacing $F$ with $(1+z)^{-k} F$
and $i = j = 0$ by replacing $x$ and $y$ with $x (1+z)^{i(p^e+1)/2}$ and $y (1+z)^{j(p^e+1)/2}$.
We have $2a = 2b = 0$
and then we have $f = 0$ and $r = 2$ (otherwise $a,b,c$ cannot generate $\bZ/p^e r \bZ$).
It is $A_{m'-1}$ with $m' = p^e m$ and we may assume $(a,b) = (0,n/2)$.

Assume $p \geq 5$ and $F_2$ is a square.
We may assume $F_2 = x^2$, $F_3 = y^3$, $F = x^2 + y^3 + \dots$,4
and then $F$ must have $z^5 (1+z)^i$ and we have $p = 5$, it is $E_8^0$, 
and we may assume $i = 0$.
By $2a = 3b = 0$ we have $n \divides 30$ and we may assume $a = 15$, $b = 10$.

Assume $p = 3$ and $F_2$ is a square.
We may assume $F = x^2 + \dots$.
We may assume $F_3 \bmod (x)$ is either $y^3$, $z^3$, or $y^3 + z^3$.
If $F = x^2 + y^3 + \dots$ then $F$ must have $y z^3$ and it is $E_7^0$.
If $F = x^2 + z^3 + \dots$ then $F$ must have $y^4$ or $y^5$ and it is $E_6^0$ or $E_8^0$.
If $F = x^2 + y^3 + z^3 (1+z)^i + \dots$ then we may assume $i = 1$ 
(if $i = 0$ then by replacing $z$ with $y (1 + z)^l + z $ we reduce this case to the previous case)
and then it is $E_6^0$.

Assume $p = 2$ and $F_2$ is a square.
We may assume $F_2$ is $x^2$, $z^2$, $x^2 + z^2$, $x^2 + y^2$, or $x^2 + y^2 + z^2$.

Assume $F_2 = x^2$. We may assume $F = x^2 + \dots$.
Then $F$ must have $y z^2 (1+z)^i$ or $y^3 (1+z)^j$.
If $F$ has $y z^2 (1+z)^i$ then we may assume $i = 0$ and $F$ must have $x y^m (1+z)^i$ ($m \geq 2$) or $y^m (1+z)$ ($m \geq 3$).
In the former case we may assume $i = 0$ (by replacing $x$ with $x (1+z)^i$) and we have $D_{2m+1}^0$ with $(a,b,c) = (2m,2,2m-1)$, $n \divides 2(2m-1)$.
In the latter case we have $D_{2m}^0$ with $(a,b,c) = (1,2,2(m-1))$, $n \divides 4(m-1)$.
Now assume $F$ does not have $y z^2 (1+z)^i$ and has $y^3 (1+z)^j$. We may assume $j = 0$.
Then $F$ must have either $x z^2 (1+z)^i$ or $z^4 (1+z)^i$.
If $F$ has $x z^2 (1+z)^i$, then we may assume $i = 0$ and then
$F = x^2 + y^3 + x z^2 + \dots$ is $E_6^0$, and we have $e = 1$, $f = 0$, $a = 0$, $c = n/2$.
If $F$ does not have $x z^2 (1+z)^i$ and has $z^4 (1+z)^i$, then we may assume $i = 1$ and then
$F = x^2 + y^3 + z^4 (1+z) + \dots$ is $E_8^0$, and we have $e - f \leq 2$, $f = 1$, $(a,b,c) \equiv (1,2/3,2) \pmod{2^e}$.

Assume $F_2 = z^2$. 
We have $e - f = 1$. 
Suppose $F$ has $x^2 y (1 + z)^i$. We may assume $i = 0$.
We may assume $F$ does not have monomials $z^2 M$ ($M \neq 1$) nor $x^2 M$ ($M \neq 1, y$) of low degree
(by replacing $F$ or $y$ with $(1 + M)^{-1} F$ or $y + M$ respectively).
$F$ must have $x y^m$ or $y^m (1+z)$ and then it is $D_{2m}^0$ or $D_{2m+1}^0$. 
Now suppose $F$ does not have $x^2 y (1 + z)^i$ nor $x y^2 (1 + z)^i$ and has $x^3 (1 + z)^i$. We may assume $i = 0$.
Then $F$ must have $y^3 (1 + z)^j$ ($D_4^0$), $x y^3 (1 + z)^j$ ($E_7^0$), 
or $y^5 (1 + z)^j$ ($E_8^0$). 

Assume $F_2 = x^2 + z^2$.
We may assume $F = x^2 + z^2 (1+z)^i + \dots$.
If $i = 0$ then by replacing $z$ with $z + x$ or $z + x (1+z)$
we reduce this case to the previous case.
Assume $i = 1$.
$F$ cannot have $x^3 (1+z)^i$ nor $x z^2 (1+z)^i$.
(If $F$ has $x^3 (1+z)^i$ then we have $2a = c = 3a + ic$ and this implies $(2i+1)c = 0$, contradicting $c = n/p^{e-f}$. Other cases are similar.)
We may assume $F$ does not have $y z^2 (1+z)^i$.
If $F$ has $y^3 (1+z)^i$ then $F$ does not have $x y^2 (1+z)^j$ and it is $D_4^0$.
If $F$ does not have $y^3 (1+z)^i$ and has $x^2 y (1+z)^i$, then $F$ cannot have $x y^2 (1+z)^j$,
and $F$ must have $y^{2m+1} (1+z)^j$, and it is $D_{4m}^0$. 
If $F$ does not have $y^3 (1+z)^i$ nor $x^2 y (1+z)^i$ and has $xy^2 (1+z)^i$ then it is $D_5^0$.
If $F$ does not have $y^3 (1+z)^i$ nor $xy^2 (1+z)^i$ nor $x^2 y (1+z)^i$,
then $F$ must have $xy^3 (1+z)^i$ or $y^5 (1+z)^i$, and (we may assume $i = 0$ and) it is $E_7^2$ or $E_8^0$.
(This $E_7^2$ is the only example of $D_n^r$ or $E_n^r$ with $r > 0$ in this proposition.)

Assume $F_2 = x^2 + y^2$.
Write $F = x^2 + y^2 (1+z)^j + \dots$.
If $j = 0$ then by replacing $x$ with $x + y (1+z)^k$ we reduce to the $F_2 = x^2$ case.
Suppose $j = 1$. 
We may assume $F$ does not have monomials $x^2 M$ ($M \neq (1+z)^i$) nor $y^2 M$ ($M \neq (1+z)^i$) of low degree
(by replacing $F$ or $z$ with $(1 + M)^{-1} F$ or $z + M$ respectively).
$F$ must have $x z^{2m} (1+z)^k$ or $y z^{2m} (1+z)^k$,
by symmetry we may assume $F$ has $x z^{2m} (1+z)^k$, we may assume $k = 0$, 
and then it is $D_{4m}^0$.

Assume $F_2 = x^2 + y^2 + z^2$.
Write $F = x^2 (1+z)^i + y^2 (1+z)^j + z^2 (1+z)^k + \dots$, $i,j,k \in \set{0,1}$.
If $i = j$ then we reduce this case to $F = x^2 + z^2$ case
by replacing $x$ with $x + y (1+z)^l$.
If $i \neq j$ then either $i = k$ or $j = k$ and then we reduce this case to $F = x^2 + z^2$ case
by replacing $z$ with $z + x (1+z)^l$ or $z + y (1+z)^l$.
\end{proof}

\begin{prop} \label{prop:mu_p^e RDP}
Let $\cO_{X,w}$, together with a $\mu_n$-action, be as in the beginning of Section \ref{sec:mu_p RDP}.
Suppose $n = p^e$ with $p > 0$ and $e \geq 2$.
Let $\Stab(w) = \mu_{p^f}$ ($0 \leq f \leq e$).
Suppose $f > 0$ and that the subgroup scheme $\mu_p \subset \Stab(w)$ acts symplectically.
Then one of the following is true.
\begin{itemize}
	\item $w$ is $A_{p^{e-f} m-1}$ for some integer $m \geq 1$.
	\item $w$ is $E_7^2$ and $(p^f, p^e) = (2, 4)$.
	\item $w$ is $D_{2n+1}^{n-1}$ ($n \geq 2$) or $E_8^3$, and $(p^f, p^e) = (4, 4)$.
\end{itemize}
\end{prop}

\begin{proof}
Let $\cO_{x,w} = B = \bigoplus_{i \in \bZ/p^2\bZ} B_i$ be the corresponding decomposition.

Assume $w$ is a smooth point.
Since $\mu_p$ acts symplectically,
the maximal ideal $\idealm$ is generated by two elements 
$x \in \bigoplus_{i \equiv a \pmod{p}} B_i$ and 
$y \in \bigoplus_{i \equiv b \pmod{p}} B_i$ 
for some $a,b \in \bZ/p^e\bZ$ with $a,b \not\equiv 0$ and $a + b \equiv 0 \pmod{p}$.
Since $a,b \not\equiv 0 \pmod{p}$, we may assume moreover $x \in B_a$ and $y \in B_b$.
Then $w$ is fixed by the whole group scheme $\mu_{p^e}$ and hence $e = f$.
This case is done
(with $m = 1$: recall the convention that a smooth point is of type $A_0$).

Hereafter we assume $w$ is an RDP.
Let $\varepsilon = 0$ if $e = f$ and $\varepsilon = 1$ if $e > f$.
By arguing as in the beginning of the proof of Proposition \ref{prop:non-fixed mu_n RDP} 
and by using Proposition \ref{prop:list of fixed symplectic RDP},
$\idealm$ is generated by three elements $x,y,z$ with 
$x \in B_a$, $y \in B_b$, and $\varepsilon + z \in B_c$,
and we may assume $a \equiv -b \not\equiv 0 \pmod{p}$,
and if $e > f$ then we may moreover assume $c = p^f$.

If $e > f$ then, since $\mu_p$ acts symplectically, 
it follows (from the classification given in Proposition \ref{prop:non-fixed mu_n RDP})
that either the RDP is $A_{m'-1}$ and then
we may assume $F = xy + z^{m'} + \dots$
and hence $p^{e-f} \divides m'$,
or the RDP is $E_7^2$ and $(p^f, p^e) = (2, 4)$.

Hereafter assume $e = f$.
If the RDP is $A_{m-1}$ then there is nothing to prove.
The remaining possibilities are given in Table \ref{table:fixed symplectic RDP} (Proposition \ref{prop:list of fixed symplectic RDP})
and in particular we have $p \leq 3$.

Assume $p = 3$ and the RDP is $E_6^1$ or $E_8^1$ (as in Proposition \ref{prop:list of fixed symplectic RDP}).
We may assume that $F = z^2 + x^3 + y^3 + \dots$ with $\wt(x,y,z) \equiv (1,-1,0) \pmod{3}$.
Then $F$ cannot be homogeneous since $\wt(x^3) \not\equiv \wt(y^3) \pmod{3^2}$.

Assume $p = 2$ and the RDP is $D_n$ or $E_n$.
By the classification in Proposition \ref{prop:list of fixed symplectic RDP},
we have $(a,b,c) \equiv (1,1,0) \pmod{2}$, 
and $F_2 \not\in k z^2$.
If $(a,b,c) \equiv (1,\pm 1,0) \pmod{4}$ then $F \in (x,y)^2$ and $F$ cannot define an RDP. 
If $(a,b,c) \equiv (1,1,2) \pmod{4}$ (then we may assume $F_2 = x^2$),
or if $(a,b,c) \equiv (1,-1,2) \pmod{4}$ and $F_2 = x^2$,
then $F \in (x^2,xyz,xy^3,z^3,z^2y^2,zy^4,y^6)$ and hence $F$ cannot define an RDP. 
Hence we may assume $(a,b,c) \equiv (1,-1,2) \pmod{4}$ and $F_2 = x^2 + y^2$, hence $p^e = 4$.
If $F$ has $xyz$, then $F$ must have $z^{2n-1}$ for some $n \geq 2$,
and then it is $D_{2n+1}^{n-1}$.
If $F$ does not have $xyz$, then $F$ must have $z^3$ 
and $F$ must also have $x^3 y$ or $x y^3$,
and then it is $E_8^3$.
\end{proof}

\begin{prop} \label{prop:symplectic mu_n RDP}
Let $p \geq 0$.
Let $\cO_{X,w}$, together with a $\mu_n$-action ($n > 1$), be as in the beginning of Section \ref{sec:mu_p RDP}.
Suppose the action fixes $w$ and is symplectic.
Then $p$, $n$, the type of singularity at $w$, and the quotient singularity are as in Table \ref{table:symplectic mu_n RDP}.
\end{prop}

\begin{proof}
If $p \notdivides n$ then this is Remark \ref{rem:tame action} (Table \ref{table:tame RDP}).
If $n = p$ then this is Proposition \ref{prop:list of fixed symplectic RDP}  (Table \ref{table:fixed symplectic RDP}). 
If $n = p^e$ with $e \geq 2$, then by Proposition \ref{prop:mu_p^e RDP} the possibilities are $D_{2n+1}^{n-1}$, $E_8^3$, and $A_{m-1}$ (with quotient $A_{mn-1}$).
In the other cases we conclude by comparing the tables of the tame case and the $n = p$ case.
For example, $E_6^r$ with $(p,n) = (3,6)$ is impossible 
since the $\mu_2$-quotient $E_7^r$ of $E_6^r$ does not admit a symplectic $\mu_3$-action.
\end{proof}

To a point $w$ fixed by a $\mu_n$-action, 
we define its multiplicity $m(w)$ inductively by 
\begin{itemize}
	\item 
	if $w$ is a smooth point then $m(w) = 1$, and
	\item
	if $w$ is an RDP then $m(w) = \sum_{w' \in \Fix(\mu_n \actson \Bl_w X)} m(w')$.
\end{itemize}
The multiplicity for each case is displayed in Table \ref{table:symplectic mu_n RDP}.
We observe that 
$m(\pi_r(w)) = r m(w)$ for any divisor $r < n$ of $n$, 
where $\map{\pi_r}{X}{X/\mu_r}$ is the quotient map
and $m(\pi_r(w))$ is the multiplicity with respect to the $\mu_{n/r}$-action on $X/\mu_r$.

\begin{table} 
\caption{Symplectic $\mu_n$-actions on RDPs} \label{table:symplectic mu_n RDP}
\begin{tabular}{llllll}
\toprule
$p$        & $n$ & $X$                    &                       & $X/\mu_n$                & multiplicity \\
\midrule  
any        & any & $A_{m-1}$              & ($m \geq 1$)          & $A_{mn-1}$               & $m$ \\
$\neq 2$   & $2$ & $A_{m-1}$              & ($m \geq 4$ even)     & $D_{m/2+2}$              & $2$ \\
$\neq 2$   & $4$ & $A_{m-1}$              & ($m \geq 3$ odd)      & $D_{m  +2}$              & $1$ \\
$\neq 2$   & $2$ & $D_{m+2}$              &                       & $D_{2m+2}$               & $m+1$ \\
$\neq 2,3$ & $3$ & $D_{4}$                &                       & $E_6$                    & $2$ \\
$2$        & $3$ & $D_4^r$                & ($r = 0,1$)           & $E_6^r$                  & $2$ \\
$\neq 2,3$ & $2$ & $E_6  $                &                       & $E_7$                    & $3$ \\
$3$        & $2$ & $E_6^r$                & ($r = 0,1$)           & $E_7^r$                  & $3$ \\
\midrule 
$3$        & $3$ & $E_6^1$                &                       & $E_6^1$                  & $2$ \\
$3$        & $3$ & $E_8^1$                &                       & $E_6^0$                  & $2$ \\
\midrule
$2$        & $2$ & $D_{2m}^r$             & ($1 \leq r \leq m-1$) & $D_{4r}^{(3r-m)^+}$      & $2r$ \\
$2$        & $2$ & $D_{2m}^{m-1}\another$ &                       & $D_{m+2}^{\floor{m/2}}$  & $2$ \\
$2$        & $2$ & $D_{2m+1}^r$           & ($1 \leq r \leq m-1$) & $D_{4r+2}^{(3r-m+1)^+}$  & $2r+1$ \\
$2$        & $2$ & $E_7^2$                &                       & $D_5^0$                  & $2$ \\
$2$        & $2$ & $E_7^3$                &                       & $E_7^3$                  & $3$ \\
$2$        & $2$ & $E_8^3$                &                       & $E_7^2$                  & $3$ \\
$2$        & $4$ & $D_{2m+1}^{m-1}$       &                       & $D_{2m+1}^{m-1}$         & $1$ \\
$2$        & $4$ & $E_8^3$                &                       & $D_5^0$                  & $1$ \\
\bottomrule
\end{tabular}
\end{table}

\subsection{\texorpdfstring{$\mu_n$-actions}{mu\_n-actions} on smooth points}

Let $B = \cO_{X,w}$, together with a $\mu_n$-action, be as in the beginning of Section \ref{sec:mu_p RDP},
and assume it is smooth.
Assume the $\mu_n$-action fixes $w$ and is symplectic at $w$.
	As shown in Lemma \ref{lem:mu_p RDP}(\ref{lem:mu_p RDP:fixed smooth symplectic}),
	there exists $j \in (\bZ/n\bZ)^*$, unique up to sign,
	such that the maximal ideal of $B$ is generated by two homogeneous elements 
	of respective weight $j$ and $-j$.
	(We say that the weights of the $\mu_n$-action on 
	the tangent space are $j$ and $-j$.)

	Now let $\tilde{Y}$ be the minimal resolution of $Y = X / \mu_n$ at $\pi(w)$,
	and let $\pi' \colon X' = X \times_Y \tilde{Y} \to \tilde{Y}$.
	Let $e_{k}$ ($k = 1, \dots, n-1$) be the exceptional curves of $\tilde{Y}$,
	ordered in a way that $e_{k} \cap e_{k'} \neq \emptyset$ if and only if $\abs{k - k'} \leq 1$.
	The $\mu_n$-action induces a decomposition $\pi'_* \cO_{X'} = \bigoplus_{i \in \bZ/n\bZ} (\pi'_* \cO_{X'})_i$. 
	Let $I_i := \Image( ((\pi'_* \cO_{X'})_i)^{\otimes n} \to \cO_{\tilde{Y}})$ for $i = 0, \dots, n-1$.
	Then $I_i$ are described as follows.
	(Clearly $I_0 = \cO_{X'}$.)

\begin{lem} \label{lem:A}
	$(\pi'_* \cO_{X'})_i$ and $I_i$ are invertible sheaves.
	After possibly reversing the ordering of the exceptional curves, we have an equality
	\[
	I_i = \cO_{\tilde{Y}} \Bigl( -\sum_k f_n \bigl( (j^{-1} i \bmod n), k \bigr) e_k \Bigr).
	\]
	for each $i = 1, \dots, n-1$.
\end{lem}
	Here $j^{-1} i \bmod n$ denotes the unique integer $h \in \set{0, 1, \dots, n-1}$ 
	satisfying $h j \equiv i \pmod{n}$,
	and the function $\map{f_n}{\set{1, 2, \dots, n-1}^2}{\bZ}$
	is defined by 
	\[
	f_n(h, k) = \begin{cases}
	h k & (k \leq n - h), \\
	(n - h) (n - k) & (k \geq n - h) .
	\end{cases}
	\]
\begin{proof}
	Straightforward.
\end{proof}

\section{Tame quotients of K3 surfaces and abelian surfaces} \label{sec:tame quotient}

The following fact should be known to experts
(for example, if $X$ is a K3 surface in characteristic $0$ and $G$ is symplectic and commutative, then this is a result of Nikulin).
We give a proof since we could not find a complete reference (covering all characteristics).

\begin{thm} \label{thm:quotient:tame symplectic}
Let $X$ be either an abelian surface or an RDP K3 surface in characteristic $p \geq 0$,
and $G$ a finite group of order not divisible by $p$ acting on $X$. 

If $X$ is an RDP K3 surface and $G$ is symplectic, then the quotient $X/G$ is an RDP K3 surface.

If $X$ is an abelian surface and $G$ is symplectic, then $X/G$ is either an abelian surface or an RDP K3 surface.

If $X$ is an RDP K3 surface and $G$ is non-symplectic, then $X/G$ is either an RDP Enriques surface or a rational surface.

If $X$ is an abelian surface and $G$ is non-symplectic, then $X/G$ is either an RDP Enriques surface, a (quasi-)hyperelliptic surface, a surface birational to a ruled surface, or a rational surface.
\end{thm}

\begin{proof}
By Proposition \ref{prop:tame}, we may assume $X$ is smooth.
Let $\map{\pi}{X}{X/G = Y}$ be the quotient morphism
and $\tilde{Y} \to Y$ the minimal resolution.

We have $b_1(\tilde{Y}) = b_1(Y) \leq b_1(X)$, 
where $b_i(X) = \dim \Het^i(X, \bQ_l)$ is the $i$-th $l$-adic Betti number for an auxiliary prime $l \neq \charac k$.
Hence if $X$ is a K3 surface (hence $b_1(X) = 0$)
then $\tilde{Y}$ cannot be abelian, (quasi-)hyperelliptic, nor non-rational ruled.

\medskip

First suppose $G$ acts non-symplectically.
If some nontrivial $g \in G$ satisfy $\dim \Fix(g) = 1$,
then by the usual ramification formula
$K_Y$ has negative coefficients at the corresponding divisors of $Y$,
hence $\kappa(\tilde{Y}) = -\infty$.
If $Y$ has a non-RDP singularity, 
then $K_{\tilde{Y}}$ has negative coefficients at the corresponding exceptional curves,
hence $\kappa(\tilde{Y}) = -\infty$.
In either case $\tilde{Y}$ is either ruled or rational.
Suppose neither is the case.
Then $K_Y$ is an RDP surface with numerically trivial $K_Y$.
Since we have 
\[ 
H^0(Y^{\sm}, \Omega_Y^2) = H^0(\pi^{-1}(Y^{\sm}), \Omega_X^2)^G = H^0(X, \Omega_X^2)^G = 0,
\]
$K_Y$ is not trivial.
Then by the classification of such surfaces (see Table \ref{table:kappa=0 surfaces}), 
$\tilde{Y}$ is either an Enriques surface or a (quasi-)hyperelliptic surface.
This settles the non-symplectic case.

\medskip

Now suppose $G$ acts symplectically.
By Proposition \ref{prop:tame}, 
$Y$ is an RDP surface with $K_Y$ trivial.
By the classification of surfaces with trivial canonical divisor,
$Y$ is an RDP K3 surface, an abelian surface, a non-classical RDP Enriques surface ($p = 2$), or a (quasi-)hyperelliptic surface ($p = 2,3$).
(Note that abelian and (quasi-)hyperelliptic surface admit no smooth rational curves.)

Since $Y$ has only RDP singularities, we have 
$h^1(\tilde{Y}, \cO_{\tilde{Y}}) = h^1(Y, \cO_Y)$.
Since $p \notdivides \card{G}$, 
$(\cO_X)^G \subset \cO_X$ is a direct summand
and hence we have $h^1(Y, \cO_Y) = h^1(Y, (\pi_* \cO_X)^G) \leq h^1(Y, \pi_* \cO_X) = h^1(X, \cO_X)$.
Therefore, if $X$ is a K3 surface then $Y$ cannot be a non-classical RDP Enriques surface.

It remains to show that if $X$ is an abelian surface then $Y$ cannot be a (quasi-)hyperelliptic surface.
(If $p \neq 2,3$, then this is clear since (quasi-)hyperelliptic surfaces always have nontrivial canonical divisor.)
If $X$ is an abelian surface and $Y$ is a (quasi-)hyperelliptic surface,
then since a (quasi-)hyperelliptic surface admit no smooth rational curves,
no element of $G \setminus \set{1}$ have fixed points.
It suffices to show that any fixed-point-free symplectic automorphism $g$ of an abelian surface $X$
of finite order not divisible by $p$ is a translation,
since the quotient of an abelian variety by a finite group of translations is an abelian variety.

Suppose $g$ is such an automorphism.
Endow $X$ with a group variety structure (i.e.\ choose an origin)
and write $g(x) = h(x) + a$ 
with $h$ an automorphism of the group variety (i.e.\ $h$ fixes the origin)
and $a$ a point.
Then $h$ is symplectic (since $g$ and the translation by $a$ are symplectic)
and of finite order dividing $\ord(g)$,
since $x = g^{\ord(g)}(x) = h^{\ord(g)}(x) + (h^{\ord(g)-1}(a) + \dots + a)$.
If $h = \id$ then $g$ is a translation.
Suppose $h \neq \id$. 
Then $h$ acts on the tangent space of each fixed point
via $\SL_2(k)$ (since $h$ is symplectic and of finite order not divisible by $p$),
hence $\Fix(h)$ is isolated.
Hence $h - \id$ has finite kernel and hence is surjective.
Let $x$ be a point with $h(x) - x = - a$.
Then $g(x) = x$, contradiction.
\end{proof}

\section{\texorpdfstring{$\mu_p$-quotients}{mu\_p-quotients} of RDP K3 surfaces and abelian surfaces} \label{sec:quotient}

The following theorems are the $\mu_p$-analogue of Theorem \ref{thm:quotient:tame symplectic}.

\begin{thm} \label{thm:quotient:symplectic}
The quotient of an RDP K3 surface by a symplectic $\mu_{p}$-action
is again an RDP K3 surface.
\end{thm}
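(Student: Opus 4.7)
The plan is to induct on $e$, reducing to the base case $e = 1$, which I handle separately for RDP K3 surfaces and abelian surfaces. In the inductive step, I would observe that $\mu_p \subset \mu_{p^e}$ inherits a symplectic action, since the $\mu_p$-weight of the global $2$-form is the reduction modulo $p$ of the $\mu_{p^e}$-weight, namely $0$. By the base case, $X/\mu_p$ is then an RDP K3 (resp.\ abelian) surface, and the quotient $\mu_{p^e}/\mu_p \cong \mu_{p^{e-1}}$ acts on $X/\mu_p$. To check that this induced action is again symplectic, I would apply Lemma \ref{lem:G/N} on the open complement of the codimension-$2$ $\mu_p$-fixed locus; this identifies the weight-$0$ part of $H^0(X^{\sm}, \Omega^2)$ under $\mu_p$ with $H^0((X/\mu_p)^{\sm}, \Omega^2)$ as $\mu_{p^{e-1}}$-representations, so the weight-$0$ hypothesis under $\mu_{p^e}$ transfers to the quotient action, and the inductive hypothesis closes the argument.

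For the K3 base case $e = 1$, write $\pi \colon X \to Y = X/\mu_p$ and let $D$ be the corresponding derivation. In Step A, I show $Y$ is an RDP surface: non-fixed points give smooth or RDP images by Theorem \ref{thm:mu_p RDP}(\ref{thm:mu_p RDP:non-fixed RDP}); fixed points are isolated by symplecticness (Lemma \ref{lem:mu_p RDP}(\ref{lem:mu_p RDP:fixed smooth symplectic}) at smooth points and Remark \ref{rem:D=i} at RDPs) and their images are RDPs by Theorem \ref{thm:mu_p RDP}(\ref{thm:mu_p RDP:fixed RDP symplectic}). In Step B, I produce a nonvanishing global $2$-form on $\tilde Y$: the nonvanishing $\omega \in H^0(X^{\sm}, \Omega^2)$ has weight $0$ by hypothesis, and on the $\mu_p$-stable open $U = X^{\sm} \setminus \Fix(D)$ (whose complement in $X$ is finite) Proposition \ref{prop:2-forms} produces a nonvanishing descent $\psi$ on $U/\mu_p$; I then extend $\psi$ across the finitely many points of $Y^{\sm} \setminus U/\mu_p$, using the crepancy of $X \times_Y \tilde Y \to X$ from Theorem \ref{thm:mu_p RDP}(\ref{thm:mu_p RDP:non-fixed RDP}), and finally onto $\tilde Y$ via Proposition \ref{prop:2-forms of RDP}. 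In Step C, I show $H^1(\tilde Y, \cO_{\tilde Y}) = H^1(Y, \cO_Y) = 0$: the first equality uses rationality of RDPs, and the second follows from the multiplicative-type decomposition $\pi_*\cO_X = \bigoplus_i (\pi_*\cO_X)_i$ with weight-$0$ summand $\cO_Y$, which embeds $H^1(Y, \cO_Y)$ into $H^1(X, \cO_X) = 0$. Combining the three steps yields that $\tilde Y$ is a K3 surface.

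The abelian base case is cleaner: the tangent sheaf of an abelian surface is trivial with global sections exactly $\Lie X$, so any multiplicative derivation $D$ on $X$ is translation-invariant and (via the restricted Lie algebra structure) corresponds to a subgroup scheme $\mu_p \hookrightarrow X[p]$ acting by translation. Such an action is fixed-point-free, so $Y = X/\mu_p$ is smooth and inherits an abelian-variety structure from $X$. The induction then propagates this to general $e$.

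I expect the main difficulty to be Step B: while $\psi$ extends regularly across $Y^{\sm} \setminus U/\mu_p$ by normality, ensuring the extension is genuinely \emph{nonvanishing} requires carefully combining Propositions \ref{prop:2-forms} and \ref{prop:2-forms of RDP} with the crepancy assertion in Theorem \ref{thm:mu_p RDP}(\ref{thm:mu_p RDP:non-fixed RDP}), ultimately reducing the nonvanishingness at the additional points to that of $\omega$ on $\tilde X$. A secondary subtlety in the induction is verifying that the $\mu_{p^{e-1}}$-action on $X/\mu_p$ is genuinely symplectic in the sense of Definition \ref{def:symplectic local} after extension across the singular and fixed loci, which is handled by the weight compatibility of Lemma \ref{lem:G/N}.
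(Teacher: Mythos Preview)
Your proof is correct and shares the paper's overall architecture: induction on $e$ via Lemma~\ref{lem:G/N}, with the $e=1$ case handled by Theorem~\ref{thm:mu_p RDP} (showing $Y$ is an RDP surface) and Proposition~\ref{prop:2-forms} (producing the $2$-form on the quotient). The difference is in how you finish the base case. The paper observes that $K_Y$ is trivial, invokes the classification of surfaces to conclude $Y$ is RDP K3 or abelian, and then separates the two possibilities by comparing the $\ell$-adic Betti number $b_1$ (using that the purely inseparable map $X \to Y$ is a universal homeomorphism). You instead verify the K3 axioms for $\tilde Y$ directly: Step~B gives $\Omega^2_{\tilde Y} \cong \cO_{\tilde Y}$, and Step~C gives $H^1(\tilde Y, \cO_{\tilde Y}) = 0$ via the direct-summand splitting $\cO_Y \hookrightarrow \pi_*\cO_X$ together with rationality of RDPs; the abelian case you treat separately via the translation description (which is the paper's Remark~\ref{rem:mu_p abelian surface}). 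Your route avoids both the surface classification theorem and \'etale cohomology, at the price of handling the two cases non-uniformly; the paper's is more uniform but uses heavier input. Your worry about nonvanishing in Step~B is not a genuine obstacle: once $\psi$ is nonvanishing on the cofinite open $U/\mu_p \subset Y^{\sm}$, the divisor class $K_{Y^{\sm}}$ is already trivial (divisors are determined in codimension~$1$), hence $K_{\tilde Y}$ is trivial by Proposition~\ref{prop:2-forms of RDP}, and then any nonzero global section of $\Omega^2_{\tilde Y}$ is automatically nowhere-vanishing.
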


\begin{thm} \label{thm:quotient:non-symplectic}
	The quotient $X/\mu_p$ of an RDP K3 surface by a non-symplectic action of $\mu_p$
	is either a rational surface (possibly with non-RDP singularities) or an RDP Enriques surface.
	The quotient is an RDP Enriques surface if and only if the action is fixed-point-free,
	and this can happen only if $p = 2$. 
	
\end{thm}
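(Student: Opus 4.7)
The endomorphism ring $\End(X)$ is a discrete $\bZ$-algebra of finite rank, hence the group scheme $\sAut(X,0) = \End(X)^{\times}$ is \'etale, while the identity component of $\sAut(X)$ is the translation group scheme $X$ itself. Since $\mu_p$ is connected, any group-scheme homomorphism $\mu_p \to \sAut(X)$ lands in the identity component, so the $\mu_p$-action on $X$ is by translations. Translations preserve the translation-invariant $2$-form, so every $\mu_p$-action on an abelian surface is symplectic.

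\textbf{K3, fixed-point-free case.} Let $\pi \colon X \to Y = X/\mu_p$. As a $\mu_p$-torsor, $\pi_* \cO_X = \bigoplus_{i=0}^{p-1} L^{\otimes i}$ for a line bundle $L$ on $Y$ with $L^{\otimes p} \cong \cO_Y$, and Theorem \ref{thm:mu_p RDP}(\ref{thm:mu_p RDP:non-fixed RDP}) ensures $Y$ has only RDPs. Numerical triviality of $L$ gives $\chi(L^{\otimes i}) = \chi(\cO_Y)$ by Riemann--Roch, so $2 = \chi(\cO_X) = p\chi(\cO_Y)$, forcing $p = 2$ and $\chi(\cO_Y) = 1$. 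From $\pi^* K_Y = K_X = 0$ the class $K_Y$ is $p$-torsion, hence $K_{\tilde Y}$ is numerically trivial. Combined with $\chi(\cO_{\tilde Y}) = 1$, the Enriques--Bombieri--Mumford classification forces $\tilde Y$ to be Enriques, so $Y$ is an RDP Enriques surface.

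\textbf{K3 with fixed points.} Let $f \colon \tilde Y \to Y$ denote the minimal resolution; I will deduce rationality of $\tilde Y$ from Castelnuovo's criterion $q(\tilde Y) = P_2(\tilde Y) = 0$. For the irregularity: $(\pi_* \cO_X)_0 = \cO_Y$ is a direct summand, so $H^1(Y,\cO_Y) \hookrightarrow H^1(X,\cO_X) = 0$; since quotient singularities of smooth points and of RDPs by $\mu_p$-actions are rational (by Boutot-type reasoning, $\mu_p$ being linearly reductive, together with the explicit toric description at smooth points), $R^1 f_* \cO_{\tilde Y} = 0$ and $h^1(\cO_{\tilde Y}) = 0$. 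For the plurigenera, let $\omega$ generate $H^0(X^\sm, \Omega^2_X)$, of weight $i_0 \ne 0$. For any $m$ coprime to $p$, $\omega^{\otimes m}$ has weight $mi_0 \ne 0$, so the weight-$0$ summand of $H^0(X^\sm, (\Omega^2)^{\otimes m})$ vanishes; applying Proposition \ref{prop:2-forms} over the open locus where $\mu_p$ acts freely, and extending by Hartogs, yields $H^0(\tilde Y, mK_{\tilde Y}) = 0$. For $p$ odd this handles $m = 1,2$, giving $p_g = P_2 = 0$ and hence rationality.

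\textbf{Main obstacle.} In characteristic $p = 2$ one has $2i_0 \equiv 0 \pmod{2}$, so the weight argument fails for $\omega^{\otimes 2}$. I will handle two subcases separately. If the fixed locus has a divisorial component on the smooth part of $X$, the log-form version of Proposition \ref{prop:2-forms} shows that $\omega^{\otimes 2}$ descends only to a form with genuine pole along $\pi(\Delta)$: concretely, in local coordinates where $D = x\partial_x$ and $u = x^2$, one computes that $(dx \wedge dy)^{\otimes 2} = u \cdot (dy \wedge d\log x)^{\otimes 2}$ descends to $u^{-1}(dy \wedge du)^{\otimes 2}$, so the weight-$0$ generator has pole order $1$ along $u = 0$ and $H^0(Y^\sm, 2K_Y) = 0$. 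If the fixed locus consists only of isolated fixed RDPs of $X$, Theorem \ref{thm:mu_p RDP}(\ref{thm:mu_p RDP:fixed RDP non-symplectic}) produces non-canonical singularities on $Y$, and the discrepancy inequality $2K_{\tilde Y} = f^*(2K_Y) + \sum 2a_i E_i$ with some $a_i < 0$ shows that the descended $\omega_Y^{\otimes 2}$, being a generator of $2K_Y$ at the singular point, cannot extend as a section of $2K_{\tilde Y}$ across the corresponding exceptional divisor. In either subcase $P_2(\tilde Y) = 0$, completing the proof.
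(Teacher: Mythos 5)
Your abelian and fixed-point-free arguments are sound. The translation argument matches the paper's Remark \ref{rem:mu_p abelian surface} in spirit, and your Euler-characteristic computation $2 = \chi(\cO_X) = p\,\chi(\cO_Y)$ is a clean alternative route to $p = 2$: the paper instead gets $p=2$ from $2K_Y = 0$ together with $H^0(Y^{\sm}, nK_Y) = 0$ for $0 < n < p$, and separates Enriques from (quasi-)hyperelliptic quotients by $b_1$ rather than by $\chi$. (Do justify $\pi^*K_Y = K_X$, e.g.\ by the Rudakov--Shafarevich adjunction formula with empty divisorial fixed locus.) In the fixed-point case your $P_2$ computations are essentially right, though ``extending by Hartogs'' does not by itself carry a section across the divisorial part $\Delta$ of the fixed locus, which has codimension one; what actually saves the odd-$p$ argument is that the isomorphism of Proposition \ref{prop:2-forms} sends a form regular along $\pi(\Delta)$ to a weight-$0$ form that is regular (indeed vanishing to order $(p-1)m$) along $\Delta$, and you should say this explicitly.

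The genuine gap is the irregularity step. Castelnuovo--Zariski needs $q(\tilde Y) = h^1(\cO_{\tilde Y}) = 0$, and in positive characteristic $b_1 = 0$ does not imply $h^1(\cO) = 0$ (non-classical Enriques surfaces in characteristic $2$ have $b_1 = 0$ and $h^1(\cO) = 1$), so you genuinely need $R^1 f_* \cO_{\tilde Y} = 0$, i.e.\ that every singularity of $Y$ is rational. Your justification --- ``Boutot-type reasoning'' --- is a characteristic-zero theorem; in characteristic $p$ the splitting $\cO_Y \hookrightarrow \pi_*\cO_X$ does not automatically transfer rationality of singularities, and the points of $Y$ at issue are precisely those about which Theorem \ref{thm:mu_p RDP}(\ref{thm:mu_p RDP:fixed RDP non-symplectic}) tells you only that they are \emph{not} RDPs: images of isolated non-symplectic fixed points (which may be RDPs of $X$, so the ``explicit toric description at smooth points'' does not cover them), together with possible non-isolated fixed RDPs, a case Theorem \ref{thm:mu_p RDP} does not treat at all. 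Remark \ref{rem:wild} shows that quotient constructions in characteristic $p$ can produce elliptic, hence irrational, singularities, so this is not a formality. The paper sidesteps the question entirely: it never computes $q(\tilde Y)$, but shows $K_{\tilde Y} < 0$ up to torsion --- via $K_X = \pi^*K_Y + (p-1)\Delta$ when the fixed locus is divisorial, and via a negative discrepancy over the non-RDP singular point in the isolated case --- concludes $\kappa(\tilde Y) = -\infty$, and then excludes irrational ruled surfaces using $b_1(\tilde Y) = b_1(X) = 0$. To repair your proof, either supply an actual proof that the singularities of $Y$ are rational, or replace the Castelnuovo criterion by this Kodaira-dimension argument.
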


\begin{thm} \label{thm:quotient:abelian}
A $\mu_{p^e}$-action on an abelian variety
is always symplectic, 
in the sense that the $1$-dimensional space of top differential forms is of weight $0$,
and the quotient is again an abelian surface.
\end{thm}

\begin{proof}[Proof of Theorem \ref{thm:quotient:abelian}]
It suffices to consider $\mu_p$-actions.
A $\mu_p$-action on $A$ corresponds (\cite{SGA3-1}*{Th\'eor\`eme VII.7.2(ii)}) to an element $v \in H^0(A, T) \cong T_0 A$ satisfying $v^p = v$, 
and hence the action can be identified with the translation action by a subgroup scheme of $A$.
Then the quotient is again an abelian variety,
and hence this action is symplectic 
by Proposition \ref{prop:2-forms}.
\end{proof}

\begin{proof}[Proof of Theorem \ref{thm:quotient:symplectic}]
By Theorem \ref{thm:mu_p RDP}(\ref{thm:mu_p RDP:non-fixed RDP},\ref{thm:mu_p RDP:fixed RDP symplectic}),
$Y = X/\mu_p$ is an RDP surface.
Then by Proposition \ref{prop:2-forms}, $K_{Y}$ is trivial, and by the classification of surfaces $Y$ is either
an RDP K3 surface, an abelian surface, a non-classical RDP Enriques surface, or a (quasi-)hyperelliptic surface.

Since $\map{\pi}{X}{Y}$ is purely inseparable we have $\dim \Het^i(X, \bQ_l) = \dim \Het^i(Y, \bQ_l)$,
in particular $b_1(Y) = b_1(X) = 0$ (where $b_i = \dim \Het^i$ is the $i$-th Betti number).
Since $Y$ is an RDP surface and 
since $\cO_Y = (\pi_* \cO_X)_0$ is a direct summand of $\pi_* \cO_X$,
we have $h^1(\tilde{Y}, \cO_{\tilde{Y}}) = h^1(Y, \cO_Y) \leq h^1(Y, \pi_* \cO_X) = h^1(X, \cO_X) = 0$.
Hence $Y$ is an RDP K3 surface.
\end{proof}

\begin{proof}[Proof of Theorem \ref{thm:quotient:non-symplectic}]
Let $X$ be an RDP K3 surface 
equipped with a nontrivial non-symplectic $\mu_p$-action.
Let $Y = X/\mu_p$.
We have $b_1(Y) = b_1(X) = 0$,
hence as in the tame case (Section \ref{sec:tame quotient}), 
the minimal resolution $\tilde{Y}$ of $Y$ cannot be abelian, (quasi-)hyperelliptic, nor non-rational ruled.

If $D$ has non-isolated fixed points then, 
by the Rudakov--Shafarevich formula $K_X \sim (p-1) \divisorialfix{D} + \pi^* K_Y$
\cite{Rudakov--Shafarevich:inseparable}*{Corollary 1 to Proposition 3}, or by Proposition \ref{prop:2-forms},
$\kappa(\tilde{Y}) = -\infty$.
If $D$ has an isolated fixed point $w \in X$, then 
by Theorem \ref{thm:mu_p RDP}(\ref{thm:mu_p RDP:fixed RDP non-symplectic}), $\pi(w) \in Y$ is a non-RDP singularity,
and then hence $\kappa(\tilde{Y}) = -\infty$.
In either case $Y$ is a rational surface.

Now assume $D$ is fixed-point-free.
Then by Theorem \ref{thm:mu_p RDP}(\ref{thm:mu_p RDP:non-fixed RDP}) 
$Y$ is an RDP surface and, 
by the Rudakov--Shafarevich formula, $K_{Y}$ is torsion.
Moreover it follows from Proposition \ref{prop:2-forms}
that the space $H^0(Y^{\sm}, (\Omega^2)^{\otimes n})$ is $0$ if $0 < n < p$ and
is generated by a non-vanishing multicanonical form if $n = p$.
Thus $K_{\tilde{Y}}$ is nonzero and $p$-torsion.
By the classification of surfaces
it follows that $Y$ is an RDP Enriques surface. 
Then since $2K_{Y} \sim 0$ and $H^0(Y^{\sm}, \cO(n K_{Y})) = 0$ for $0 < n < p$, we have $p = 2$.
\end{proof}

There is also the following relation with the height of K3 surfaces.
The height is an invariant of a K3 surface in positive characteristic 
which is either $\infty$ or an integer in $\set{1, \dots, 10}$.
	See Section \ref{sec:order} for more details.

\begin{cor}	\label{cor:quotient:height}
	Let $X$ be an RDP K3 surface in characteristic $p$ equipped with a nontrivial $\mu_p$-action.
	If $X$ is of finite height, then the action is symplectic and the quotient is an RDP K3 surface.
\end{cor}
\begin{proof}%[Proof of Corollary \ref{cor:quotient:height}]
We assume the action is non-symplectic and show that then $X$ is not of finite height.
By Theorem \ref{thm:quotient:non-symplectic}, the quotient  $Y = X / \mu_p$ is a rational surface or an RDP Enriques surface.
	Hence $X$ admits a purely inseparable finite morphism $\thpower{Y}{1/p} \to X$ from a rational surface or an RDP Enriques surface.
	Hence $\Het^2(X, \bQ_l)$, which is isomorphic to $H^2(\thpower{Y}{1/p}, \bQ_l)$, is generated by algebraic cycles,
	which is impossible if $X$ is of finite height by Lemma \ref{lem:H2 of K3}(\ref{lem:H2 of K3:bound of height}). 
\end{proof}
\begin{rem}	
	In a subsequent paper \cite{Matsumoto:k3rdpht}*{Theorem 1.3},
	we prove that also the converse holds,
	and we moreover determine the height in terms of the singularities of $X$ and $Y$.
\end{rem}

We call a proper birational morphism $X' \to X$ between RDP surfaces to be a \emph{partial resolution} if it is dominated by the minimal resolution $\tilde{X}$ of $X$.
The following proposition often enables us to reduce assertions on $\mu_p$-actions to simpler cases.

\begin{prop} \label{prop:to maximal}
Let $X$ be an RDP surface equipped with a $\mu_p$-action.
\begin{enumerate}
\item \label{prop:to maximal:existence} Among partial resolutions of $X$ to which the $\mu_p$-action extends,
there exists a unique maximal one, which we call the maximal partial resolution of $X$.
\item \label{prop:to maximal:characterization} A partial resolution $X' \to X$ is maximal if and only if it satisfies the property
$\Sing(X') \cap \pi^{-1}(\Sing(X'/\mu_p)) = \emptyset$.
\item \label{prop:to maximal:fixed-point-free} The action on $X$ is fixed-point-free if and only if the action on the maximal partial resolution is fixed-point-free.
\item \label{prop:to maximal:symplectic} Suppose $X$ is an RDP K3 surface.
The action on $X$ is symplectic if and only if the action on the maximal partial resolution is symplectic.
\end{enumerate}
\end{prop}
We say that an RDP surface $X$ equipped with a $\mu_p$-action is \emph{maximal} if it is the maximal partial resolution of itself.

\begin{proof}%[Proof of Proposition \ref{prop:to maximal}]
Note that isomorphism classes of partial resolutions are in one-to-one correspondence to subsets of the set of exceptional curves of $\tilde{X} \to X$.

(\ref{prop:to maximal:existence})
Let $X'$ be a partial resolution.
Let $\tilde{D}$ and $D'$ be the rational derivations induced by $D$ on $\tilde{X}$ and $X'$.
Clearly, $D'$ is regular and thus corresponds to a $\mu_p$-action 
if and only if the coefficient of $\divisorialfix{D'}$ for each exceptional curve of $X' \to X$ is non-negative.
Therefore, the contraction of all exceptional curves on $\tilde{X}$ with negative coefficients in $\divisorialfix{\tilde{D}}$ is the maximal partial resolution.

(\ref{prop:to maximal:characterization})
First suppose $X'$ does not satisfy the property.
There is an RDP $w \in X'$ such that $\pi(w)$ is not smooth.
If $w$ is a fixed RDP, then let $X'_1 = \Bl_w X'$.
If $w$ is a non-fixed RDP, then let $X'_1 = X' \times_{Y'} Y'_1$,
where $Y'$ is the $\mu_p$-quotient of $X'$ and $Y'_1 \to Y'$ is the minimal resolution at $\pi(w)$.
Then the $\mu_p$-action extends to $X'_1$ (by Proposition \ref{prop:fixedpoints} in the former case, clear in the latter case)
and $X'_1$ is a partial resolution of $X'$ (clear in the former case, by Theorem \ref{thm:mu_p RDP}(\ref{thm:mu_p RDP:non-fixed RDP}) in the latter case).
Thus $X'$ is not maximal.

Conversely, suppose $X'$ satisfies the property.
Let $w \in \Sing(X')$. 
To show that $X'$ is maximal, it suffices to show that 
every exceptional curve of $\tilde{X}$ above $w$ appears in $\divisorialfix{\tilde{D}}$ with negative coefficient, since then it must be contracted in the maximal partial resolution.
	This follows from the Rudakov--Shafarevich formula 
	$K_{\tilde{X}} \sim (p-1) \divisorialfix{\tilde{D}} + \pi^* K_{\tilde{Y}}$ \cite{Rudakov--Shafarevich:inseparable}*{Corollary 1 to Proposition 3},
	where $\tilde{Y} = \tilde{X}/\mu_p$:
	For each exceptional curve, 
	its coefficient in $K_{\tilde{X}}$ (resp.\ $\pi^* K_{\tilde{Y}}$) 
	is $0$ (resp.\ positive) since $\tilde{X}$ is the minimal resolution of the RDP $w$ (resp.\ since $\pi(w)$ is a smooth point).
Alternatively, one can use the explicit computation of $\divisorialfix{\tilde{D}}$ given in Lemma \ref{lem:above non-fixed RDP} below.
($w$ is non-fixed since $\pi(w)$ is a smooth point (Theorem \ref{thm:mu_p RDP}(\ref{thm:mu_p RDP:fixed RDP symplectic},\ref{thm:mu_p RDP:fixed RDP non-symplectic})).)

We also observe that the maximal partial resolution of $X$ can be constructed by 
repeatedly applying the procedure of constructing $X_1'$ from $X'$.
Indeed, the total index of RDPs strictly decreases through the procedure.

(\ref{prop:to maximal:fixed-point-free})
It suffices to show that the procedure above preserves the (non-)fixed-point-freeness.
If $w$ is a non-fixed RDP 
then there is no fixed point above $w$ by the functoriality of the fixed point scheme.
If $w$ is a fixed RDP then there is a fixed point above $w$ by Corollary \ref{cor:fixed point above}.

(\ref{prop:to maximal:symplectic})
The spaces of $2$-forms in question are naturally isomorphic.
\end{proof}

\begin{lem} \label{lem:above non-fixed RDP}
	Let $X$ be an RDP surface
	and $D$ a derivation of multiplicative type.
	Let $w \in X$ be a non-fixed RDP of index $n$ and suppose the image of $w$ is a smooth point.
	Let $\tilde X$ be the minimal resolution at $w$
	and $\tilde D$ be the induced rational derivation on $\tilde X$.
	Let $\divisorialfix{\tilde{D}}$ and $\isolatedfix{\tilde{D}}$ be the divisorial and isolated fixed locus of $\tilde D$ above $w$.
	Then every exceptional curve of $\tilde{X}$ above $w$ appear in $\divisorialfix{\tilde{D}}$ with negative coefficient,
	and we have $\deg \isolatedfix{\tilde{D}} = \frac{p-2}{p-1} n$
	and $\divisorialfix{\tilde{D}}^2 = - \frac{2}{p-1} n$.
\end{lem}
\begin{proof}
	We compute $\divisorialfix{\tilde{D}}$ and $\isolatedfix{\tilde{D}}$ by a straightforward calculation using the classification of
	non-fixed RDPs given in Proposition \ref{prop:non-fixed mu_p RDP}.
	See \cite{Matsumoto:k3alphap}*{Lemma 3.11} for the result.
	Then the assertions follow.
\end{proof}

\begin{prop} \label{prop:number of fixed points}
	Suppose each of $X$ and $Y$ is either an RDP K3 surface or an RDP Enriques surface.
	Let $\pi \colon X \to Y$ be a $\mu_p$-quotient morphism.
	Suppose $X$ is maximal with respect to the $\mu_p$-action.
	Then the total index $N_1$ of non-fixed RDPs 
	and the number $N_2$ of fixed points on $X$
	are as follows.
	\[
	(N_1, N_2) = \begin{cases}
	(24 \frac{p-1}{p+1}, 24 \frac{1}{p+1})                & \text{if $(\tilde X, \tilde Y)$ is $(\textup{K3},\textup{K3})$}, \\
	(12 \frac{2p-1}{p+1}, 12 \frac{p-2}{p^2-1}) = (12, 0) & \text{if $(\tilde X, \tilde Y)$ is $(\textup{K3},\textup{Enr})$}, \\
	(12 \frac{p-2}{p+1}, 12 \frac{2p-1}{p^2-1}) = (0, 12) & \text{if $(\tilde X, \tilde Y)$ is $(\textup{Enr},\textup{K3})$}, \\
	(12 \frac{p-1}{p+1}, 12 \frac{1}{p+1})                & \text{if $(\tilde X, \tilde Y)$ is $(\textup{Enr},\textup{Enr})$}. \\
	\end{cases}
	\]
	In the cases where $(\tilde{X}, \tilde{Y})$ is 
	$(\textup{K3}, \textup{Enr})$ or $(\textup{Enr}, \textup{K3})$,
	only $p = 2$ is possible.
\end{prop}

\begin{proof}
	Let $D$ be the corresponding derivation of multiplicative type.
	Since $\tilde{Y}$ is of Kodaira dimension $0$, 
	$\Fix(D)$ consists only of isolated points (possibly none).
	
	Define the rational derivation $\tilde D$ on $\tilde{X}$ as in Lemma \ref{lem:above non-fixed RDP}.
	Since the fixed locus of $\tilde D$ consists of those above non-fixed RDPs on $X$ and the $0$-cycle of fixed points on $X$,
	by Lemma \ref{lem:above non-fixed RDP} we have
	\[
	\divisorialfix{\tilde{D}}^2 = - \frac{2}{p-1} N_1, \quad
	\deg \isolatedfix{\tilde{D}} = N_2 + \frac{p-2}{p-1} N_1,
	\]
	and by Katsura--Takeda formula (Proposition \ref{prop:globalvectorfields}) we have
	\[
	\deg c_2(\tilde{X})
	= (N_2 + \frac{p-2}{p-1} N_1) + 0 + \frac{2}{p-1} N_1 
	= N_2 + \frac{p}{p-1} N_1.
	\]
	
	On the other hand,
	we have $\dim \Het^2(X, \bQ_l) = \dim \Het^2(\tilde{X}, \bQ_l) - N_1 = b_2(\tilde{X}) - N_1$.
	Since $Y$ is an RDP surface 
	whose RDPs are precisely the images (which are $A_{p-1}$) of the fixed smooth points of $X$,
	we have $\dim \Het^2(Y, \bQ_l) = \dim \Het^2(\tilde{Y}, \bQ_l) - (p-1)N_2 = b_2(\tilde{Y}) - (p-1)N_2$.
	Since $X \to Y$ is purely inseparable we have $\dim \Het^2(X, \bQ_l) = \dim \Het^2(Y, \bQ_l)$.
	Therefore we have $(p-1) N_2 - N_1 = b_2(\tilde{Y}) - b_2(\tilde{X})$.
	
	Combining the two equalities we determine $(N_1,N_2)$ in terms of $p$.
	In two cases only $p = 2$ is possible 
	since $12 \frac{p-2}{p^2-1}$ or $12 \frac{2p-1}{p^2-1}$ must be an integer.
\end{proof}

\section{Possible orders of symplectic \texorpdfstring{$\mu_n$-actions}{mu\_n-actions} on RDP K3 surfaces} \label{sec:symplectic order}

The following theorem is again parallel to the case of automorphisms of finite tame order,
but the proof (for $\mu_p$-actions) is quite different.

\begin{thm} \label{thm:symplectic mu_n}
Let $X$ be an RDP K3 surface in characteristic $p$ equipped with a symplectic $\mu_{n}$-action ($n > 1$, divisible by $p$ or not).
Then,
\begin{enumerate}
\item \label{thm:symplectic mu_n:bound}
$n \leq 8$.

\item \label{thm:symplectic mu_n:fixedpoints}
The number of fixed points, counted with the multiplicities defined after Proposition \ref{prop:symplectic mu_n RDP},
depends only on $n$ and is as in Table \ref{table:symplectic mu_n K3}.
%is equal to $(24/n) \prod_{l:\text{prime},l \divides n} (l/(l+1))$
%(i.e.\ $N = 8,6,4,4,2,3,2$ for $n = 2,3,4,5,6,7,8$ respectively).

\item \label{thm:symplectic mu_n:eigenvalues}
Assume all fixed points are smooth points.
Then the decomposition of $\bigoplus_{w \in \Fix(\mu_n)} T^*_w X$ with respect to the $\mu_n$-action
is concentrated on the subset $(\bZ/n\bZ)^* \subset \bZ/n\bZ$, 
and for every $i \in (\bZ/n\bZ)^*$ the $i$-th summand has dimension
as in Table \ref{table:symplectic mu_n K3}.
% equal to $2N / \phi(n) = (48/n^2) \prod_{l:\text{prime},l \divides n} (l^2/(l^2-1))$
%(i.e.\ $ = 16,6,4,2,2,1,1$ for $n = 2,3,4,5,6,7,8$ respectively).
\end{enumerate}
\end{thm}

\begin{table} \label{table:symplectic mu_n K3}
\caption{Symplectic $\mu_n$-actions on K3 surfaces}
\begin{tabular}{cccccccc}
\toprule
$n$ & $2$ & $3$ & $4$ & $5$ & $6$ & $7$ & $8$ \\
\midrule
number of fixed points & $8$ & $6$ & $4$ & $4$ & $2$ & $3$ & $2$ \\
dimension of each summand & $16$ & $6$ & $4$ & $2$ & $2$ & $1$ & $1$ \\
\bottomrule
\end{tabular}
\end{table}

If $p \notdivides n$ (resp.\ if $n = p$)
then assertion (\ref{thm:symplectic mu_n:eigenvalues}) means that 
every primitive $n$-th root of $1$ (resp.\ every element of $\bF_p^*$)
appears as an eigenvalue of a fixed generator of $\mu_n \cong \bZ/n\bZ$ (resp.\ of the corresponding derivation)
on the space $\bigoplus_{w \in \Fix(\mu_n)} T^*_w X$
with equal multiplicity.

For each $p$ and each $n \leq 8$ there indeed exists 
an RDP K3 surface equipped with a symplectic $\mu_{n}$-action in characteristic $p$. 
See Section \ref{subsec:examples:symplectic} for explicit examples.

\begin{proof}[Proof of Theorem \ref{thm:symplectic mu_n} for the case $p \notdivides n$]
We may assume $X$ is smooth.

Assertions (\ref{thm:symplectic mu_n:bound}) and (\ref{thm:symplectic mu_n:fixedpoints}) are proved by Nikulin \cite{Nikulin:auto}*{Section 5} ($p = 0$) and 
Dolgachev--Keum \cite{Dolgachev--Keum:auto}*{Theorem 3.3} ($p > 0$).
(Both proofs overlooked the case $n = 14$, 
but their arguments for the non-existence of the case $n = 15$ apply to case $n = 14$.)

(\ref{thm:symplectic mu_n:eigenvalues})
(If $p = 0$ then this follows from the argument in \cite{Mukai:automorphismsK3}*{Proposition 1.2}. We give another proof, applicable to all $p \geq 0$.)

Let $w \in X$ and let $\mu_r = \Stab(w) \subset \mu_n$ be its stabilizer group.
Suppose the decomposition of $T^*_w X$ ($ = \idealm_w / \idealm_w^2$) with respect to the $\mu_r$-action is concentrated on two (not necessarily distinct) weights $j_1, j_2 \in \bZ/r\bZ$.
Then since the action on $\Omega^2_{X,w} \cong \bigwedge^2 T^*_w X$ is trivial we have $j_1 + j_2 = 0$.
We have $j_1,j_2 \in (\bZ/r\bZ)^*$, since if $\gcd(j_1, r) = \gcd(j_2, r) = r' > 1$ then $\mu_{r'}$ acts trivially,
contradicting the assumption that the action is faithful.
This already proves the assertion if $n = 2,3,4,6$, since up to sign there is only one element in $(\bZ/n\bZ)^*$.

For each divisor $r \neq 1$ of $n$ 
and for each $0 \leq j \leq \floor{r/2}$, 
let $S_{r,j}$ be the set of the points $w \in X$
with $\Stab(w) = \mu_r$
and with $\mu_r$ acting on $T^*_w X$ by weights $j$ and $-j$.
Then $S_{r,j}$ is a finite set and it is empty if $j \not\in (\bZ/r\bZ)^*$.
Let $\tilde{S}_{r,j} = S_{r,j} / \mu_n$ be the set of $\mu_n$-orbits of points of $S_{r,j}$.
Let $N_{r,j} = \card{S_{r,j}}$ and $\tilde{N}_{r,j} = \card{\tilde{S}_{r,j}} = N_{r,j} / (n/r)$.

Let $\map{\rho}{\tilde{Y}}{Y = X / \mu_n}$ be the minimal resolution (then $\tilde{Y}$ is a smooth K3 surface),
and let $\pi' \colon X' = X \times_Y \tilde{Y} \to \tilde{Y}$.
Let $\pi'_* \cO_{X'} = \bigoplus_{i \in \bZ/n\bZ} (\pi'_* \cO_{X'})_i$ be the decomposition induced by the $\mu_n$-action.
Then by Lemma \ref{lem:A}, $(\pi'_* \cO_{X'})_i$ are invertible sheaves, and they are described as follows.
For each $i \in \bZ/n\bZ$, let $C_i$ be the corresponding class of Cartier divisors.
For each orbit $w \in \tilde{S}_{r,j}$, its image $\pi(w)$ is an RDP of type $A_{r-1}$,
and let $e_{w,k}$ ($k = 1, \dots, r-1$) be the exceptional curves in $\tilde{Y}$ above $\pi(w)$,
ordered in a way that $e_{w,k} \cap e_{w,k'} \neq \emptyset$ if and only if $\abs{k - k'} \leq 1$.
Then after possibly reversing the ordering we have a linear equivalence
\[ 
- r C_i \sim
\sum_{r \divides n, r \neq 1} \sum_{j = 0}^{\floor{r/2}} \sum_{w \in \tilde{S}_{r,j}} \sum_{k = 1}^{r-1} 
{f_r(({j^{-1}} i \bmod r), k)} e_{w,k}
\]
 (see Lemma \ref{lem:A} for the definition of the function $\map{f_r}{\set{1, \dots, r-1}^2}{\bZ}$) for each $i \neq 0$.

Let $m$ be any integer with $1 \leq m \leq r-1$.
Using the equality
\[
 \Biggl( \sum_{k=1}^{r-1} \frac{f_r(m,k)}{r} e_{w,k} \Biggr) \cdot e_{w,k'} 
 = \begin{cases}
 -1 & \text{if $k' = r - m$,} \\
  0 & \text{otherwise,}
 \end{cases}
\]
we compute that 
\[ \Biggl( \sum_{k=1}^{r-1} \frac{f_r(m,k)}{r} e_{w,k} \Biggr)^2 = -\frac{m(r-m)}{r}. \]
Hence we have 
\[ - C_i^2 = \sum_{r \divides n, r \neq 1} \sum_{j = 0}^{\floor{r/2}} 
\tilde{N}_{r,j} \cdot \frac{({j^{-1}}i \bmod r)(r - {j^{-1}}i \bmod r)}{r} \]
and this must belong to $2 \bZ$.

Assume $n = 5$.
Then we have $N_{5,j} = \tilde{N}_{5,j}$, $N_{5,1} + N_{5,2} = 4$, and
\[ -C_1^2 = \frac{4}{5} N_{5,1} + \frac{6}{5} N_{5,2} \in 2 \bZ. \]
Hence $(N_{5,1}, N_{5,2}) = (2, 2)$.

Assume $n = 7$. 
Then we have $N_{7,j} = \tilde{N}_{7,j}$, $N_{7,1} + N_{7,2} + N_{7,3} = 3$, and
\[ 
-C_1^2 = \frac{6}{7} N_{7,1} + \frac{12}{7} N_{7,2} + \frac{10}{7} N_{7,3} \in 2 \bZ.
\]
Hence $(N_{7,1}, N_{7,2}, N_{7,3}) = (1, 1, 1)$.

Assume $n = 8$.
By assertion (\ref{thm:symplectic mu_n:fixedpoints}) for the cases $n = 2,4,8$, 
we have $\tilde{N}_{2,1} = 1$, $\tilde{N}_{4,1} = 1$, $\tilde{N}_{8,1} + \tilde{N}_{8,3} = 2$,
and 
\[
-C_1^2 = \frac{1}{2} \tilde{N}_{2,1} + 
         \frac{3}{4} \tilde{N}_{4,1} + 
         \frac{7}{8} \tilde{N}_{8,1} + \frac{15}{8} \tilde{N}_{8,3} 
 \in 2 \bZ.
\]
Hence $({N}_{8,1}, {N}_{8,3}) = (\tilde{N}_{8,1}, \tilde{N}_{8,3}) = (1, 1)$.
\end{proof}

\begin{rem}
By above, we have $C_i^2 = -4$ for any $2 \leq n \leq 8$ and any $1 \leq i \leq n-1$. 
As we will see below, this holds also if $p$ divides $n$. 
This implies $\chi(\tilde{Y}, (\pi'_* \cO_{X'})_i) = 0$ for $i \neq 0$.
Then we obtain $\chi(Y, (\pi_* \cO_{X})_i) = 0$ for $i \neq 0$,
since $\rho_*((\pi'_* \cO_{X'})_i) = (\pi_* \cO_{X'})_i$
and $R^q \rho_*((\pi'_* \cO_{X'})_i) = 0$ for $q > 0$.
This is finer than the equality
\[
 \sum_{i \neq 0} \chi(Y, (\pi_* \cO_{X})_i)
 = \chi(X, \cO_X) - \chi(Y, \cO_Y) = 2 - 2 = 0.
\]
\end{rem}

\begin{proof}[Proof of Theorem \ref{thm:symplectic mu_n} for the case $n = p$]
We may assume that $X$ is maximal. 
(For assertion (\ref{thm:symplectic mu_n:fixedpoints}),
the multiplicity is by definition compatible with blow-ups at fixed points.) 
This means that all fixed points are smooth points,
and that the singularities of the quotient surface $Y$ are all $A_{p-1}$ and are precisely the images of the fixed points.
Let $D$ be the corresponding derivation.

As in the previous case,
let $\tilde{Y}$ be the minimal resolution of $Y$ (hence a smooth K3 surface)
and let $\pi' \colon X' = X \times_Y \tilde{Y} \to \tilde{Y}$.
The sheaf $\pi'_* \cO_{X'}$ admits a decomposition to invertible sheaves $(\pi'_* \cO_{X'})_i$.
For each $i \in \bZ/p\bZ$, let $C_i$ be the corresponding class of Cartier divisor.
As in the previous case we have 
\[ - C_i^2 = \sum_{j = 0}^{\floor{p/2}} 
N_{p,j} \cdot \frac{({j^{-1}}i \bmod p)(p - {j^{-1}}i \bmod p)}{p}. \]

By Proposition \ref{prop:number of fixed points} we have $N := \sum_{j} N_{p,j} = 24/(p+1)$.
Hence $p \in \set{2,3,5,7,11,23}$.
If $p = 23$ then $N = 1$ and 
the exceptional curves generate a negative definite sublattice of rank $p-1 = 22$ of the indefinite lattice $\Pic(\tilde{Y})$ of rank $\leq 22$,
contradiction.
If $p = 11$ then $N = 2$ and then $C_i^2$ (for any $i \in (\bZ/p\bZ)^*$) 
cannot be an integer since the sum of two nonzero squares in $\bF_{11}$ cannot be zero.
Hence we have $p \in \set{2,3,5,7}$,
and we can determine the multiplicities of the weights as in the $p \notdivides n$ case.
\end{proof}

\begin{cor} \label{cor:symplectic mu_n:fixed RDP}
Let $X$ be an RDP K3 surface equipped with a symplectic $\mu_{q}$-action with $q = 5,7$. Here, both $p = q$ and $p \neq q$ are allowed.
\begin{itemize}
\item If $q = 7$, then any fixed point is a smooth point.
\item If $q = 5$, then any fixed point is a smooth point or an RDP of type $A_1$.
\end{itemize}
\end{cor}
\begin{proof}
Let $w \in X$ be a fixed point. By Proposition \ref{prop:symplectic mu_n RDP}, $w$ is of type $A_{m-1}$ for some $m \geq 1$. 
Let $\pm i \in (\bZ/q\bZ)^*$ be the nonzero weights of $\idealm_w/\idealm_w^2$ 
with respect to the $\mu_q$-action.
Let $\tilde{X}$ be the minimal resolution of $X$ at $w$ (to which the $\mu_q$-action extends).
One can calculate the local equation to show that all (smooth) fixed point of $\tilde{X}$ above $w$ has weights $\pm i$.
Since there are $m$ such points,
it follows from Theorem \ref{thm:symplectic mu_n}(\ref{thm:symplectic mu_n:eigenvalues}) that $m \leq 1$ if $q = 7$ and $m \leq 2$ if $q = 5$.
\end{proof}

\begin{proof}[Proof of Theorem \ref{thm:symplectic mu_n} for the case $n = p^e$ ($e \geq 2$)]

For each $0 \leq j \leq e$,
let $\pi_j \colon X \to X_j = X/\mu_{p^j}$ be the quotient morphism by the subgroup scheme $\mu_{p^j} \subset \mu_{p^e}$,
and for each $0 \leq j \leq e-1$, let $D_j$ be the derivation on $X_j$ corresponding to the action of $\mu_{p^{j+1}}/\mu_{p^j}$.

Let $w \in X$ be a $\mu_p$-fixed point.
Let $\mu_{p^f} = \Stab(w)$ (then $1 \leq f \leq e$).
Then by Remark \ref{rem:check locally mu_p}, $\mu_{p^f}$ acts symplectically at $w$.
By Proposition \ref{prop:mu_p^e RDP}, 
either $w$ is of type $A_{m-1}$ for some $m \geq 1$ with $p^{e-f} \divides m$, or $w$ is $D_{2m+1}^{m-1}$ or $E_7^2$ or $E_8^3$ and $p^e = 4$.
(Again we use the convention that a smooth point is of type $A_0$.)
Then since each $D_j$ ($j < f$) is symplectic at $\pi_j(w)$, we observe that $\pi_f(w) \in X_f$ is of type $A_{p^e m - 1}$ or $D_5^0$ or $D_{2m+1}^{m-1}$.
(Since $X$ has a $\mu_p$-fixed point and since $X_f$ is an RDP K3 surface, this already implies $p^e - 1 < 22$.)

By Lemma \ref{lem:image of non-fixed point}, any preimage of any fixed point of $D_{j}$ is again fixed.
In other words, the fixed points of $D_j$ on $X_j$ are precisely the images of the $\mu_{p^{j+1}}$-fixed points on $X$.

For each $1 \leq f \leq e$,
let $\tilde{S}_f \subset X$ be the points with stabilizer equal to $\mu_{p^f}$.
For each $w \in \tilde{S}_f$, 
let $m(w)$ be its multiplicity of $w$ defined after Proposition \ref{prop:symplectic mu_n RDP}
with respect to the $\mu_{p^f}$-action.
Then, by Propositions \ref{prop:mu_p^e RDP} and \ref{prop:symplectic mu_n RDP},
we have $p^{e-f} \divides m(w)$.
Let $M_f = \sum_{w \in \tilde{S}_f} m(w)$ for each $1 \leq f \leq e$, then by above $p^{e-f} \divides M_f$.
Using the equality mentioned after Proposition \ref{prop:symplectic mu_n RDP}
and assertion (\ref{thm:symplectic mu_n:fixedpoints}) 
for $D_{e-1}$ and $D_{e-2}$ on $X_{e-1}$ and $X_{e-2}$,
we obtain $p^{e-1} M_e = p^{e-2} (M_e + M_{e-1}) = 24/(p+1)$,
hence $M_e = 24 / (p^{e-1}(p+1))$ and 
$M_{e-1} / p = 24 (p-1) / (p^{e}(p+1))$.
Since $M_{e-1} / p$ is an integer, $p^e$ divides $24$. 
Therefore $p^e = 2^2, 2^3$.
Moreover we obtain $M_f = p^{e-f} \cdot 24 (p-1) / (p^{e}(p+1))$ ($1 \leq f \leq e-1$)
by applying assertion (\ref{thm:symplectic mu_n:fixedpoints}) to $D_{j}$ ($0 \leq j \leq e-1$).

Assertion (\ref{thm:symplectic mu_n:eigenvalues}) is trivial if $n = 4$. 
Suppose $n = 8$.
For each $1 \leq f \leq 3$ and $0 \leq j \leq 2^f/2$, 
let $\tilde{S}_{2^f,j}$ be the set of the points with stabilizer $\mu_{2^f}$ and with primitive weights $\pm j \in (\bZ/2^f\bZ)^*$,
and let $\tilde{N}_{2^f,j} = (2^{e-f})^{-1} \sum_{w \in \tilde{S}_{2^f,j}} m(w)$.
We have $\sum_{j} \tilde{N}_{2^f,j} = (2^{e-f})^{-1} M_f$ for each $1 \leq f \leq e$.
Then we again have $\tilde{N}_{2,1} = 1$, $\tilde{N}_{4,1} = 1$, $\tilde{N}_{8,1} + \tilde{N}_{8,3} = 2$,
and 
\[
-C_1^2 = \frac{1}{2} \tilde{N}_{2,1} + 
        \frac{3}{4} \tilde{N}_{4,1} + 
        \frac{7}{8} \tilde{N}_{8,1} + \frac{15}{8} \tilde{N}_{8,3} 
 \in 2 \bZ.
\]
Hence $(\tilde{N}_{8,1}, \tilde{N}_{8,3}) = (1,1)$.
\end{proof}

\begin{proof}[Proof of Theorem \ref{thm:symplectic mu_n} for the remaining cases]
First we show that if $n = pq$ where $q$ is a prime $\neq p$, then $n = 6$.
We have $\mu_n = \mu_p \times \mu_q \cong \mu_p \times \bZ/q\bZ$.
We may assume that $X$ is maximal with respect to the $\mu_p$-action.
Let $\pi_q \colon X \to X_q = X / \mu_q$ and $\pi_p \colon X \to X_p = X / \mu_p$.
Note that $w \in X$ is fixed by the $\mu_p$-action if and only if $\pi_q(w) \in X_q$ is fixed by the $\mu_p$-action.
Let $a_1$ and $a_q$ be the number of $\mu_q$-orbits of length $1$ and $q$ of $\mu_p$-fixed points of $X$ (which are all smooth by assumption).
Then the $\mu_p$-fixed points of $X_q$ consists of $a_1$ points of type $A_{q-1}$ and $a_q$ smooth points. 
Applying assertion (\ref{thm:symplectic mu_n:fixedpoints}) to the $\mu_p$-actions on $X$ and $X_q$ we have $a_1 + q a_q = q a_1 + a_q = 24/(p+1)$.
Therefore $a_1 = a_q = 24/(p+1)(q+1)$
and hence $(a_1, \set{p, q }) = (2, \set{2, 3 }), (1, \set{2, 7 } ), (1, \set{3, 5 })$.

The cases $(a_1, \set{p, q }) = (1, \set{2, 7 } ), (1, \set{3, 5 })$ are impossible since,
letting $w \in X$ be the unique $\mu_{pq}$-fixed point (which is a smooth point),
if $pq = 14$ 
then $\pi_2(w) \in X_2$ is a $\mu_7$-fixed RDP of type $A_{1}$,
and if $pq = 15$ 
then $\pi_3(w) \in X_3$ is a $\mu_5$-fixed RDP of type $A_{2}$,
both contradicting Corollary \ref{cor:symplectic mu_n:fixed RDP}.

Now we consider general $n$. 
It remains to show that the cases $(p,n) = (2,12), (3,12)$ are impossible.

Assume $(p,n) = (3,12)$.
As above we may assume $X$ is maximal with respect to the $\mu_3$-action.
There are exactly six $\mu_3$-fixed points, all smooth.
By the above argument for $(p,n) = (3,6)$,
exactly two of them are $\mu_2$-fixed, and among the images of these two points in $X/\mu_2$ exactly one is $(\mu_4/\mu_2)$-fixed. 
This is impossible since non-$(\mu_4/\mu_2)$-fixed points in $X / \mu_2$ come by pairs.

Now assume $(p,n) = (2,12)$.
As in the proof of the $n = p^e$ case (applied to the $\mu_4$-action),
let $\tilde{S}_1$ be the set of $\mu_2$-fixed non-$\mu_4$-fixed points,
and then we have $M_1 = \sum_{w \in \tilde{S}_1} m(w) = 4$ and $2 \divides m(w)$.
Hence $\card{\tilde{S}_1}$ is $1$ or $2$.
Since the $\mu_3$-action on $X$ preserves this $1$- or $2$-point set $\tilde{S}_1$, 
it acts on $\tilde{S}_1$ trivially, hence fixes at least $4$ $\mu_2$-fixed points (counted with multiplicity $m(w)$), 
contradicting the observation $a_1 = 2$ for $\mu_6$-actions.

Assertion (\ref{thm:symplectic mu_n:eigenvalues}) for $n = 6$ is trivial.
\end{proof}

\section{Possible orders of \texorpdfstring{$\mu_n$-actions}{mu\_n-actions} on RDP K3 surfaces} \label{sec:order}

Let $\Scyc(p)$ (resp.\ $\Smu(p)$) be the set of positive integers $n$
for which there exists an RDP K3 surface equipped with an automorphism of order $n$ (resp.\ a $\mu_n$-action) in characteristic $p$.
We clearly have $\Scyc(0) = \Smu(0)$ and $\Scyc(p)^{p'} = \Smu(p)^{p'}$,
where $(-)^{p'}$ denotes the subset of prime-to-$p$ elements.

\begin{rem} \label{rem:Scyc}
Keum \cite{Keum:orders}*{Main Theorem} proved the following results on $\Scyc(p)$
(this set is denoted $\mathrm{Ord}_p$ in his paper).
The sets $\Scyc(p)$ for $p \neq 2,3$ are given by 
\begin{gather*}
\Scyc(0) = \set{n : \phi(n) \leq 20} \\
= \set{1, \dots, 22, 24, 25, 26, 27, 28, 30, 32, 33, 34, 36, 38, 40, 42, 44, 48, 50, 54, 60, 66},
\end{gather*}
and
\[
\Scyc(p) = \begin{cases}
\Scyc(0)  & \text{if $p = 7$ or $p \geq 23$,} \\
\Scyc(0) \setminus \set{p,2p} & \text{if $p = 13,17,19$,} \\
\Scyc(0) \setminus \set{44} & \text{if $p = 11$,} \\
\Scyc(0) \setminus \set{25,50,60} & \text{if $p = 5$.} \\
\end{cases}
\]
Moreover,
$\Scyc(p)^{p'} = \Scyc(0)^{p'}$ for all $p \geq 2$.
(The sets $\Scyc(2)$ and $\Scyc(3)$ are not determined.)
\end{rem}

In this section we determine the set $\Smu(p)$ for all $p$.
\begin{thm} \label{thm:mu_n}
We have 
\[
\Smu(p) = \begin{cases}
\Scyc(0) & \text{if $p \neq 2,3,5,11$,} \\
\Scyc(0) \setminus \set{33,66} & \text{if $p = 11$,} \\
\Scyc(0) \setminus \set{25,40,50} & \text{if $p = 5$,} \\
\Scyc(0) \setminus \set{27,33,48,54,66} & \text{if $p = 3$,} \\
\Scyc(0) \setminus \set{34,40,44,48,50,54,66} & \text{if $p = 2$.} 
\end{cases}
\]
In particular, there exists an RDP K3 surface equipped with a nontrivial $\mu_p$-action in characteristic $p$
if and only if $p \leq 19$.
\end{thm}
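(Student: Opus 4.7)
The plan is to prove the two directions of set equality separately. For existence (every claimed $n$ lies in $\Smu(p)$), the necessary constructions are deferred to Section~\ref{sec:examples}: in the tame cases ($p \nmid n$) they match Keum's examples for $\Scyc(0)^{p'}$, and in the wild cases one exhibits K3 surfaces with explicit equations admitting infinitesimal $\mu_{p^{\ord_p(n)}}$-symmetry compatible with an outer tame $\mu_r$-action.

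For non-existence, write $n = p^e r$ with $\gcd(r,p) = 1$ and suppose $\mu_n$ acts faithfully on an RDP K3 surface $X$ in characteristic $p$. Two general constraints are immediate. First, $\mu_r \subset \mu_n$ is étale and its action is an order-$r$ automorphism, so by Remark~\ref{rem:Scyc} one has $r \in \Scyc(p)$; in particular $\phi(r) \leq 20$, giving $n \leq 66$. Second, if $e \geq 1$ then Theorem~\ref{thm:symplectic mu_n} applied to $\mu_p \subset \mu_n$ (together with the quotient analysis of Theorem~\ref{thm:quotient:non-symplectic} when the $\mu_p$-action is non-symplectic) forces $p \leq 19$, and Proposition~\ref{prop:mu_p^e RDP} shows that for $e \geq 2$ the singularities carrying a nontrivial $\mu_{p^e}$-action are essentially of $A$-type only.

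The case analysis then examines each excluded value of $n$. The pure $p$-primary cases $n = 25$ ($p = 5$) and $n = 27$ ($p = 3$) follow from Proposition~\ref{prop:mu_p^e RDP} combined with a rank argument on $\NS$ of the resolved quotient, parallel to the elimination of $p = 11, 23$ in the proof of Theorem~\ref{thm:symplectic mu_n} for $n = p$. The mixed cases with $e = 1$ are handled by examining the quotient $Y = X / \mu_p$: if $\mu_p$ is symplectic, $Y$ is an RDP K3 surface carrying an order-$r$ automorphism that must respect the Nikulin--Dolgachev--Keum constraints; if $\mu_p$ is non-symplectic, $Y$ is rational or (for $p = 2$) Enriques, and one checks that no order-$r$ automorphism on such $Y$ lifts to a $\mu_n$-action on the K3 cover $X$. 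The mixed cases with $e \geq 2$ and $r > 1$ (such as $n = 40, 48$ in $p = 2$ and $n = 40, 50$ in $p = 5$) are handled by applying the pure $p$-primary analysis to $\mu_{p^e}$ and intersecting with the tame orbit structure of $\mu_r$. The main obstacle is the bookkeeping for the mixed cases, specifically tracking how the fixed-point configuration of the tame $\mu_r$-action interacts with the isolated fixed points of the wild $\mu_{p^e}$-action: the local RDP type at each common fixed point must simultaneously satisfy constraints from both group actions, and the count must match the Euler-characteristic budget of the resolved quotient.
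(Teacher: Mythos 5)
There is a genuine gap: your proposal never engages the mechanism that actually drives the paper's non-existence argument. The paper's key observation is that a smooth K3 surface admits no nontrivial $\mu_p$-action, so $X$ must carry an RDP, and after blowing up $\mu_{p^e}$-fixed RDPs one may assume some RDP $z$ is \emph{not} fixed by $\mu_{p^e}$. Proposition \ref{prop:non-fixed mu_n RDP} then classifies the local type and weights at $z$; in the generic ($A$-type) case one gets $z$ of type $A_{p^{e-f}m-1}$ whose image in the symplectic partial quotient is an $A_{p^{e}tm-1}$ point occurring in an orbit of length $r/s$, and the bound ``total index of RDPs on an RDP K3 surface $< 22$'' yields roughly $(p^e-1)r < 22$ (or $<44$ when $s=2t$). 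This single inequality eliminates almost all the excluded $n$ (including $25$, $27$, $44$, $54$, $66$, etc.) in one stroke. The constraints you do invoke are too weak for this: $r\in\Scyc(p)$ with $\phi(r)\le 20$ does not bound $n=p^e r$ at all (it says nothing about $p^e$), and Theorem \ref{thm:symplectic mu_n} gives $p\le 7$ only when the $\mu_p$-action is symplectic, while Theorem \ref{thm:quotient:non-symplectic} gives no bound on $p$ in the non-symplectic case. Both $\phi(n)\le 20$ and $p\le 19$ are \emph{outputs} of the RDP-index argument, not available inputs.

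Your proposed treatment of the mixed $e=1$ cases would also fail on the three hardest survivors, $(p,n)=(2,34),(3,48),(5,40)$. In each of these the tame part has order $r=17,16,8$ respectively, and automorphisms of those orders \emph{do} exist on K3 surfaces in characteristics $2,3,5$ (Remark \ref{rem:Scyc}: $\Scyc(p)^{p'}=\Scyc(0)^{p'}$), so ``the order-$r$ automorphism of $Y=X/\mu_p$ must respect the Nikulin--Dolgachev--Keum constraints'' rules out nothing. The paper needs a bespoke argument for each: for $(2,34)$, a $2$-adic lattice computation on the $17$ exceptional $(-2)$-curves permuted by $\mu_{17}$, using the factorization of $x^{17}-1$ over $\bF_2$ to produce a class of odd self-intersection in $\Pic(\tilde X)$; for $(3,48)$, the finite-height/supersingular dichotomy of Lemma \ref{lem:H2 of K3} combined with $\rank T\ge\phi(16)=8$ against the Picard rank forced by $8A_2$; and for $(5,40)$, a page-long analysis of the quotient by the non-symplectic involution, identifying it with $\bP^1\times\bP^1$ and deriving a contradiction from the fixed curve of class $\cO(4,4)$. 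None of these ideas, nor any workable substitute, appears in your outline, so the proposal as written does not establish the excluded sets.
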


We need some preparations.	
The \emph{height} $h$ of a K3 surface $X$ in characteristic $p > 0$, whose definition we do not recall here, is either $\infty$ or an integer in $\set{1, \dots, 10}$,
and $X$ is called \emph{supersingular} or \emph{of finite height} respectively.
If $h < \infty$ then the inequality $\rho \leq 22 - 2h$ holds
(Lemma \ref{lem:H2 of K3}(\ref{lem:H2 of K3:bound of height})), 
where $\rho = \rank \Pic(X)$ is the Picard number.
This implies that if $\rho \geq 21$ then $X$ is supersingular.

In fact the Tate conjecture for K3 surfaces, now a theorem, states conversely that if $X$ is supersingular then $\rho = 22$: see Lemma \ref{lem:H2 of K3}(\ref{lem:H2 of K3:Tate conjecture}) below for references.
In this case, the $\bZ_p$-lattice $\Hcrys^2(X/W(k))^{F = p}$ is isomorphic to $\Pic(X) \otimes \bZ_p$ 
\cite{Ogus:K3crystals}*{Corollary 1.6}, 
and the discriminant group of $\Hcrys^2(X/W(k))^{F = p}$ 
(isomorphic to the discriminant group of $\Pic(X)$) is of the form
$(\bZ/p\bZ)^{2\sigma_0}$ for an integer $\sigma_0 \in \set{1, \dots, 10}$.
This $\sigma_0$ is called the \emph{Artin invariant} of $X$.
Here the \emph{discriminant group} of a non-degenerate lattice (resp.\ non-degenerate $\bZ_p$-lattice) $L$ is defined to be the finite group $L^*/L$,
where $L^* = \Hom_{\bZ}(L, \bZ)$ (resp.\ $L^* = \Hom_{\bZ_p}(L, \bZ_p)$) is the dual of $L$.

We define the \emph{crystalline transcendental lattice} $T(X) = \Tcrys(X) \subset \Hcrys^2(X/W(k))$ to be the orthogonal complement of the image of $\Pic(X) \otimes W(k)$,
where $W(k)$ is the ring of Witt vectors over $k$.
We have $\rho + \rank T(X) = 22$.

We collect some facts:

\begin{lem} \label{lem:H2 of K3}
Let $X$ be a K3 surface in characteristic $p > 0$.
\begin{enumerate}
\item \label{lem:H2 of K3:Q}
 $\Aut(X)$ acts on $\Hcrys^2(X/W(k))$ and $\Het^2(X, \bZ_l)$ (for any prime $l \neq p$) faithfully,
 and the characteristic polynomial of any element is independent of the cohomology (and $l$)
 and has coefficients in $\bQ$.
\item \label{lem:H2 of K3:bound of height}
 If $X$ is of finite height $h$, then $\rho \leq 22 - 2 h$.
%\item \label{lem:H2 of K3:cycle map}
% If $X$ is of finite height, then the Chern class morphism $\Pic(X) \otimes W(k) \to \Hcrys^2(X/W(k))$ 
% is a primitive embedding (i.e.\ it is injective and its cokernel is torsion-free).
\item \label{lem:H2 of K3:transcendental}
 Let $g \in \Aut(X)$ and suppose it acts on $H^0(X, \Omega^2)$ by a primitive $N$-th root of $1$.
 If $X$ is of finite height and $p \geq 3$,
 then the characteristic polynomial of $g^*$ on $\Tcrys(X)$
 is the product of cyclotomic polynomials $\Phi_{N p^{e_i}}$ with non-negative integers $e_i$.
 In particular $\phi(N) \divides \rank \Tcrys(X)$, 
 in particular $\rank \Tcrys(X) \geq \phi(N)$.
\item \label{lem:H2 of K3:Tate conjecture}
 If $X$ is supersingular, then $\rho = 22$.
\item \label{lem:H2 of K3:Artin invariant}
 Let $g \in \Aut(X)$ and define $N$ as in (\ref{lem:H2 of K3:transcendental}).
 If $X$ is supersingular of Artin invariant $\sigma_0$,
 then $N \divides (p^{\sigma_0} + 1)$.
\end{enumerate}
\end{lem}

An immediate consequence of (\ref{lem:H2 of K3:Artin invariant}) is that, 
letting $g \in \Aut(X)$ and $N$ be as in (\ref{lem:H2 of K3:transcendental}), 
if there exists no integer $\sigma_0$ with $N \divides (p^{\sigma_0} + 1)$,
then $X$ is not supersingular.
This applies to, e.g., $(p,N) = (3,8), (5,4)$.

\begin{proof}
(\ref{lem:H2 of K3:Q}) \cite{Keum:orders}*{Theorem 1.4}. 

(\ref{lem:H2 of K3:bound of height})
\cite{Illusie:deRham--Witt}*{Proposition II.5.12}.

%(\ref{lem:H2 of K3:cycle map})
%Since the crystalline Chern classes and the de Rham Chern classes are compatible \cite{Berthelot--Ogus:dRcrys}*{Proposition 3.4},
%it suffices to observe that if $X$ is of finite height then 
%$\Pic(X) \otimes_{\bZ} k \to \HdR^2(X)$ is injective, and this is proved by 
%van der Geer--Katsura \cite{vanderGeer--Katsura:stratification}*{Proposition 10.3}.

(\ref{lem:H2 of K3:transcendental})
See \cite{Matsumoto:non-symplectic}*{Lemma 2.4(3)},
which deduces the assertion from \cite{Jang:lifting}*{Theorem 3.2}.

(\ref{lem:H2 of K3:Tate conjecture})
This assertion, the Tate conjecture for supersingular K3 surfaces,
is proved by 
Madapusi Pera \cite{MadapusiPera:TateK3}*{Theorem 1} for characteristic $\geq 3$
and by Kim--Madapusi Pera \cite{Kim--MadapusiPera:2-adic}*{Theorem A.1} for characteristic $2$.

The assertion under the assumption that $X$ admits an elliptic fibration,
which is true for example if $\rho \geq 5$ (which is always the case when we use this assertion in this paper),
was proved much earlier by Artin \cite{Artin:supersingularK3}*{Theorem 1.7}.

(\ref{lem:H2 of K3:Artin invariant})
This is proved by Nygaard \cite{Nygaard:higherdeRham-Witt}*{Theorem 2.1} under the assumption $p \neq 2$.
The argument is in fact valid for $p = 2$, see \cite{Matsumoto:non-symplectic}*{Remark 2.2}.
\end{proof}

\begin{proof}[Proof of Theorem \ref{thm:mu_n}]
Let $\Smu'(p)$ be the set on the right hand side of the statement.
If $n$ is a positive integer not divisible by $p$
then $\mu_n$-action is equivalent to the action a cyclic group of order $n$ and, as noted in Remark \ref{rem:Scyc}, 
Keum \cite{Keum:orders}*{Main Theorem} proved that $n$ is the order of some automorphism of a K3 surface in characteristic $p$
if and only if $n \in \Smu'(p)$ (equivalently $n \in \Smu(0)$),

If $n \in \Smu'(p)$ and $p \divides n$,
then the examples given in Example \ref{ex:mu_n} show that $n \in \Smu(p)$.

Now take $n \in \Smu(p)$ with $p \divides n$, and we will show $n \in \Smu'(p)$.
Write $n = p^e r$ with $p \notdivides r$.
Since a smooth K3 surface never admits a $\mu_p$-action,
an example $X$ must have an RDP $w$.
Since $\mu_{p^e}$-fixed RDPs can be blown up, we may assume $w$ is not $\mu_{p^e}$-fixed.
Note that such RDPs are classified in Proposition \ref{prop:non-fixed mu_n RDP}.
We show in each case that $n$ belongs to $\Smu'(p)$.

Suppose $w$ is $D_m$ or $E_m$.
Let $\mu_{p^f s} = \Stab(w) \subset \mu_{p^e r}$ (with $p \notdivides s$).
Then the pair $(w, p^e s)$ appears in Table \ref{table:non-fixed mu_n RDP} 
and we have $(r/s)m < 22$.
Then we observe that $n \in \Smu'(p)$ except in the following cases:
$(p,n,s,(r/s)w) = 
(2,54,9,3E_7^0), 
(2,40,5,D_{21}^0), 
(2,34,17,D_{18}^0),
(2,34,17,D_{19}^0)$.
These exceptional cases do not occur,
since it follows that $\mu_s$ acts trivially on the classes of every ($21, 21, 18, 19$) exceptional curves,
which gives a too large invariant subspace of $\Het^2(\tilde{X}, \bQ_l)$ for an order $s$ automorphism 
(which must act on $\Het^2$ faithfully with a characteristic polynomial with coefficients in $\bQ$ by Lemma \ref{lem:H2 of K3}(\ref{lem:H2 of K3:Q})).

Suppose $w$ is $A_{m'-1}$.
Let $\mu_{p^f s} = \Stab(w) \subset \mu_{p^e r}$ (with $p \notdivides s$).
We have $0 \leq f < e$.
It follows from Proposition \ref{prop:non-fixed mu_n RDP} that $p^{e-f} \divides m'$, so write $m' = p^{e-f} m$ with $m \geq 1$, and that
$\mu_{p^f t}$ acts on $w$ symplectically
where either $s = t$ or $(s,t) = (2,1)$.
Then $X/\mu_{p^f t}$ has $(r/s) A_{p^f t m' - 1}$.
We have $22 > (r/s) (p^f t m' - 1) = (r/s)(p^e t m - 1) \geq r (t/s) m (p^e - 1)$.
If $s = t$ then this implies $(p^e - 1)r \leq (p^e - 1)rm < 22$,
and if $(s,t) = (2,1)$ then this implies ($p \neq 2$ and) $2 \divides r$ and $(p^e - 1)r \leq (p^e - 1)rm < 44$.
We observe that this condition implies either $n \in \Smu'(p)$ or $(p,n) = (5,40), (3,48), (2,34)$.
It remains to show that each of the latter $3$ cases is impossible.

\medskip

If $(p,n) = (3,48)$, then $(m,s,t) = (1,2,1)$. 
Since $\mu_s$ does not act symplectically,
the generator of $\mu_{16}$ acts on $H^0(\tilde{X}, \Omega^2)$ by a primitive $16$-th root of unity.
If $\tilde{X}$ is of finite height, 
then by Lemma \ref{lem:H2 of K3}(\ref{lem:H2 of K3:transcendental}) we have $\rank T(\tilde{X}) \geq \phi(16) = 8$ and
we have $\rho(\tilde{X}) > 8 \cdot 2 = 16$ (from $8A_2$),
contradicting $\rho + \rank T(\tilde{X}) = 22$.
By the remark after Lemma \ref{lem:H2 of K3}, $\tilde{X}$ cannot be supersingular.

\medskip

If $(p,n) = (2,34)$, then $(m,s,t) = (1,1,1)$.
Let $e_i$ ($i \in \bZ/17\bZ$) be the exceptional curves above the $\mu_{17}$-orbit of $w$,
numbered in a way that a generator $g \in \mu_{17}$ acts by $g(e_i) = e_{i+1}$.
Let $L \subset \Pic(\tilde{X})$ be the sublattice generated by $e_i$'s, 
and $L' = \Pic(\tilde{X}) \cap \bQ L$ its primitive closure.

First suppose $\tilde{X}$ is of finite height.
Then by Corollary \ref{cor:quotient:height} the $\mu_2$-action is symplectic.
We may assume that $X$ is maximal, in which case the number of RDPs on $X$ (which are all of type $A_1$) is $8$ by Proposition \ref{prop:number of fixed points}, 
which is not compatible with the $\mu_{17}$-action.

Next suppose $\tilde{X}$ is supersingular.
By Lemma \ref{lem:H2 of K3}(\ref{lem:H2 of K3:Artin invariant}), the only possible Artin invariant is $\sigma_0 = 4$.
By Lemma \ref{lem:rank of lattice} below (applied to $L_1 = L'$, $L_2 = L'^{\perp}$, $M = \Pic(\tilde{X})$, $\bar{M} = \Pic(\tilde{X})^*$, $\bar{L}_i = L_i^*$),
we have 
\begin{align*}
\rank (\disc(L')) 
&\leq \dim_{\bF_2}(\disc(\Pic(\tilde{X}))) + \rank(\disc(L'^{\perp})) \\
&\leq \dim_{\bF_2}(\disc(\Pic(\tilde{X}))) + \rank(L'^{\perp}) \\
&= 2 \sigma_0 + (22 - \rank L') = 13.
\end{align*}
Since $\disc(L) \cong (\bZ/2\bZ)^{\oplus 17}$ has rank $17$, we obtain $L \subsetneq L'$.
Let $V \subset 2^{\bZ/17\bZ}$ be the set of subsets $S \subset \bZ/17\bZ$ such that $(1/2) \sum_{i \in S} e_i \in L'$.
Then $V$ is naturally a $g$-stable $\bF_2$-vector space, and is nonzero, 
and we can identify it with a nonzero $\bF_2[x]$-submodule $V$ of $\bF_2[x] / (x^{17} - 1)$.
Clearly $V = Q(x) \cdot \bF_2[x] / (x^{17} - 1)$ for some $Q(x) \in \bF_2[x]$ dividing $x^{17} - 1$.
Using the factorization 
$
 x^{17} - 1 = (x - 1) F_{17,1}(x) F_{17,2}(x)
$
in $\bF_2[x]$, where 
\begin{align*}
F_{17,1}(x) &= x^8 + x^7 + x^6 + x^4 + x^2 + x + 1 \quad \text{and} \\
F_{17,2}(x) &= x^8 + x^5 + x^4 + x^3 + 1
\end{align*}
are irreducible, 
it follows that $V$ contains at least one of
\begin{align*}
 (x^{17} - 1) / (x - 1)     &= x^{16} + \dots + 1, \\
 (x^{17} - 1) / F_{17,1}(x) &= x^9 + x^8 + x^6 + x^3 + x   + 1, \quad \text{or} \\
 (x^{17} - 1) / F_{17,2}(x) &= x^9 + x^6 + x^5 + x^4 + x^3 + 1.
\end{align*}
Hence there exists a set $S \in V$ with $\# S = 17$ or $\# S = 6$.
But then $((1/2) \sum_{i \in S} e_i)^2 = (1/2)^2 \cdot \# S \cdot (-2) \not\in 2\bZ$, contradiction.

\medskip

If $(p,n) = (5,40)$,
then $(m,s,t) = (1,2,1)$
and there are $4$ non-$\mu_5$-fixed $A_4$
on which $g$ acts transitively,
where $g$ is a fixed generator of $\mu_8 \subset \mu_{40}$,
and $g^4$ is non-symplectic. 
Moreover $\Fix(g^4)$ is $1$-dimensional, passing through the $4$ points of type $A_4$.

By the remark after Lemma \ref{lem:H2 of K3}, $\tilde{X}$ cannot be supersingular.
This implies that the $\mu_5$-action is symplectic (Corollary \ref{cor:quotient:height})
and hence the quotient $Y = X/\mu_5$ is an RDP K3 surface.

We will show in a subsequent paper \cite{Matsumoto:k3alphap}*{Proposition 2.15(4)} that the $\mu_p$-action induces 
a canonical nonzero element $v \in H^0(Y^{\sm}, \Omega^2) \otimes \Der(Y)$,
which we can write $v = \omega_Y \otimes D_Y$ (uniquely up to $k^*$), such that
$D_Y$ is $p$-closed and satisfies $Y^{D_Y} = \pthpower{X}$.
It is characterized by $D(f)^p  D_Y(h) \omega_Y = d(f^p) \wedge dh$ for local sections $f$ of $\cO_X$ and $h$ of $\cO_Y$.
Since the $\mu_5$-action in our case is $g$-invariant, it follows that $v = \omega_Y \otimes D_Y$ is $g$-invariant.

Fix a decomposition $v = \omega_Y \otimes D_Y$.
We have $D_Y^p = \phi D_Y$ for some meromorphic function $\phi$.
Since both $Y$ and the quotient $Y^{D_Y} \cong \pthpower{X}$ are RDP K3 surfaces, 
it follows from the Rudakov--Shafarevich formula \cite{Rudakov--Shafarevich:inseparable}*{Corollary 1 to Proposition 3} 
that $D_Y$ has only isolated fixed points,
and this implies that $\phi$ is holomorphic, hence constant.
We have $\phi \neq 0$, since if $\phi = 0$ then,
as we will prove in a subsequent paper \cite{Matsumoto:k3alphap}*{Lemma 3.6 or Theorem 4.6},
the $\alpha_p$-action corresponding to $D_Y$ must have quotient singularities different from $A_{p-1}$,
but $X$ has only RDPs of type $A_{p-1}$.
Since $\Fix(D_Y)$ is isolated and $Y^{D_Y} = Y^{g^*(D_Y)}$, 
we have $g^*(D_Y) = \lambda D_Y$ with $\lambda \in H^0(Y, \cO)^* = k^*$,
and since $D_Y^p = \phi D_Y$ with $\phi \in k^*$ we have $\lambda^{p-1} = 1$,
hence $(g^4)^{*}(D_Y) = D_Y$.

On the other hand, 
since $\Fix(g^4 \actson Y)$ is homeomorphic to $\Fix(g^4 \actson X)$ and hence is $1$-dimensional,
we have $(g^4)^{*}(\omega_Y) = - \omega_Y$.
This contradicts the $g$-invariance of $v = \omega_Y \otimes D_Y$.
\end{proof}

\begin{lem} \label{lem:rank of lattice}
Let $L_1 \subset \bar{L}_1$, $L_2 \subset \bar{L}_2$, and 
$L_1 \oplus L_2 \subset M \subset \bar{M} \subset \bar{L}_1 \oplus \bar{L}_2$ be sequences of abelian groups, where the bars have no specific meaning.
Assume that the projection $\bar{M} \to \bar{L}_1$ are surjective and that $M \cap (\bar{L}_1 \oplus 0) = L_1 \oplus 0$.
Then we have $\rank(\bar{L}_1 / L_1) \leq \rank(\bar{M}/M) + \rank(\bar{L}_2 / L_2)$,
where the rank of an abelian group is the minimum number of generators.
\end{lem}
\begin{proof}
We may assume $L_i = 0$.
The assumption then implies that $M \to \bar{L}_2$ is injective.
Since the rank behaves subadditively with respect to subgroups, quotients, and extensions, we obtain
$\rank(\bar{L}_1) \leq \rank(\bar{M}) \leq \rank(M) + \rank(\bar{M}/M) \leq \rank(\bar{L}_2) + \rank(\bar{M}/M)$.
\end{proof}

\section{Examples} \label{sec:examples}

For a projective variety with projective coordinates $(x_i)$, 
we use the notation $\wt(x_i) = (n_i)$ to mean that $\wt(x_j/x_i) = (n_j - n_i)$ on the affine piece $(x_i \neq 0)$ for each $i$.
Note that $\wt(x_i) = (n_i)$ is equivalent to $\wt(x_i) = (a + n_i)$.
We use a similar notation for subvarieties of $\bP(3,1,1,1)$.

\subsection{Symplectic actions} \label{subsec:examples:symplectic}

\begin{example}[Symplectic $\mu_4 \times \mu_4$-action] \label{ex:symplectic mu_4 mu_4} 
The quartic surface $X = (w^4 + x^4 + y^4 + z^4 + wxyz = 0)$ in characteristic $p = 2$ is an RDP K3 surface.
It has $6$ RDPs, all of type $A_3$,
at the points where two of $w,x,y,z$ are $0$ and the others are $1$.
This surface admits a symplectic action of $G = H_1 \times H_2$,
where $H_1 = \mu_4$ and $H_2 = \mu_4$ act by 
$\wt(w,x,y,z) = (0,0,1,-1)$
and $\wt(w,x,y,z) = (0,1,0,-1)$ respectively.

With respect to the action of the subgroup scheme $\mu_2 \subset H_1$, 
the $2$ RDPs at $(0,0,1,1)$ and $(1,1,0,0)$ are fixed
and the other $4$ RDPs are non-fixed and non-maximal.
The quotient surface by this $\mu_2$-action is $(W^2 + X^2 + Y^2 + Z^2 + AB = WX - A^2 = YZ - B^2 = 0)$ in $\bP^5$,
where $W = w^2, \dots, Z = z^2$ and $A = wx, B = yz$,
with 
$2$ RDPs of type $A_7$ at $(Y = Z = B = W+X = W+A = 0),(W = X = A = Y+Z = Y+B = 0)$
and $4$ of type $A_1$ at $(WX = A = YZ = B = W + X + Y + Z = 0)$.

The quotient morphism by the subgroup scheme $\mu_2 \times \mu_2$ (resp.\ the full group $G$) 
is the relative Frobenius morphism $X \to X^{(2)}$ (resp.\ $X \to X^{(4)}$).
\end{example}

\begin{example}[Symplectic $\mu_3 \times \mu_3$-action] \label{ex:symplectic mu_3 mu_3} 
The surface $X = (v^3 + w^3 + x^3 + y^3 + z^3 + vwx = v^2 - yz = 0) \subset \bP^4$
in characteristic $p = 3$
is an RDP K3 surface, and
has $2A_5$ at $(1,0,0,1,1),(0,1,-1,0,0)$ 
and $4A_2$ at $(0,1,0,-1,0),(0,1,0,0,-1),(0,0,1,-1,0),(0,0,1,0,-1)$.
This surface admits a symplectic action of $G = H_1 \times H_2$,
where $H_1 = \mu_3$ and $H_2 = \mu_3$ act by 
$\wt(v,w,x,y,z) = (0,1,-1,0,0)$ and 
$\wt(v,w,x,y,z) = (0,0,0,1,-1)$ respectively.
Let $D_1,D_2$ be the corresponding derivations.
The fixed points of $D_1$ (resp.\ $D_2$) is the first (resp. second) $A_5$ point.
The fixed points of $D_1 + D_2$ (resp.\ $D_1 - D_2$), which corresponds to the diagonal (resp.\ anti-diagonal) subgroup of $G$,
are the first and the fourth (resp.\ the second and the third) $A_2$ points.
The quotient morphism by $G$ is the relative Frobenius morphism $X \to X^{(3)}$.
\end{example}

\begin{example}[Symplectic $\mu_n$-action ($n = 5,6,7,8$)] 
For each $n = 5,6,7,8$,
let $F$ be a linear combination of the monomials listed in Table \ref{table:symplectic mu_n K3 equation}, in characteristic $p$,
and then $X = (F = 0) \subset \bP^3$ admits a $\mu_n$-action with the indicated weights.
If $F$ is a generic such polynomial, then $X$ is an RDP K3 surface and the $\mu_n$-action is symplectic.
The fixed locus is $X \cap \set{(1,0,0,0), \allowbreak (0,1,0,0), \allowbreak (0,0,1,0), \allowbreak (0,0,0,1)}$.

\begin{table} 
\caption{Examples of symplectic $\mu_n$-actions on RDP K3 surfaces} \label{table:symplectic mu_n K3 equation}
\begin{tabular}{llll}
\toprule
$n$ & $p$ & monomials & $\wt(w,x,y,z)$ \\
\midrule
$5$ & $5$   & $w^3 x, x^3 z, z^3 y, y^3 w, w^2 z^2, w x y z, x^2 y^2$ & $1,2,3,4$ \\
$6$ & $2,3$ & $w^4, w y^3, w x y z, x^3 z, z^4, w^2 z^2, x^2 y^2$     & $0,1,2,3$ \\
$7$ & $7$   & $w^4, x^3 z, z^3 y, y^3 x, w x y z$ & $0,1,2,4$ \\
$8$ & $2$   & $w^4, x^4, y^3 z, y z^3, w x y z$   & $0,2,1,5$ \\
\bottomrule
\end{tabular}
\end{table}

For example, for $n = 5,6,7,8$ respectively, the polynomials with coefficients $(1,1,1,1,0,0,0)$, $(1,1,1,1,1,0,0)$, $(1,1,1,1,1)$, $(1,1,1,1,1)$ 
satisfy the condition. 
\end{example}

\subsection{Non-symplectic actions}

\begin{example}[Non-symplectic $\mu_2$-action with Enriques quotient in characteristic $2$] \label{ex:2:enr:multi}
Following \cite{Bombieri--Mumford:III}*{Section 3}, let 
$L_1, L_2, L_3$ be three linear polynomials in $12$ variables
and let 
$X \subset \bP^5$ be the intersection of three quadrics $F_1, F_2, F_3$
defined by $F_h = L_h(x_k^2, x_i x_j, y_k^2, y_i x_j + x_i y_j + y_i y_j)_{1 \leq k \leq 3, 1 \leq i < j \leq 3} \in k[x_1,x_2,x_3,y_1,y_2,y_3]$.
Then for generic $L_h$, $X$ is an RDP K3 surface (with $12$ RDPs of type $A_1$), 
$\mu_2$ acts on ($\bP^5$ and) $X$ by 
$\wt(x_i, y_i + x_i) = (0,1)$ without any fixed point on $X$,
and the quotient $X/\mu_2$ is an Enriques surface.

\end{example}

\begin{example}[Non-symplectic $\mu_2$-action with rational quotient in characteristic $2$]
The quartic surface $w^2 (xy + z^2) + x^4 + y^4 + z^4 + yz(y^2 + z^2) = 0$
is an RDP K3 surface,
and the $\mu_2$-action with $\wt(w,x,y,z) = (0,1,1,1)$ is non-symplectic.
The fixed locus consists of the curve $(w = 0)$ and the RDP $(x = y = z = 0)$ of type $A_1$.
The image of this RDP in the quotient surface is a non-RDP singularity.
\end{example}

In the following example,
for two polynomials $A(t),B(t)$ with $\deg A \leq 8$ and $\deg B \leq 12$,
``the elliptic (or quasi-elliptic) surface defined by the equation $y^2 = x^3 + A(t) x + B(t)$''
is an abbreviation for the projective surface that is the union of four affine surfaces 
\begin{align*}
&\Spec k[x,y,t]        / (- y^2 + x^3 + A(t) x + B(t)), \\
&\Spec k[x',y',t^{-1}] / (- y'^2 + x'^3 + t^{-8} A(t) x' + t^{-12} B(t)), \\
&\Spec k[z,w,t]        / (- z + w^3 + A(t) wz^2 + B(t) z^3), \\
&\Spec k[z',w',t^{-1}] / (- z' + w'^3 + t^{-8} A(t) w'z'^2 + t^{-12} B(t) z'^3),
\end{align*}
glued by the relations 
$x' = t^{-4} x$, $y' = t^{-6} y$, 
$z = y^{-1}$, $w = x y^{-1}$, 
$z' = y'^{-1} = t^6 y^{-1}$, $w' = x' y'^{-1} = t^2 x y^{-1}$.
For generic $A$ and $B$ this is an RDP K3 surface.

\begin{example}[Non-symplectic $\mu_{n}$-actions] \label{ex:mu_n}

Table \ref{table:mu_n K3} proves the existence part of Theorem \ref{thm:mu_n} for $n$ divisible by $p$.
The first group consists of elliptic (or quasi-elliptic) RDP K3 surfaces,
the second of double sextics, and the third of quartics.
Only the non-$\mu_n$-fixed RDPs are listed,
except in the example for $(p,n) = (2,32)$ the $D_{20}^0$ point is fixed
and after blowing-up this point we find a non-fixed $D_{18}^0$ point.

The examples are characteristic $p$ reductions of 
the examples (of an automorphism of order $n$) in characteristic $0$ obtained 
respectively by
Brandhorst \cite{Brandhorst:non-symplectic}*{Theorem 5.9},
Keum \cite{Keum:orders}*{Example 3.2},
Kondo \cite{Kondo:trivially}*{Sections 3 and 7}, and
Oguiso \cite{Oguiso:globalindices}*{Proposition 2},
except that for the ones marked ``?'' we could not find a reference.
An asterisk means that we made a coordinate change $t \mapsto t^{-1}$.

\begin{table} 
\caption{Examples of non-symplectic $\mu_n$-actions on RDP K3 surfaces} \label{table:mu_n K3}
\begin{tabular}{llllrl}
\toprule
$p$ & $n$ & equation & $\wt(x,y,t)$ & RDPs & references \\
\midrule
$19$ & $38$ & $y^{2} = x^{3} + t^{7} x + t$           & ${2},{3},{6}$   & $  A_{18}$   & \cite{Kondo:trivially} \\
\midrule
$17$ & $34$ & $y^{2} = x^{3} + t^{7} x + t^{2}$       & ${4},{23},{6}$  & $  A_{16}$   & \cite{Kondo:trivially} \\
\midrule
$13$ & $26$ & $y^{2} = x^{3} + t^{5} x + t$           & ${2},{3},{6}$   & $  A_{12}$   & \cite{Kondo:trivially} \\
\midrule
$11$ & $44$ & $y^{2} = x^{3} + x + t^{11}$            & ${22},{11},{2}$ & $ 2A_{10}$   & \cite{Kondo:trivially} \\
\midrule
$7$  & $42$ & $y^{2} = x^{3} + t^{7} x + 1$           & ${14},{21},{4}$ & $ 3A_{6}$    & ? \\ 
$7$  & $28$ & $y^{2} = x^{3} + x + t^{7}$             & ${14},{7},{2}$  & $ 2A_{6}$    & \cite{Kondo:trivially} \\
\midrule
$5$  & $60$ & $y^{2} = x^{3} + t (t^{10} - 1)$        & ${2},{3},{6}$   & $ 2E_{8}^0$  & \cite{Keum:orders} \\ 
\midrule
$3$  & $60$ & $y^{2} = x^{3} + t (t^{10} - 1)$        & ${2},{3},{6}$   & $10A_{2}$    & \cite{Keum:orders} \\ 
$3$  & $42$ & $y^{2} = x^{3} + t (t^{7} - 1)$         & ${2},{3},{6}$   & $ 7A_{2}$    & \cite{Brandhorst:non-symplectic}{*} \\
$3$  & $36$ & $y^{2} = x^{3} - t (t^{6} - 1)$         & ${2},{3},{6}$   & $ 2E_{6}^0$  & \cite{Kondo:trivially}{*} \\
$3$  & $24$ & $y^{2} = x^{3} + t^{2} (t^{8} - 1)$     & ${2},{3},{3}$   & $ 8A_{2}$    & ? \\
\midrule
$2$  & $60$ & $y^{2} = x^{3} + t (t^{10} - 1)$        & ${2},{3},{6}$   & $ 5D_{4}^0$  & \cite{Keum:orders} \\ 
$2$  & $38$ & $y^{2} = x^{3} + t^{7} x + t$           & ${2},{3},{6}$   & $19A_{1}$    & \cite{Kondo:trivially} \\
$2$  & $36$ & $y^{2} = x^{3} - t (t^{6} - 1)$         & ${2},{3},{6}$   & $ 3D_{4}^0$  & \cite{Kondo:trivially}{*} \\
$2$  & $32$ & $y^{2} = x^{3} + t^{2} x + t^{11}$      & ${18},{11},{2}$ & $  D_{20}^0$ & \cite{Oguiso:globalindices} \\
$2$  & $26$ & $y^{2} = x^{3} + t^{5} x + t$           & ${2},{3},{6}$   & $13A_{1}$    & \cite{Kondo:trivially} \\
$2$  & $24$ & $y^{2} = x^{3} + t^{5} (t^{4} + 1)$     & ${2},{3},{6}$   & $  E_{8}^0$  & \cite{Brandhorst:non-symplectic} \\
\midrule
$p$ & $n$ & equation & $\wt(w,x,y,z)$ & RDPs & references \\
\midrule
$7$ & $42$ & $w^2 = x^5 y + y^5 z + z^5 x$     & $-1,0,-2,8$  & $3A_6$ & ? \\ 
\midrule
$3$ & $42$ & $w^2 = x^5 y + y^5 z + z^5 x$     & $-1,0,-2,8$  & $7A_2$ & ? \\ 
\midrule
$2$ & $42$ & $w^2 = x^5 y + y^5 z + z^5 x$     & $-1,0,-2,8$  & $21A_1$ & ? \\ 
$2$ & $22$ & $w^2 = x^5 y + y^5 z + x y^2 z^3$ & $-1,0,-2,8$  & $11A_1$ & ? \\ 
\midrule
$p$ & $n$ & equation & $\wt(w,x,y,z)$ & RDPs & references \\
\midrule
$2$ & $28$ & $w^4 + x^3 y + y^3 z + z^3 x = 0$ & $0,1,-3,9$   & $7A_3$ & ? \\ 
\bottomrule
\end{tabular}
\end{table}
\end{example}

\subsection{\texorpdfstring{$\mu_p$-actions}{mu\_p-actions} on abelian surfaces}

As shown in Theorem \ref{thm:quotient:abelian},
the nontrivial $\mu_p$-actions of abelian surfaces $A$, up to automorphisms of $\mu_p$, 
are precisely the translations by subgroup schemes of $A[p]$ isomorphic to $\mu_p$.

\begin{rem}
In the case of finite order automorphisms on abelian surfaces
there are examples with non-abelian quotients.
Kummer surfaces in characteristic $\neq 2$ are the minimal resolution of 
the RDP K3 quotient (with $16A_1$) by the symplectic involution $x \mapsto -x$ on abelian surfaces
(for characteristic $2$ see Remark \ref{rem:wild}). 
Also certain non-symplectic (or sometimes symplectic) actions give (quasi-)hyperelliptic quotients.
It seems that there are no $\mu_p$-analogue of these actions.
\end{rem}

\begin{rem}
If we consider \emph{rational} vector fields (i.e.\ possibly with poles) of multiplicative type
there are other kinds of examples.
See \cite{Katsura--Takeda:quotients}*{Example 6.2}
for a rational vector field of multiplicative type on an abelian surface (in characteristic $2$)
with a general type quotient.
\end{rem}

\subsection*{Acknowledgments}
I thank Hiroyuki Ito, Kazuhiro Ito, Shigeyuki Kondo, Christian Liedtke, Gebhard Martin, Hisanori Ohashi, Shun Ohkubo, and Tomoaki Shirato 
for helpful comments and discussions.
I also thank the anonymous referee for a number of corrections and suggestions.

\begin{bibdiv}
\begin{biblist}
\bib{Artin:supersingularK3}{article}{
  author={Artin, M.},
  title={Supersingular $K3$ surfaces},
  journal={Ann. Sci. \'Ecole Norm. Sup. (4)},
  volume={7},
  date={1974},
  pages={543--567 (1975)},
  issn={0012-9593},
}

\bib{Artin:RDP}{article}{
  author={Artin, M.},
  title={Coverings of the rational double points in characteristic $p$},
  conference={ title={Complex analysis and algebraic geometry}, },
  book={ publisher={Iwanami Shoten, Tokyo}, },
  date={1977},
  pages={11--22},
}

\bib{Bombieri--Mumford:III}{article}{
  author={Bombieri, E.},
  author={Mumford, D.},
  title={Enriques' classification of surfaces in char. $p$. III},
  journal={Invent. Math.},
  volume={35},
  issn={0020-9910},
  date={1976},
  pages={197--232},
}

\bib{Bombieri--Mumford:II}{article}{
  author={Bombieri, E.},
  author={Mumford, D.},
  title={Enriques' classification of surfaces in char. $p$. II},
  conference={ title={Complex analysis and algebraic geometry}, },
  book={ publisher={Iwanami Shoten, Tokyo}, },
  date={1977},
  pages={23--42},
}

\bib{Brandhorst:non-symplectic}{article}{
  author={Brandhorst, Simon},
  title={The classification of purely non-symplectic automorphisms of high order on K3 surfaces},
  journal={J. Algebra},
  volume={533},
  date={2019},
  pages={229--265},
  issn={0021-8693},
}

\bib{Dolgachev--Keum:wild-p-cyclic}{article}{
  author={Dolgachev, Igor V.},
  author={Keum, JongHae},
  title={Wild $p$-cyclic actions on $K3$-surfaces},
  journal={J. Algebraic Geom.},
  volume={10},
  date={2001},
  number={1},
  pages={101--131},
  issn={1056-3911},
}

\bib{Dolgachev--Keum:auto}{article}{
  author={Dolgachev, Igor V.},
  author={Keum, JongHae},
  title={Finite groups of symplectic automorphisms of $K3$ surfaces in positive characteristic},
  journal={Ann. of Math. (2)},
  volume={169},
  date={2009},
  number={1},
  pages={269--313},
  issn={0003-486X},
}

\bib{Ekedahl--Hyland--Shepherd-Barron}{article}{
  author={Ekedahl, T.},
  author={Hyland, J. M. E.},
  author={Shepherd-Barron, N. I.},
  title={Moduli and periods of simply connected Enriques surfaces},
  year={2012},
  eprint={https://arxiv.org/abs/1210.0342},
}

\bib{Illusie:deRham--Witt}{article}{
  author={Illusie, Luc},
  title={Complexe de de\thinspace Rham-Witt et cohomologie cristalline},
  language={French},
  journal={Ann. Sci. \'{E}cole Norm. Sup. (4)},
  volume={12},
  date={1979},
  number={4},
  pages={501--661},
  issn={0012-9593},
}

\bib{Jang:lifting}{article}{
  author={Jang, Junmyeong},
  title={A lifting of an automorphism of a K3 surface over odd characteristic},
  journal={Int. Math. Res. Notices},
  date={2016},
}

\bib{Katsura:Kummer2}{article}{
  author={Katsura, Toshiyuki},
  title={On Kummer surfaces in characteristic $2$},
  conference={ title={Proceedings of the International Symposium on Algebraic Geometry}, address={Kyoto Univ., Kyoto}, date={1977}, },
  book={ publisher={Kinokuniya Book Store, Tokyo}, },
  date={1978},
  pages={525--542},
}

\bib{Katsura--Takeda:quotients}{article}{
  author={Katsura, Toshiyuki},
  author={Takeda, Y.},
  title={Quotients of abelian and hyperelliptic surfaces by rational vector fields},
  journal={J. Algebra},
  volume={124},
  date={1989},
  number={2},
  pages={472--492},
  issn={0021-8693},
}

\bib{Keum:orders}{article}{
  author={Keum, JongHae},
  title={Orders of automorphisms of K3 surfaces},
  journal={Adv. Math.},
  volume={303},
  date={2016},
  pages={39--87},
  issn={0001-8708},
}

\bib{Kim--MadapusiPera:2-adic}{article}{
  author={Kim, Wansu},
  author={Madapusi Pera, Keerthi},
  title={2-adic integral canonical models},
  journal={Forum Math. Sigma},
  volume={4},
  date={2016},
  pages={e28, 34},
}

\bib{Kondo:trivially}{article}{
  author={Kond{\=o}, Shigeyuki},
  title={Automorphisms of algebraic $K3$ surfaces which act trivially on Picard groups},
  journal={J. Math. Soc. Japan},
  volume={44},
  date={1992},
  number={1},
  pages={75--98},
  issn={0025-5645},
}

\bib{MadapusiPera:TateK3}{article}{
  author={Madapusi Pera, Keerthi},
  title={The Tate conjecture for K3 surfaces in odd characteristic},
  journal={Invent. Math.},
  volume={201},
  date={2015},
  number={2},
  pages={625--668},
  issn={0020-9910},
}

\bib{Matsumoto:extendability}{article}{
  author={Matsumoto, Yuya},
  title={Extendability of automorphisms of K3 surfaces},
  year={2021},
  eprint={https://arxiv.org/abs/1611.02092v2},
  journal={Math. Res. Lett.},
  status={to appear},
}

\bib{Matsumoto:non-symplectic}{article}{
  author={Matsumoto, Yuya},
  title={Degeneration of K3 surfaces with non-symplectic automorphisms},
  year={2021},
  eprint={https://arxiv.org/abs/1612.07569v3},
  journal={Rend. Sem. Mat. Univ. Padova},
  status={to appear},
}

\bib{Matsumoto:k3alphap}{article}{
  author={Matsumoto, Yuya},
  title={$\mu _p$- and $\alpha _p$-actions on K3 surfaces in characteristic $p$},
  year={2021},
  eprint={https://arxiv.org/abs/1812.03466v5},
  journal={J. Algebraic Geom.},
  status={to appear},
}

\bib{Matsumoto:k3rdpht}{article}{
  author={Matsumoto, Yuya},
  title={Inseparable maps on $W_n$-valued Ext groups of non-taut rational double point singularities and the height of K3 surfaces},
  year={2021},
  eprint={https://arxiv.org/abs/1907.04686v3},
}

\bib{Mukai:automorphismsK3}{article}{
  author={Mukai, Shigeru},
  title={Finite groups of automorphisms of $K3$ surfaces and the Mathieu group},
  journal={Invent. Math.},
  volume={94},
  date={1988},
  number={1},
  pages={183--221},
  issn={0020-9910},
}

\bib{Nikulin:auto}{article}{
  author={Nikulin, V. V.},
  title={Finite automorphism groups of K\"ahler $K3$ surfaces},
  language={Russian},
  journal={Trudy Moskov. Mat. Obshch.},
  volume={38},
  date={1979},
  pages={75--137},
  issn={0134-8663},
  note={English translation: Trans. Moscow Math. Soc. {\bf 1980}, no. 2, 71--135.},
}

\bib{Nygaard:vectorK3}{article}{
  author={Nygaard, Niels O.},
  title={A $p$-adic proof of the nonexistence of vector fields on $K3$\^^Msurfaces},
  journal={Ann. of Math. (2)},
  volume={110},
  date={1979},
  number={3},
  pages={515--528},
  issn={0003-486X},
}

\bib{Nygaard:higherdeRham-Witt}{article}{
  author={Nygaard, Niels O.},
  title={Higher de Rham-Witt complexes of supersingular $K3$\^^Msurfaces},
  journal={Compositio Math.},
  volume={42},
  date={1980/81},
  number={2},
  pages={245--271},
  issn={0010-437X},
}

\bib{Oguiso:globalindices}{article}{
  author={Oguiso, Keiji},
  title={A remark on the global indices of ${\bf Q}$-Calabi-Yau $3$-folds},
  journal={Math. Proc. Cambridge Philos. Soc.},
  volume={114},
  date={1993},
  number={3},
  pages={427--429},
  issn={0305-0041},
}

\bib{Ogus:K3crystals}{article}{
  author={Ogus, Arthur},
  title={Supersingular $K3$ crystals},
  conference={ title={Journ\'ees de G\'eom\'etrie Alg\'ebrique de Rennes}, address={Rennes}, date={1978}, },
  book={ series={Ast\'erisque}, volume={64}, publisher={Soc. Math. France, Paris}, },
  date={1979},
  pages={3--86},
}

\bib{Rudakov--Shafarevich:inseparable}{article}{
  author={Rudakov, A. N.},
  author={Shafarevich, I. R.},
  title={Inseparable morphisms of algebraic surfaces},
  language={Russian},
  journal={Izv. Akad. Nauk SSSR Ser. Mat.},
  volume={40},
  date={1976},
  number={6},
  pages={1269--1307, 1439},
  issn={0373-2436},
  note={English translation: Math. USSR-Izv. {\bf 10} (1976), no. 6, 1205--1237.},
}

\bib{Seshadri:Cartier}{article}{
  author={Seshadri, Conjeerveram Srirangachari},
  title={L'op\'eration de Cartier. Applications},
  conference={ title={S\'eminaire C. Chevalley, 3i\`eme ann\'ee: 1958/59. Vari\'et\'es de Picard}, },
  book={ publisher={\'Ecole Normale Sup\'erieure}, place={Paris}, },
  language={French},
  date={1960},
  pages={1--26},
}

\bib{SGA3-1}{collection}{
  title={Sch\'{e}mas en groupes (SGA 3). Tome I. Propri\'{e}t\'{e}s g\'{e}n\'{e}rales des sch\'{e}mas en groupes},
  language={French},
  series={Documents Math\'{e}matiques (Paris) [Mathematical Documents (Paris)]},
  volume={7},
  editor={Gille, Philippe},
  editor={Polo, Patrick},
  note={S\'{e}minaire de G\'{e}om\'{e}trie Alg\'{e}brique du Bois Marie 1962--64. [Algebraic Geometry Seminar of Bois Marie 1962--64]; A seminar directed by M. Demazure and A. Grothendieck with the collaboration of M. Artin, J.-E. Bertin, P. Gabriel, M. Raynaud and J-P. Serre; Revised and annotated edition of the 1970 French original},
  publisher={Soci\'{e}t\'{e} Math\'{e}matique de France, Paris},
  date={2011},
  pages={xxviii+610},
  label={SGA3-1},
}

%\bibselect{myrefs}
\end{biblist}
\end{bibdiv}

\end{document}